\newtheorem{theorem}{Theorem}[section]
\newtheorem{lemma}[theorem]{Lemma}
\newtheorem{proposition}[theorem]{Proposition}
\newtheorem{corollary}[theorem]{Corollary}
\theoremstyle{definition}
\newtheorem{definition}[theorem]{Definition}
\newtheorem{remark}[theorem]{Remark}
\newtheorem{agreement}{Agreement}
\newtheorem{assumption}[theorem]{Assumption}
\newtheorem{example}[theorem]{Example}
\newcommand{\IR}{\mathbb{R}}
\newcommand{\IN}{\mathbb{N}}
\newcommand{\IZ}{\mathbb{Z}}
\renewcommand{\S}{{\mathrm{S}}}
\newcommand{\loc}{\mathrm{loc}}
\newcommand{\WW}{\mathcal{W}}
\renewcommand{\L}{\mathrm{L}}
\newcommand{\C}{\mathrm{C}}
\newcommand{\W}{\mathrm{W}}
\newcommand{\B}{\mathrm{B}}
\newcommand{\Lip}{\mathrm{Lip}}
\renewcommand{\d}{\mathrm{d}}
\newcommand{\eps}{\varepsilon}
\newcommand{\abs}[1]{\lvert#1\rvert}
\newcommand{\cl}[1]{\overline{#1}}
\newcommand{\bd}{\partial}
\DeclareMathOperator{\supp}{supp}
\DeclareMathOperator{\dist}{d}
\DeclareMathOperator{\qhdist}{k}
\DeclareMathOperator{\diam}{diam}
\numberwithin{equation}{section}
\title[Extendability of functions with partially vanishing trace]{Extendability of functions with partially vanishing trace}
\author[S.~Bechtel]{Sebastian Bechtel}
\author[R.~M.~Brown]{Russell M.\@ Brown}
\author[R.~Haller-Dintelmann]{Robert Haller-Dintelmann}
\author[P.~Tolksdorf]{Patrick Tolksdorf}
\address{Fachbereich Mathematik, Technische Universit\"at Darmstadt, Schlossgartenstr. 7, 64289 Darmstadt, Germany}
\email{bechtel@mathematik.tu-darmstadt.de}
\email{haller@mathematik.tu-darmstadt.de}
\address{Department of Mathematics, University of Kentucky, Patterson office tower, Lexington, KY 40506-0027, United States of America}
\email{russell.brown@uky.edu}
\address{Institut f\"ur Mathematik, Johannes Gutenberg-Universit\"at Mainz, Staudingerweg 9, 55099 Mainz, Germany}
\email{tolksdorf@uni-mainz.de}
\thanks{The first-named and last-named authors were partially supported by \enquote{Studienstiftung des deutschen Volkes}, the second-named author was supported in part by a grant from the Simons Foundation \#422756, and the last-named author was partially supported by the project ANR INFAMIE (ANR-15-CE40-0011).}
\keywords{Sobolev extension operators, mixed boundary value problems, $(\eps , \delta)$-domains}
\date{\today}
\begin{document}
\begin{abstract}
Let $\Omega \subseteq \IR^d$ be open and $D\subseteq \bd\Omega$ be a closed part of its boundary. Under very mild assumptions on $\Omega$, we construct a bounded Sobolev extension operator for the Sobolev space $\W^{k , p}_D (\Omega)$, $1 \leq p < \infty$, which consists of all functions in $\W^{k , p} (\Omega)$ that vanish in a suitable sense on $D$. In contrast to earlier work, this construction is global and \emph{not} using a localization argument, which allows to work with a boundary regularity that is sharp at the interface dividing $D$ and $\bd \Omega \setminus D$. Moreover, we provide homogeneous and local estimates for the extension operator. Also, we treat the case of Lipschitz function spaces with a vanishing trace condition on $D$.
\end{abstract}

\maketitle
%%%%%%%%%%%%%%%%%%%%%%%%%%%%%%%%%%%%%%%%%%%%%%%%%%%%%%%%%%%%%%%%%%%%%%%%%%%%%%%%%%%%%%%%%%%%%%%%%%%%%%%%%%%%%%%%%%%%%%%%%%%%%%%%%%%%%%%%%%%%%%%%%%%%%%%%%%%%%%%%%%%%

%%%%%%%%%%%%%%%%%%%%%%%%%%%%%%%%%%%%%%%%%%%%%%%%%%%%%%%%%%%%%%%%%%%%%%%%%%%%%%%%%%%%%%%%%%%%%%%%%%%%%%%%%%%%%%%%%%%%%%%%%%%%%%%%%%%%%%%%%%%%%%%%%%%%%%%%%%%%%%%%%%%%
\section{Introduction}
\label{Sec: Introduction}

\noindent Sobolev spaces $\W^{k,p}_D(\Omega)$ that contain functions that only vanish on a portion $D$ of the boundary of some given open set $\Omega \subseteq \IR^d$ play an eminent role in the study of the mixed problem for second-order elliptic operators, see for example~\cite{Auscher_Badr_Haller-Dintelmann_Rehberg, Brewster_M_M_M, Egert, Egert_Haller-Dintelmann_Rehberg, Egert_Haller-Dintelmann_Tolksdorf, Egert_Tolksdorf, terElst_Haller-Dintelmann_Rehberg_Tolksdorf, terElst_Rehberg, Haller-Dintelmann_Knees_Rehberg, Taylor_Kim_Brown, Tolksdorf}. In the study of these spaces, an extension operator is a crucial tool.

Early contributions to the history of Sobolev extension operators include the works of Stein~\cite[pp. 180--192]{Stein} and Calder\'{o}n~\cite{Calderon} on Lipschitz domains as well as the seminal paper of Jones~\cite{Jones} on $(\eps,\delta)$-domains. The latter mentioned work was later refined by Chua~\cite{Chua} and Rogers~\cite{Rogers}. Even though all these constructions aim at the full Sobolev space $\W^{1,p}(\Omega)$, they restrict to bounded extension operators on the space with vanishing trace on $D$ and the extensions preserve the trace condition on $D$ if a mild regularity assumption is imposed, see~\cite[Lem.~3.4]{BE}.

All these constructions rely on regularity assumptions for the full boundary of the underlying set. However, if we consider a (relatively) interior point of $D$, then it is possible to extend the function by zero around that point, so that a relaxation on the boundary regularity is feasible. This effect was exploited using localization techniques by several authors, see Brewster, Mitrea, Mitrea, and Mitrea~\cite{Brewster_M_M_M} for a very mature incarnation of this idea using local $(\eps,\delta)$-charts, and~\cite{Haller-Dintelmann_Knees_Rehberg} for a version using Lipschitz manifolds. We will present both frameworks in detail in Section~\ref{Sec: Comparison with other results and examples} and show that they are included in our setup.

One drawback of this method is that the regularity assumption for the Neumann boundary part $\bd \Omega \setminus D$ has to hold not merely on this boundary portion but in a neighbourhood of it, which in particular contains interior points of $D$. This forbids all kinds of cusps that are arbitrarily close to the interface between the Dirichlet and the Neumann boundary part.

In this work, we will introduce an $(\eps,\delta)$-condition that is adapted to the Dirichlet condition on $D$. To be more precise, we also connect nearby points in $\Omega$ by $\eps$-cigars, but these are with respect to the Neumann boundary part $\bd\Omega \setminus D$ and \emph{not} the full boundary $\bd \Omega$, which means that $\eps$-cigars may \enquote{leave} the domain across the Dirichlet part $D$ to some extent that is measured by a \emph{quasi-hyperbolic distance} condition. This allows to have certain inward and outward cusps arbitrarily close to the interface between the Dirichlet and Neumann parts, see Example~\ref{Ex: Sector} for an illustrative example. However, there are types of cusps that are particularly nasty and which are excluded from our setting by the aforementioned quasihyperbolic distance condition. In Example~\ref{Ex: Interior boundary cusp in zero} we show that in these kinds of configurations there cannot exist a bounded extension operator, which emphasizes that it is indeed necessary that we have incorporated some further restriction in our setup. A detailed description of our geometric framework will be given in Assumption~\ref{Ass: Epsilon-delta assumption}.

Next, we give a precise definition of what we mean by the term \emph{extension operator}, followed by our main result.

\begin{definition}
\label{Def: Extension operator}
	Call a linear mapping $E$ defined on $\L^1_{\loc} (\Omega)$ into the measurable functions on $\IR^d$ an \emph{extension operator} if it satisfies $Ef(x) = f(x)$ for almost every $x\in \Omega$ and for all $f \in \L^1_{\loc} (\Omega)$.
\end{definition}

\begin{theorem}
\label{Thm: Extension theorem for large radius}
Let $\Omega \subseteq \IR^d$ be open and let $D \subseteq \partial \Omega$ be closed. Assume that $\Omega$ and $D$ are subject to Assumption~\ref{Ass: Epsilon-delta assumption}. Moreover, fix an integer $k \geq 0$. Then there exists an extension operator $E$ such that for all $1 \leq p < \infty$ and $0\leq \ell \leq k$ one has that $E$ restricts to a bounded mapping from $\W^{\ell,p}_D(\Omega)$ to $\W^{\ell,p}_D(\IR^d)$.
The operator norms of $E$ only depend on $d$, $p$, $K$, $k$, $\eps$, $\delta$, and $\lambda$.
\end{theorem}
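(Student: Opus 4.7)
The approach is to follow Jones' extension operator for $(\eps,\delta)$-domains, modified in two ways that reflect the setup of Assumption~\ref{Ass: Epsilon-delta assumption}: the Whitney decomposition is taken of $\IR^d\setminus N$ with $N := \bd\Omega\setminus D$, so that it is blind to the Dirichlet part, and the reflection of complement cubes uses cigars with respect to $N$, which are allowed to cross $D$ in a quantitatively controlled way through the quasi-hyperbolic parameter $\lambda$. Concretely, let $\WW$ be a Whitney decomposition of $\IR^d\setminus N$ and split $\WW = \WW_\Omega \sqcup \WW_c$ depending on whether a cube sits inside $\Omega$ or not. Discard from $\WW_c$ every cube of side length exceeding a threshold $r_0 \sim \eps\delta$; such cubes are uniformly far from $\Omega$ and $Ef$ will vanish there. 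For the remaining cubes $Q \in \WW_c$ the cigar condition yields a reflected cube $Q^\# \in \WW_\Omega$ of comparable size and at comparable distance, and neighbouring cubes have reflections joined by a chain in $\WW_\Omega$ whose length is controlled by $\eps$, $\delta$, $\lambda$. Choose a smooth partition of unity $\{\varphi_Q\}$ subordinate to a mild dilation of the retained complement cubes and set
\begin{equation*}
  Ef(x) :=
    \begin{cases}
      f(x), & x\in\Omega, \\[2pt]
      \displaystyle\sum_{Q} \varphi_Q(x)\,(P_{Q^\#} f)(x), & x\notin\cl{\Omega},
    \end{cases}
\end{equation*}
where $P_{Q^\#}f$ denotes the $\L^2(Q^\#)$-projection of $f$ onto polynomials of degree at most $k-1$ (constants if $k=0$). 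Choosing the polynomial degree in terms of the top order $k$ is what allows one operator $E$ to serve all $\ell \in \{0,\dots,k\}$ simultaneously.

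Boundedness from $\W^{\ell,p}(\Omega)$ to $\W^{\ell,p}(\IR^d)$ follows from the usual polynomial-approximation machinery. Derivatives of $Ef$ of order $\ell$ on $\IR^d\setminus\cl{\Omega}$ are expanded by Leibniz, and since the $\varphi_Q$ form a partition of unity, the top-order terms can be rewritten as sums of differences $P_{Q_1^\#}f - P_{Q_2^\#}f$ over neighbouring pairs $(Q_1,Q_2)$ in $\WW_c$; these polynomial differences are estimated through the associated chain of reflected cubes by iterated Poincar\'e inequalities, producing bounds in terms of $\|\nabla^\ell f\|_{\L^p}$ on a subset of $\Omega$ of bounded overlap. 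Summation then gives the global estimate, with constants tracking explicitly the parameters $d,p,K,k,\eps,\delta,\lambda$. Lower-order derivatives and the pure $\L^p$-estimate ($\ell=0$) are handled by the same scheme specialised to lower polynomial degrees.

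To see that $Ef$ inherits the vanishing trace condition on $D$, we argue by density. If $f \in \C^\infty(\cl{\Omega})$ vanishes in an open $\IR^d$-neighbourhood $U$ of $D$, then by the Whitney property every cube $Q \in \WW_c$ near $D$ satisfies $\ell(Q) \lesssim \dist(Q,D)$, so its reflection $Q^\#$ lies in $U$ as soon as $Q$ is close enough to $D$; consequently $P_{Q^\#}f = 0$ and $Ef$ vanishes in a whole neighbourhood of $D$, hence belongs to $\W^{\ell,p}_D(\IR^d)$. Since such $f$ are dense in $\W^{\ell,p}_D(\Omega)$, the boundedness of $E$ established above propagates the vanishing trace to all of $\W^{\ell,p}_D(\Omega)$. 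The main obstacle is to verify, quantitatively, that the reflection map and the chain-of-cubes construction actually survive the weakening from cigars in $\IR^d\setminus\bd\Omega$ to cigars in $\IR^d\setminus N$: one must show that the quasi-hyperbolic control encoded in $\lambda$ keeps Poincar\'e constants uniformly bounded even for chains that cross $D$, and that reflections of neighbouring complement cubes remain geometrically close, so that the bound for derivatives of $Ef$ does not degenerate at the interface $D \cap \cl{(\bd\Omega\setminus D)}$.
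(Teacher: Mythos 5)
The broad strategy you outline — use cigars with respect to $\Gamma$ rather than $\partial\Omega$, reflect exterior cubes into $\Omega$, and let the quasi\-hyperbolic bound control the chains even when they cross $D$ — is indeed the paper's strategy, and your final paragraph correctly identifies the crux of the matter. But the concrete setup you propose has a structural gap, and two further steps are assumed without justification.

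\textbf{The single Whitney decomposition does not work as you describe.} You take $\WW = \WW(\IR^d\setminus\Gamma)$ and split it into $\WW_\Omega$ and $\WW_c$ by whether a cube sits in $\Omega$. But a cube $Q \in \WW(\IR^d\setminus\overline\Gamma)$ only controls its distance to $\Gamma$: near a point of $D$ far from $\Gamma$ such $Q$ is \emph{large} and may straddle $\partial\Omega$, meeting both $\Omega$ and $\IR^d\setminus\overline\Omega$. Then neither $\WW_\Omega$ nor $\WW_c$ cleanly tiles the exterior of $\overline\Omega$, and $Ef$ is not defined on part of a straddling cube. Moreover the claim that discarding $Q\in\WW_c$ with $\ell(Q)\gtrsim\eps\delta$ removes cubes ``uniformly far from $\Omega$'' is false: a cube in $\WW(\IR^d\setminus\overline\Gamma)$ of large sidelength can touch $D\subseteq\partial\Omega$ and hence have distance $0$ to $\Omega$. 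The paper avoids both problems by using \emph{two} decompositions: interior cubes come from $\WW(\overline\Gamma)$, but exterior cubes come from the ordinary Whitney decomposition $\WW(\overline\Omega)$, so they really do tile $\IR^d\setminus\overline\Omega$ with size comparable to $\dist(\cdot,\partial\Omega)$, and the selection is made by the condition $\dist(Q,\Gamma) < B\,\dist(Q,D)$ rather than by a size threshold alone. The extra class of cubes that are close to $D$ but adjacent to selected cubes (the paper's $Q\in\WW(\overline\Omega)\setminus\WW_e$ intersecting $\WW_e$) is then handled by the ``escaping'' Poincar\'e chains of Lemma~\ref{Lem: Building Poincare chains}; your scheme has no analogue of this case analysis, and it is precisely where the interface $D\cap\overline{\Gamma}$ produces difficulties.

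\textbf{Two further steps are asserted without proof.} First, you invoke density in $\W^{\ell,p}_D(\Omega)$ of functions in $\C^\infty(\overline\Omega)$ vanishing on a neighbourhood of $D$. By Definition~\ref{Def: WkpDOmega} the space is the closure of $\C^\infty_D(\Omega)\cap\W^{\ell,p}(\Omega)$, whose elements need not be smooth up to $\partial\Omega$. That the better class is dense is Proposition~\ref{Prop: Approximation}, which in the paper occupies all of Section~\ref{Sec: Approximation smooth functions} and itself uses the extension machinery; you cannot simply assume it. Second, showing that $Ef$ is actually weakly differentiable across $\partial\Omega$ is not automatic when the extension is defined piecewise; the paper establishes this in Proposition~\ref{Prop: Lipschitz continuous  representative} by exhibiting a Lipschitz representative of $\partial^\alpha Ef$ for $|\alpha|\le k-1$, and your proposal never addresses the matching condition on $\partial\Omega$. (A minor point: you repeatedly attribute the quasi\-hyperbolic bound to $\lambda$, whereas in Assumption~\ref{Ass: Epsilon-delta assumption} that bound is carried by $K$; $\lambda$ governs the diameter condition~\eqref{Eq: Diameter Condition}.)
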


In addition, we will present some further improvements for the first-order case in Theorem~\ref{Thm: Lipschitz extension} and Corollary~\ref{Cor: Extension operator} which include the case of Lipschitz spaces and an enlargement of admissible geometries, as well as \emph{local} and \emph{homogeneous} estimates in Theorem~\ref{Thm: Local homogeneous estimates}.

\subsection*{Outline of the article}

First of all, we will present our geometric setting in Section~\ref{Sec: Function Spaces and Geometry} and will also give precise definitions for the relevant function spaces. A comparison with existing results and several examples and counterexamples are given in Section~\ref{Sec: Comparison with other results and examples}.

Then, we dive into the construction of the extension operator. Sections~\ref{Sec: Whitney decompositions} and~\ref{Sec: Cubes and chains} are all about cubes. In there, we will define collections of exterior and interior cubes coming from two different Whitney decompositions, and will explain how an exterior cube can be reflected \enquote{at the Neumann boundary} to obtain an associated interior cube. In contrast to Jones', not all small cubes in the Whitney decomposition of $\overline{\Omega}$ are exterior cubes. The treatment of Whitney cubes which are \enquote{almost} exterior cubes are the central deviation from Jones construction and are thus the heart of the matter in this article. These two sections are highly technical.

Eventually, we come to the actual crafting of the extension operator for Theorem~\ref{Thm: Extension theorem for large radius} in Section~\ref{Sec: The extension operator}. This section also contains results on (adapted) polynomials which are needed to define the extension operator via \enquote{reflection}. The proof of Theorem~\ref{Thm: Extension theorem for large radius} will be completed in Section~\ref{Sec: Conclusion of the proof}. Before that, we introduce an approximation scheme that yields more regular test functions for $\W^{k,p}_D(\Omega)$ in Section~\ref{Sec: Approximation smooth functions}. This additional regularity is crucial for Proposition~\ref{Prop: Lipschitz continuous  representative}.

Finally, we present some additional first-order theory in Section~\ref{Sec: First order}, followed by some short observations on locality and homogeneity in Section~\ref{Sec: Homogeneous estimates} which build on an observation made in Remark~\ref{Rem: Homogeneous estimates}.

\subsection*{Acknowledgements}

We would like to thank Juha Lehrb\"ack for drawing our attention towards the notion of quasihyperbolic distances.

\subsection*{Notation}

\noindent Throughout this article, the dimension $d \geq 2$ of the underlying
Euclidean space $\IR^d$ is fixed. Open balls around $x\in \IR^d$ of radius $r>0$ are denoted by $\B(x,r)$ and for the corresponding closed ball we write $\overline{\B}(x,r)$. The closure and
complement of a set $A \subseteq \IR^d$ are denoted by $\overline{A}$ and $A^c$. The Euclidean norm of a complex vector as well
as the Lebesgue measure of a measurable set in $\IR^d$ are denoted by
$\abs{~\cdot~}$.
If not otherwise mentioned, cubes are closed and axis-parallel.
We write $\mathcal{P}_m$ for the set of polynomials on $\IR^d$ of degree at most $m$.
The vector $\nabla^m f \coloneqq (\partial^\beta f)_{|\beta|=m}$ is introduced for an $m$-times (weakly) differentiable function. The letters $\alpha$ and $\beta$ are always supposed the mean multi-indices, possibly subject to further constraints.
The distance of two sets
$A , B \subseteq \IR^d$ is denoted by $\dist(A , B)$ and in the case $A = \{x\}$
the distance is abbreviated by $\dist(x , B)$. For $A\subseteq \IR^d$ and $t>0$ put $N_t(A) \coloneqq \{ x\in \IR^d \colon \dist(x,A) < t \}$. The diameter of an arbitrary subset of $\IR^d$ is denoted by
$\diam(\cdot)$. Finally, we follow the standard conventions that the infimum over the empty set is $\infty$ and that $1/\infty = 0$.

%%%%%%%%%%%%%%%%%%%%%%%%%%%%%%%%%%%%%%%%%%%%%%%%%%%%%%%%%%%%%%%%%%%%%%%%%%%%%%%%%%%%%%%%%%%%%%%%%%%%%%%%%%%%%%%%%%%%%%%%%%%%%%%%%%%%%%%%%%%%%%%%%%%%%%%%%%%%%%%%%%%%
\section{Geometry and Function Spaces}
\label{Sec: Function Spaces and Geometry}

\subsection{Geometry}
\label{Subsec: The geometry}

Let $\Xi \subseteq \IR^d$ be open. For two points $x , y \in \Xi$ their \textit{quasihyperbolic distance}, first introduced by Gehring and Palka~\cite{Gehring_Palka}, is given by
\begin{align*}
 \qhdist_{\Xi}(x , y) \coloneqq \inf_{\gamma} \int_{\gamma} \frac{1}{\dist(z , \partial \Xi)} \; \lvert \d z \rvert,
\end{align*}
where the infimum is taken over all rectifiable curves $\gamma$ in $\Xi$ joining $x$ with $y$. Notice that its value might be $\infty$. This is the case if there is no path connecting $x$ with $y$ in $\Xi$. The function $\qhdist_{\Xi}$ is called the \textit{quasihyperbolic metric}. If $\Xi^{\prime} \subseteq \Xi$ define
\begin{align*}
 \qhdist_{\Xi} (x , \Xi^{\prime}) \coloneqq \inf\{\qhdist_{\Xi} (x , y) : y \in \Xi^{\prime}\} \mathrlap{\qquad (x \in \Xi).}
\end{align*}
To construct the Sobolev extension operator in Theorem~\ref{Thm: Extension theorem for large radius}, we will rely on the following geometric assumption.

\begin{assumption}
\label{Ass: Epsilon-delta assumption}
Let $\Omega \subseteq \IR^d$ be open, $D \subseteq \partial \Omega$ be closed, and define $\Gamma \coloneqq \partial \Omega \setminus D$.
%%RMB
We assume that there exist $\eps \in (0,1]$, $\delta\in (0,\infty]$ and $K > 0$ 
%TODO habe hier explizit reingeschrieben dass delta=infty zulaessig ist
such that for all points $x , y \in \Omega$ with $\abs{x - y} < \delta$ there exists a rectifiable curve $\gamma$ that joins $x$ and $y$ and takes values in $\Xi \coloneqq \IR^d \setminus \overline{\Gamma}$ and satisfies
\begin{align}
 \mathrm{length}(\gamma) &\leq \eps^{-1} \abs{x - y}, \tag{LC}\label{Eq: Length condition}\\
 \dist(z , \Gamma) &\geq \eps \frac{\abs{x - z} \abs{y - z}}{\abs{x - y}} \mathrlap{\qquad(z \in \gamma),} \tag{CC}\label{Eq: Carrot condition} \\
 \qhdist_{\Xi}(z , \Omega) &\leq K \mathrlap{\qquad\qquad\qquad(z \in \gamma).} \tag{QHD}\label{Eq: Quasihyperbolic distance condition}
\end{align}
Furthermore, assume that there exists $\lambda > 0$ such that for each connected component $\Omega_m$ of $\Omega$ holds
\begin{align} \label{Eq: Diameter Condition}
	\partial \Omega_m \cap \Gamma \neq \emptyset \qquad \Longrightarrow \qquad \diam(\Omega_m)\geq \lambda \delta. \tag{DC}
\end{align}
\end{assumption}

\begin{remark}
\label{Rem: Properties QHD}
Let $(\Xi_m)_m$ denote the connected components of $\Xi$. From $\dist(z , \partial \Xi_m) = \dist(z , \partial \Xi)$ for $z \in \Xi_m$ follows directly that $\qhdist_{\Xi}(x, y) = \qhdist_{\Xi_m}(x, y)$ holds for all $x , y \in \Xi_m$. Note that $\partial \Xi = \cl{\Gamma}$ since $\cl{\Gamma} \subseteq \partial \Omega$ contains no interior points. Moreover, for $x \in \Xi_m$ and $y \in \Xi_n$ with $m \neq n$ one has $\qhdist_{\Xi}(x , y) = \infty$ since there is no connecting path between those points. Finally, $\qhdist_{\IR^d}(x , y) = 0$ holds for all $x , y \in \IR^d$ by the convention $1/\infty=0$.
\end{remark}

\begin{remark}
\label{Rem: For geometry}
\begin{enumerate}
\item Consider the pure Dirichlet case $D = \partial \Omega$. Then the curves are allowed to take values in all of $\IR^d$. In particular, we may connect points by a straight line, so that~\eqref{Eq: Length condition} is clearly satisfied. Condition~\eqref{Eq: Carrot condition} is void and also~\eqref{Eq: Quasihyperbolic distance condition} is trivially fulfilled, see Remark~\ref{Rem: Properties QHD}. Moreover, the diameter condition is always fulfilled since there are no connected components that intersect $\Gamma$. Consequently, if $D = \partial \Omega$, then Assumption~\ref{Ass: Epsilon-delta assumption} is fulfilled for any open set $\Omega$. 
\item Consider the pure Neumann case $D=\emptyset$ and fix $\eps,\delta$. The curve $\gamma$ can only connect points in the same connected component of $\Omega$. Thus, $\Omega$ is the union of at most countably many $(\eps , \delta)$-domains, whose pairwise distance is at least $\delta$ and whose diameters stay uniformly away from zero. In particular, if $\delta=\infty$, then $\Omega$ is connected and unbounded.
\item A similar condition on the diameter of connected components was introduced in~\cite[Sec.~2]{Brewster_M_M_M} in order to transfer Jones' construction of the Sobolev extension operator in~\cite{Jones} to disconnected sets. In the situation of Assumption~\ref{Ass: Epsilon-delta assumption} the positivity of the radius \emph{only} ensures that the connected components of $\Omega$ whose boundaries have a common point with $\Gamma$ do not become arbitrarily small. This is because our construction is global and not using a localization procedure.
We will present a thorough comparison with the geometry from~\cite{Brewster_M_M_M} in Section~\ref{Sec: Comparison with other results and examples}.
%If $\Omega$ has only finitely many such connected components, this condition is void.
\end{enumerate}
\end{remark}

\subsection{Function spaces}
\label{Subsec: Sobolev spaces}

Write $\W^{k,p}(\Omega)$ for the vector space of all $\L^p(\Omega)$ functions that have weak derivatives up to the non-negative integer order $k$ and which are again in $\L^p(\Omega)$. Equip $\W^{k,p}(\Omega)$ with the usual norm. Note that by Rademacher's theorem $\W^{1,\infty}(\IR^d)$ coincides with the space $\Lip(\IR^d)$ of Lipschitz continuous functions. A particular consequence is that (locally) Lipschitz continuous functions are weakly differentiable. We will exploit this fact in Section~\ref{Sec: Conclusion of the proof}. Note that on domains a mild geometric assumption is needed to ensure that $\W^{1,\infty}(\Omega)$ coincides with $\Lip(\Omega)$. This can be observed by considering $\Omega = \B(0,1) \setminus [0,1)$ as a counterexample.

\begin{definition}
\label{Def: CDinftyOmega}
	Let $\Omega \subseteq \IR^d$ be open and let $D \subseteq \overline{\Omega}$ be closed. Define the space of smooth functions on $\Omega$ which vanish in a neighborhood of $D$ by
\begin{align*}
	\C_D^\infty(\Omega) \coloneqq \bigl\{ f\in \C^\infty(\Omega) \colon \dist(\supp(f),D) > 0 \bigr\}.
\end{align*}
\end{definition}
Using this space of test functions, we define Sobolev functions vanishing on $D$. Note that we exclude the endpoint case $p=\infty$ in that definition. However, in the case $k=1$, we will work with a related space  in Section~\ref{Sec: First order}.
\begin{definition}
\label{Def: WkpDOmega}
	Let $\Omega \subseteq \IR^d$ be open and let $D \subseteq \overline{\Omega}$ be closed. For a non-negative integer $k$ and $p \in [1 , \infty)$  define the Sobolev space $\W^{k , p}_D
	(\Omega)$ as the closure of $\C_D^{\infty} (\Omega) \cap \W^{k,p}(\Omega)$ in $\W^{k , p}(\Omega)$.
\end{definition}
In Section~\ref{Sec: Approximation smooth functions} we will see that even the space $\C_D^\infty(\IR^d) \cap \W^{k,p}(\Omega)$ is dense in $\W^{k,p}_D(\Omega)$ as long as we assume the geometry from Assumption~\ref{Ass: Epsilon-delta assumption}; In fact, we will approximate by compactly supported $\C_D^\infty(\IR^d)$ functions, which are therefore in particular in $\W^{k,p}(\Omega)$.

\section{Comparison with other results and examples}
\label{Sec: Comparison with other results and examples}
\noindent This section is devoted to compare our result with existing results. The most general geometric setup to construct a Sobolev extension operator for the spaces $\W^{1 , p}_D (\Omega)$ was considered in the work of Brewster, Mitrea, Mitrea, and Mitrea~\cite[Thm.~1.3, Def.~3.4]{Brewster_M_M_M} and reads as follows. 

\begin{assumption}
\label{Ass: Locally eps-delta near Gamma}
Let $\Omega \subseteq \IR^d$ be an open, non-empty, and proper subset of $\IR^d$, $D \subseteq \partial \Omega$ be closed, and let $\Gamma \coloneqq \partial \Omega \setminus D$. Let $\eps , \delta > 0$ be fixed. Assume there exist $r_0 > 0$ and an at most countable family $\{ O_j \}_j$ of open subsets of $\IR^d$ satisfying
\begin{enumerate}
 \item $\{ O_j \}_j$ is locally finite and has bounded overlap,
 \item \label{Item: Second BMMM assumption} for all $j$ there exists an $(\eps , \delta)$-domain $U_j \subseteq \IR^d$ with connected components all of diameter at least $r_0$ and satisfying $O_j \cap \Omega = O_j \cap U_j$,
 \item \label{Item: Third BMMM assumption}there exists $r \in (0 , \infty]$ such that for all $x \in \Gamma$ there exists $j$ for which $\B(x , r) \subseteq O_j$.
\end{enumerate}
\end{assumption}

Here, an open set $U_j$ is called an $(\eps , \delta)$-domain if there exist $\eps , \delta > 0$ such that for all $x , y \in U_j$ there exists a rectifiable curve $\gamma$ that joins $x$ and $y$, takes its values in $U_j$, and satisfies~\eqref{Eq: Length condition} and~\eqref{Eq: Carrot condition} with respect to $\partial U_j$ instead of $\Gamma$. Also note that~\eqref{Eq: Length condition} enforces $\eps \in (0,1]$.

\begin{proposition}
Assumption~\ref{Ass: Locally eps-delta near Gamma} implies Assumption~\ref{Ass: Epsilon-delta assumption}.
\end{proposition}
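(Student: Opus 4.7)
The plan is to verify each condition in Assumption~\ref{Ass: Epsilon-delta assumption} for $x,y\in\Omega$ with $|x-y|<\delta'$, where $\delta'$ will be chosen small enough in terms of $\eps$, $\delta$, $r$, and $r_0$ from Assumption~\ref{Ass: Locally eps-delta near Gamma}. I would split into two cases according to how close $x$ and $y$ are to $\overline{\Gamma}$, and handle the diameter condition separately.

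\textbf{Interior case.} If $\dist(x,\overline{\Gamma})$ and $\dist(y,\overline{\Gamma})$ are both at least $M|x-y|$ for some fixed $M\geq 2$, I would take $\gamma$ to be the straight line segment. The length bound~\eqref{Eq: Length condition} is immediate, and the triangle inequality gives $\dist(z,\overline{\Gamma})\geq(M-1/2)|x-y|$ for $z\in\gamma$, which dominates $\eps|x-z||y-z|/|x-y|\leq\eps|x-y|/4$ and yields~\eqref{Eq: Carrot condition}. For~\eqref{Eq: Quasihyperbolic distance condition} I would integrate $1/\dist(\,\cdot\,,\overline{\Gamma})$ along the segment from $z$ to $x\in\Omega$; the distance to $\overline{\Gamma}$ stays above $(M-1)|x-y|$ throughout, giving $\qhdist_\Xi(z,\Omega)\leq\qhdist_\Xi(z,x)\leq\ln(M/(M-1))$. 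This also covers the edge case $\overline{\Gamma}=\emptyset$ trivially.

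\textbf{Boundary case.} Otherwise, say $\dist(x,\overline{\Gamma})<M|x-y|$. Since $\Gamma$ is dense in $\overline{\Gamma}$, pick $x_0\in\Gamma$ with $|x-x_0|<2M|x-y|$, and use Assumption~\ref{Ass: Locally eps-delta near Gamma}(3) to obtain some $O_j\supseteq\B(x_0,r)$. For $|x-y|$ small enough relative to $r$, both $x$ and $y$ lie in $\B(x_0,r/4)\cap\Omega=\B(x_0,r/4)\cap U_j$, so the $(\eps,\delta)$-property of $U_j$ supplies a curve $\gamma\subseteq U_j$ joining $x$ to $y$ with $\mathrm{length}(\gamma)\leq|x-y|/\eps$, satisfying the carrot condition with respect to $\partial U_j$. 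A further smallness of $|x-y|$ forces $\gamma\subseteq\B(x_0,r)\subseteq O_j$, and combined with $U_j\cap O_j=\Omega\cap O_j$ this gives $\gamma\subseteq\Omega\subseteq\Xi$; hence~\eqref{Eq: Quasihyperbolic distance condition} holds trivially with $K=0$. The length condition is inherited verbatim. For~\eqref{Eq: Carrot condition} I would split $\overline{\Gamma}=(\overline{\Gamma}\cap O_j)\cup(\overline{\Gamma}\setminus O_j)$: since boundaries of sets agreeing on $O_j$ agree on $O_j$, one has $\overline{\Gamma}\cap O_j\subseteq\partial U_j$, so the $(\eps,\delta)$-estimate inside $U_j$ controls the distance to the first piece; meanwhile $\gamma$ sits well inside $\B(x_0,r)$, so its distance to $\overline{\Gamma}\setminus O_j\subseteq O_j^c$ is at least $r/2$, which dominates $\eps|x-z||y-z|/|x-y|\leq|x-y|/\eps$ once $|x-y|\leq r\eps/2$.

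\textbf{Diameter condition and main obstacle.} For~\eqref{Eq: Diameter Condition}, fix a component $\Omega_m$ with $x_0\in\partial\Omega_m\cap\Gamma$, choose $j$ with $\B(x_0,r)\subseteq O_j$, and pick $x_1\in\Omega_m\cap\B(x_0,r/4)$. The component $U_j^{(k)}$ of $U_j$ containing $x_1$ has diameter at least $r_0$; either $U_j^{(k)}\subseteq\B(x_0,r)$, so that $U_j^{(k)}\subseteq O_j\cap U_j=O_j\cap\Omega\subseteq\Omega_m$ and $\diam(\Omega_m)\geq r_0$; or any path in $U_j^{(k)}$ from $x_1$ to a point farther than $r$ from $x_0$ must cross $\partial\B(x_0,r)$ while remaining in $O_j\cap\Omega\subseteq\Omega_m$, giving $\diam(\Omega_m)\geq 3r/4$. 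The parameters $\eps',\delta',K,\lambda$ are then assembled from the constraints above. The main obstacle is the bookkeeping in the boundary case: the threshold $\delta'$ must be small enough that the curve supplied by $U_j$ both stays inside $\B(x_0,r)$ (to invoke the chart identity and to neutralize the portion of $\overline{\Gamma}$ outside $O_j$) and fits under the $(\eps,\delta)$-property of $U_j$. Once this balancing is done, the carrot condition is the only nontrivial estimate that remains.
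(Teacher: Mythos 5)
Your proposal is correct and follows the same overall strategy as the paper's proof: a two-case split (near/far from $\Gamma$), straight lines in the far case with a short segment to $\Omega$ controlling the quasihyperbolic distance, the $(\eps,\delta)$-chart $U_j$ in the near case with the path shown to lie in $\Omega \cap O_j$ so that $K=0$, and a localization-inside-$O_j$ argument for the diameter condition. The only cosmetic differences are that your case split is relative to $\lvert x-y\rvert$ (the paper uses a fixed threshold $\kappa = r/8$), and in the near case you split $\overline{\Gamma}$ into the parts inside and outside $O_j$ and bound the distance to each separately, whereas the paper instead shows $\dist(z,\partial\Omega)=\dist(z,\partial U_j)$ directly for $z$ near $x_0$; both routes give the carrot condition and neither saves appreciable work over the other.
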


\begin{proof}
Let $\eps , \delta , r$, and $r_0$ be the quantities from Assumption~\ref{Ass: Locally eps-delta near Gamma}. We have to show the quantitative connectedness condition contained in~\eqref{Eq: Length condition} to~\eqref{Eq: Quasihyperbolic distance condition} as well as the diameter condition~\eqref{Eq: Diameter Condition} for connected components touching $\Gamma$. For the rest of the proof, references to~\eqref{Item: Second BMMM assumption} and~\eqref{Item: Third BMMM assumption} refer to the respective items in Assumption~\ref{Ass: Locally eps-delta near Gamma}.

For the first task, let $x,y\in \Omega$ and define $\kappa \coloneqq r / 8$ and $V_\kappa \coloneqq \{ x \in \IR^d \colon \dist(x , \Gamma) \leq \kappa \}$.
%Recall that $\eps \in (0 , 1]$ by Remark~\ref{Rem: For geometry}~\eqref{Enum: Size of epsilon} so that in particular $\delta^{\prime} < r / 8$.
We proceed by distinguishing two cases.

\emph{Case 1}: $x , y \in V_{\kappa}$ with $|x-y| < \min(\delta , \eps r / 8)$. Fix $x_0 \in \overline{\Gamma} \subseteq \bd \Omega$ such that $\dist(x , \Gamma) = |x-x_0|$. Then $x,y \in \B(x_0 , r/4)$ and by~\eqref{Item: Third BMMM assumption} we get $\B(x_0 , r) \subseteq O_j$ for some $j$. Using this and~\eqref{Item: Second BMMM assumption} we furthermore get $x,y \in \B(x_0 , r) \cap \Omega = \B(x_0 , r) \cap U_j$. Notice that this gives in particular $x_0 \in \partial U_j$. Next, let $\gamma$ denote the $(\eps , \delta)$-path subject to~\eqref{Eq: Length condition} and~\eqref{Eq: Carrot condition} that connects $x$ and $y$ in $U_j$. For $z \in \gamma$ we have by~\eqref{Eq: Length condition}
\begin{align*}
 \lvert x_0 - z \rvert \leq \lvert x_0 - x \rvert + \lvert x - z \rvert \leq \kappa + \mathrm{length} (\gamma) \leq \frac{r}{8} + \frac{|x-y|}{\eps} < \frac{r}{4}.
\end{align*}
Thus, $\gamma$ takes its values in $\B(x_0 , r / 4) \cap U_j = \B(x_0 , r / 4) \cap \Omega \subseteq \Xi$. This shows that $\gamma$ is an admissible path for Assumption~\ref{Ass: Epsilon-delta assumption}, and of course~\eqref{Eq: Length condition} stays valid. We also conclude
\begin{align*}
\dist(z, \bd \Omega)
= \dist\bigl(z, \B(x_0, r/2) \cap \bd\Omega \bigr)
= \dist\bigl(z, \B(x_0, r/2) \cap \bd U_j \bigr)
= \dist(z , \partial U_j),
\end{align*}
and thus, taking~\eqref{Eq: Carrot condition} with respect to $\bd U_j$ into account, we derive
\begin{align*}
 \dist(z , \Gamma) \geq \dist(z , \partial \Omega) = \dist(z , \partial U_j) \geq \eps \frac{\lvert x - y \rvert \lvert y - z \rvert}{\lvert x - y \rvert},
\end{align*}
which is~\eqref{Eq: Carrot condition} with respect to $\Gamma$. Finally, since $\gamma$ takes its values in $\Omega$, it satisfies~\eqref{Eq: Quasihyperbolic distance condition} with $K = 0$.

\emph{Case 2}: $x \in V_\kappa^c$ and $|x-y| < \kappa/2$ (the case with $y\in V_\kappa^c$ works symmetrically). Write $\gamma$ for the straight line that connects $x$ with $y$ in $\IR^d$. Then~\eqref{Eq: Length condition} is clearly fulfilled and for~\eqref{Eq: Carrot condition} we estimate with $z \in \gamma$ using $\max(|x-z|,|y-z|) \leq |x-y| \leq \kappa/2$ that
\begin{align*}
 \eps \frac{|x - z| |y - z|}{|x - y|} < \frac{\kappa}{2} < \dist(x, \Gamma) - \mathrm{length}(\gamma) \leq \dist(z, \Gamma).
\end{align*}
In particular, $\gamma$ takes its values in $\Xi$.

We continue with the second task. To control the quasihyperbolic distance of a point $z\in \gamma$ to $\Omega$ with respect to $\Xi$, we estimate $\qhdist_\Xi(z,x)$ using the line segment that connects $x\in \Omega$ with $z$. Then the integrand in the definition of the quasihyperbolic distance is bounded by $2/\kappa$ and the length of the path is at most $\kappa/2$. Hence, $\qhdist_\Xi(z,\Omega) \leq 1$, which gives~\eqref{Eq: Quasihyperbolic distance condition}.

Therefore, conditions~\eqref{Eq: Length condition} to~\eqref{Eq: Quasihyperbolic distance condition} are satisfied for all $x,y\in \Omega$ as long as $|x-y| < \min(\delta, \eps r/8, \kappa/2)=:\delta'$, which concludes the first part of this proof.

To show the diameter condition, let $\Omega_m$ be a connected component of $\Omega$ with $\bd \Omega_m \cap \Gamma \neq \emptyset$. Fix some $x_0$ in this intersection. We show $\diam(\Omega_m) \geq \min(r/2,r_0)$. This implies in particular that $\diam(\Omega_m) \geq \lambda \delta'$ for some suitable $\lambda$ since $\delta'$ is finite. Suppose $\diam(\Omega_m) < r/2$. Then $\Omega_m \subseteq \B(x_0,r/2)$. According to~\eqref{Item: Third BMMM assumption}, we have $\B(x_0,r) \subseteq O_j$ for some $j$. Taking also~\eqref{Item: Second BMMM assumption} into account, we get on the one hand that $\Omega_m \subseteq U_j$, and on the other hand that points in $U_j$ close to $\Omega_m$ belong to $\Omega$. We draw two conclusions: First, $\Omega_m$ is an open and connected subset of $U_j$. Second, if there were a continuous path in $U_j$ connecting a point from $U_j \setminus \Omega_m$ with one in $\Omega_m$, that path would eventually run in $\Omega$ and therefore $\Omega_m$ wouldn't be maximally connected in $\Omega$, leading to a contradiction. So in total, $\Omega_m$ is a connected component of $U_j$ and hence $\diam(\Omega_m) \geq r_0$.
\end{proof}

A common geometric setup which is used in many works dealing with mixed Dirichlet/Neumann boundary conditions, see for example~\cite{Auscher_Badr_Haller-Dintelmann_Rehberg, Egert, Egert_Haller-Dintelmann_Rehberg, Egert_Haller-Dintelmann_Tolksdorf, terElst_Haller-Dintelmann_Rehberg_Tolksdorf, terElst_Rehberg, Taylor_Kim_Brown, Tolksdorf}, requires Lipschitz charts around points on the closure of $\Gamma$ and is presented in the following assumption.

\begin{assumption}
\label{Ass: Lipschitz}
Let $\Omega \subseteq \IR^d$ be a bounded open set and $D \subseteq \partial \Omega$ be closed. Assume that around each point $x \in \overline{\Gamma}$ there exists a neighborhood $U_x$ of $x$ and a bi-Lipschitz homeomorphism $\Phi_x : U_x \to (-1 , 1)^d$ such that $\Phi_x (x) = 0$, $\Phi_x (U_x \cap \Omega) = (-1 , 1)^{d - 1} \times (0 , 1)$, and $\Phi_x (U_x \cap \bd\Omega) = (-1 , 1)^{d - 1} \times \{ 0 \}$.
\end{assumption}

\begin{proposition}
\label{Prop: Lipschitz}
Assumption~\ref{Ass: Lipschitz} implies Assumption~\ref{Ass: Locally eps-delta near Gamma}.
\end{proposition}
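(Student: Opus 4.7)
The plan is to build the family $\{O_j\}_j$ from finitely many shrunken chart neighborhoods. Since $\Omega$ is bounded, $\cl\Gamma \subseteq \bd\Omega$ is compact. For each $x \in \cl\Gamma$ I would introduce the smaller neighborhood $V_x \coloneqq \Phi_x^{-1}((-1/2, 1/2)^d)$, which is open and still contains $x$. From the resulting open cover $\{V_x\}_{x\in\cl\Gamma}$ of $\cl\Gamma$ I extract a finite subcover $V_{x_1}, \ldots, V_{x_N}$ and apply the Lebesgue number lemma to obtain a uniform $r > 0$ such that for every $y \in \cl\Gamma$, in particular every $y \in \Gamma$, some index $i$ satisfies $\B(y, r) \subseteq V_{x_i}$. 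Setting $O_i \coloneqq V_{x_i}$ makes condition~(1) of Assumption~\ref{Ass: Locally eps-delta near Gamma} trivial (the family is finite) and condition~(3) immediate.

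For condition~(2), I would set $U_i \coloneqq \Phi_{x_i}^{-1}(Q)$ with $Q \coloneqq (-3/4, 3/4)^{d-1} \times (0, 3/4)$. By construction $\overline{U_i}$ is compactly contained in $U_{x_i}$, so $\bd U_i = \Phi_{x_i}^{-1}(\bd Q)$ and $U_i$ is the bi-Lipschitz image of a bounded Lipschitz domain, hence itself a bounded Lipschitz domain. Consequently $U_i$ is an $(\eps, \delta)$-domain with constants depending only on $d$ and on the bi-Lipschitz norm of $\Phi_{x_i}$; taking the worst such constants across the finite family yields uniform $\eps$ and $\delta$. The identity $O_i \cap \Omega = O_i \cap U_i$ is verified on the chart side, where both sides have $\Phi_{x_i}$-image equal to $(-1/2, 1/2)^{d-1} \times (0, 1/2)$. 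Each $U_i$ is connected, and the bound $\diam(U_i) \geq \Lip(\Phi_{x_i})^{-1}\diam(Q)$ together with finiteness of the family supplies a uniform lower bound $r_0 > 0$.

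The main obstacle here is really just the quantitative bi-Lipschitz invariance of the $(\eps,\delta)$-property. This is standard but deserves a brief verification: a connecting curve $\tilde\gamma \subseteq Q$ for two image points $\Phi_{x_i}(x)$ and $\Phi_{x_i}(y)$ transports under $\Phi_{x_i}^{-1}$ to a curve $\gamma \subseteq U_i$ satisfying $\mathrm{length}(\gamma) \leq \Lip(\Phi_{x_i})\,\mathrm{length}(\tilde\gamma)$ and $\dist(z, \bd U_i) \geq \Lip(\Phi_{x_i})^{-1}\dist(\Phi_{x_i}(z), \bd Q)$; combined with $|\Phi_{x_i}(x) - \Phi_{x_i}(y)| \leq \Lip(\Phi_{x_i})|x - y|$ this yields~\eqref{Eq: Length condition} and~\eqref{Eq: Carrot condition} for $U_i$ with $\eps$ degraded by a factor of order $\Lip(\Phi_{x_i})^{-2}$ and $\delta$ by a factor of order $\Lip(\Phi_{x_i})^{-1}$. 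Everything else reduces to routine compactness bookkeeping.
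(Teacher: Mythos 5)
Your proof follows the same route as the paper's: compactness of $\overline{\Gamma}$ yields a finite chart cover, the Lebesgue number lemma supplies condition~(3), and bi-Lipschitz invariance of the $(\eps,\delta)$-property handles condition~(2). The difference is cosmetic: you shrink the chart neighborhoods and take $U_i = \Phi_{x_i}^{-1}(Q)$ for a fixed half-box $Q$, whereas the paper takes $U_j = O_{x_j}\cap\Omega$ directly (so that the compatibility $O_j\cap\Omega = O_j\cap U_j$ is tautological) and appeals to \cite[Lem.~2.2.20]{Egert_Dissertation} in place of your hands-on curve-transport computation.

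One intermediate sentence is wrong and should be deleted: the assertion that $U_i$ is ``the bi-Lipschitz image of a bounded Lipschitz domain, hence itself a bounded Lipschitz domain'' is false as a general principle---bi-Lipschitz maps do not preserve the class of Lipschitz domains (spiral-type distortions give counterexamples), which is precisely why one works with the bi-Lipschitz-invariant $(\eps,\delta)$-class here. Fortunately this sentence is never actually used: your final paragraph establishes the $(\eps,\delta)$-property for $U_i$ directly by pushing connecting curves of $Q$ forward through $\Phi_{x_i}^{-1}$, which is the correct quantitative argument and essentially the content of the lemma the paper cites. With that one sentence removed, the proof is sound.
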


\begin{proof}
By~\cite[Lem.~2.2.20]{Egert_Dissertation}, for any $x \in \overline{\Gamma}$ the set $U_x \coloneqq O_x \cap \Omega$ is an $(\eps , \delta)$-domain. Here, $\eps$ and $\delta$ do only depend on $d$ and the Lipschitz constant. The compactness of $\overline{\Gamma}$ implies that there exist finitely many $x_1 , \dots , x_m \in \overline{\Gamma}$ such that $\overline{\Gamma} \subseteq \bigcup_{j = 1}^m O_{x_j}$. Define $O_j \coloneqq O_{x_j}$ and $U_j \coloneqq U_{x_j}$ for $j = 1 , \dots , m$. Due to the finiteness of the family $\{ U_j \}_{j = 1}^m$, the constants $\eps$ and $\delta$ can be chosen to be uniform in $j$. Finally, if $r > 0$ is the Lebesgue number of the covering $\{O_j\}_{j=1}^m$, then for all $x_0 \in \overline{\Gamma}$ there exists $1 \leq j \leq m$ such that $\B(x_0 , r) \subseteq O_j$. Thus, all requirements in Assumption~\ref{Ass: Locally eps-delta near Gamma} are fulfilled.
\end{proof}

Next, we give an example of a two-dimensional domain that satisfies Assumption~\ref{Ass: Epsilon-delta assumption} but not Assumption~\ref{Ass: Locally eps-delta near Gamma}. We further show that, within this configuration, the geometry described in Assumption~\ref{Ass: Epsilon-delta assumption} is in some sense optimal.
\begin{example}
\label{Ex: Sector}
Let $\theta \in (0 , \pi)$ and let $\S_{\theta} \subseteq \IR^2$ denote the open sector symmetric around the positive $x$-axis with opening angle $2 \theta$. Let $\Omega \subseteq \IR^2$ be any domain satisfying
\begin{align*}
 \Omega \cap \S_{\theta} = \{ (x , y) \in \S_{\theta}  : y < 0 \},
\end{align*}
and define
\begin{align*}
 D \coloneqq \partial \Omega \cap \bigl[\,\IR^2 \setminus \S_{\theta}\,\bigr] \qquad \text{and} \qquad \Gamma \coloneqq \partial \Omega \setminus D = (0 , \infty) \times \{ 0 \}.
\end{align*}
Essentially, this means that inside the sector $\S_{\theta}$ the domain $\Omega$ looks like the lower half-space and the half-space boundary that lies inside $\S_{\theta}$ is $\Gamma$. In the complement of the sector $\S_{\theta}$, $\Omega$ could be any open set and the boundary of $\Omega$ in the complement of $\S_{\theta}$ is defined to be $D$. See Figure~\ref{Fig: Wild geometry} for an example of such a configuration. \par
    \begin{figure}
	\centering
  \includegraphics[width=0.55\textwidth]{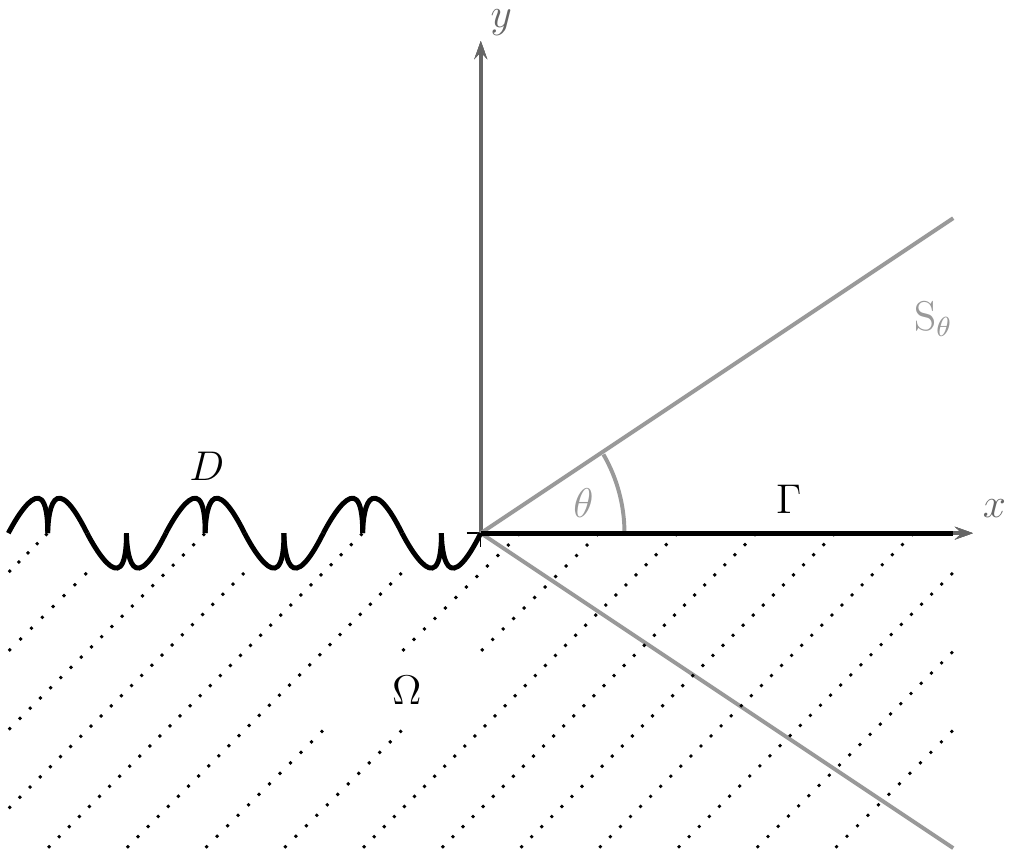}
  \caption{A generic picture of a domain described in Example~\ref{Ex: Sector}.}
  \label{Fig: Wild geometry}
    \end{figure}
To verify that such a domain fulfills the geometric setup described in Assumption~\ref{Ass: Epsilon-delta assumption}, consider first the set
\begin{align*}
 \Delta_{\theta} \coloneqq (\IR^2 \setminus \overline{\S_\theta} ) \cup \{ (x , y) \in \IR^2 : y < 0 \},
\end{align*}
which is an $(\eps , \delta)$-domain for some values $\eps , \delta > 0$. Since $\Omega \subseteq \Delta_\theta$ and $\overline{\Gamma} \subseteq \bd\Delta_\theta$, the $(\eps,\delta)$-paths with respect to $\Delta_\theta$ for points in $\Omega$ satisfy~\eqref{Eq: Length condition} and~\eqref{Eq: Carrot condition}. Hence, to conclude the example, we only have to show that there exists $K > 0$ such that for all $z \in \Delta_{\theta}$ and with $\Xi = \IR^2 \setminus \overline{\Gamma}$ it holds 
\begin{align}
\label{Eq: Distance in example}
 \qhdist_{\Xi} (\Omega , z) \leq K.
\end{align}
Since the paths obtained above take their values only in $\Delta_{\theta}$ this will establish the remaining condition~\eqref{Eq: Quasihyperbolic distance condition}. Notice that since $\S_{\theta} \cap \{ (x , y) \in \IR^2 : y < 0 \} \subseteq \Omega$ it suffices to show that there exists $K > 0$ such that for all $z \in \Delta_{\theta}$ it holds
\begin{align*}
 \qhdist_{\Xi} (\S_{\theta} \cap \{ (x , y) \in \IR^2 : y < 0 \} , z) \leq K.
\end{align*}
We only describe one particular case in detail, the remaining cases are similar and left to the interested reader. Assume that $\theta < \pi / 2$ and pick $z = (v , w) \in \Delta_{\theta}$ with $v \geq 0$ and $w > 0$. Choose $(x , y) \in \partial \S_{\theta}$ such that $y \coloneqq - w$ and let $\gamma \coloneqq \gamma_1 + \gamma_2 + \gamma_3$ with
\begin{align*}
 \gamma_1 &: [0 , 1] \to \IR^2, \quad t \mapsto (x , y) + t (y - x , 0), \\
 \gamma_2 &: [0 , 1] \to \IR^2, \quad t \mapsto (y , y) + t (0 , w - y), \\
 \gamma_3 &: [0 , 1] \to \IR^2, \quad t \mapsto (y , w) + t (v - y , 0).
\end{align*}
This construction is depicted in Figure~\ref{Fig: qhdistance}.
    \begin{figure}[!ht]
	\centering
  \includegraphics[width=0.43\textwidth]{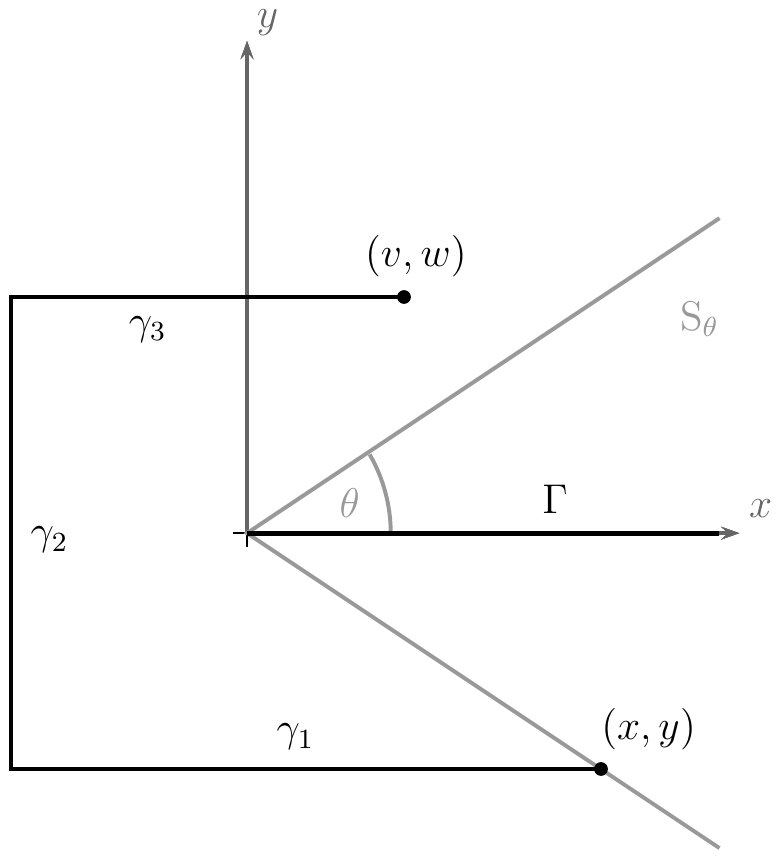}
  \caption{A path connecting $(v , w)$ and $(x , y)$ that is 'short' with respect to the quasihyperbolic distance.}
  \label{Fig: qhdistance}
    \end{figure}
The path $\gamma$ then connects $(x , y)$ to $(v , w)$ and
\begin{align*}
 \qhdist_{\Xi} ((x , y) , (v , w)) \leq \int_0^1 \frac{\lvert y - x \rvert}{\lvert y \rvert} \; \d t + \int_0^1 \frac{\lvert w - y \rvert}{\lvert y \rvert} \; \d t + \int_0^1 \frac{\lvert v - y \rvert}{w} \; \d t = 4 + \frac{x+v}{w}.
\end{align*}
Notice that $x = w / \tan(\theta)$ and that $v \leq w/ \tan(\theta)$, so that
\begin{align*}
 \qhdist_{\Xi} ((x , y) , (v , w)) \leq 2 \Big(2 + \frac{1}{\tan(\theta)}\Big).
\end{align*}
In the remaining cases $v < 0$ and $w \geq 0$, $v < 0$ and $w < 0$, or $v \geq 0$ and $w < 0$ the quasihyperbolic distance to $\Omega$ will even be smaller. This proves the validity of~\eqref{Eq: Distance in example} and thus, since $\Omega$ is connected and hence~\eqref{Eq: Diameter Condition} is void, that $\Omega$ fulfills Assumption~\ref{Ass: Epsilon-delta assumption}.
\end{example}

\begin{remark}
Notice that the geometric setup in Assumption~\ref{Ass: Locally eps-delta near Gamma} imposes boundary regularity in a neighborhood of $\overline{\Gamma}$, while in the situation described in Example~\ref{Ex: Sector} the portion $D$ of $\partial \Omega$ can be arbitrarily irregular as long as it stays outside of $\S_{\theta}$.
\end{remark}

We conclude this section by giving examples of domains where the boundary portion $D$ fails to remain outside of a sector $\S_{\theta}$ and show that the $\W^{1 , p}_D$-extension property fails for these types of domains. These examples show that interior cusps that lie directly on the interface separating $D$ and $\Gamma$ destroy the $\W^{1 , p}_D$-extension property. The same happens with \enquote{interior cusps at infinity}, that is to say, if $D$ and $\Gamma$ approach each other at infinity at a certain rate.

\begin{example}[Interior boundary cusp in zero]
\label{Ex: Interior boundary cusp in zero}
Let $\alpha \in (1 , \infty)$ and consider
\begin{align*}
  \Omega \coloneqq \IR^2 \setminus \{ (x , y) \in \IR^2 \colon x \geq 0 \text{ and } - x^{\alpha} \leq y \leq 0 \}.
\end{align*}
Define $D$ and $\Gamma$ via
\begin{align*}
 D \coloneqq \{ (x , y) \in \IR^2 : x \geq 0 \text{ and } - x^{\alpha} = y \}
 \qquad \text{and} \qquad
 \Gamma \coloneqq (0, \infty) \times \{ 0 \}.
\end{align*}
To prove that the $\W^{1 , p}_D$-extension property fails, let $1 < p < \infty$ and $0 < r < \infty$. Let $f_r$ be a smooth function, that is supported in
\begin{align*}
 Q_r \coloneqq \{ (x , y) \in \IR^2 : r / 2 \leq x \leq 2 r \text{ and } 0 \leq y \leq r \},
\end{align*}
satisfies $0 \leq f_r \leq 1$, and is identically 1 on
\begin{align*}
 R_r \coloneqq \{ (x , y) \in \IR^2 : 3 r / 4 \leq x \leq 3 r / 2 \text{ and } 0 \leq y \leq r / 2 \}.
\end{align*}
Moreover, let $f_r$ be such that $\| \nabla f_r \|_{\L^{\infty}}\lesssim r^{-1}$. In this case
\begin{align}
\label{Eq: Scaling of f}
 \| f_r \|_{\W^{1 , p} (\Omega)}^p \lesssim (r^2 + r^{2 - p}).
\end{align}
Next, employ the fundamental theorem of calculus and a density argument to conclude that for all $F \in \W^{1 , p} (\IR^d)$ it holds
\begin{align*}
 \int_{3 r / 4}^{3 r / 2} F(x , 0) \; \d x - \int_{3 r / 4}^{3 r / 2} F (x , - x^{\alpha}) \; \d x = \int_{3 r / 4}^{3 r / 2} \int_{- x^{\alpha}}^0 \partial_y F(x , y) \; \d y \; \d x.
\end{align*}
If there exists a bounded extension operator $E : \W^{1 , p}_D (\Omega) \to \W^{1 , p}_D (\IR^2)$, put $F \coloneqq E f_r$ and conclude that the second integral on the left-hand side vanishes since $E f_r \in \W^{1 , p}_D (\IR^2)$. Using further that by construction the trace of $E f_r$ onto the set $(3 r / 4 , 3 r / 2) \times \{ 0 \}$ is identically $1$, one concludes
\begin{align*}
 \frac{3 r}{4} \leq \int_{\frac{3 r}{4}}^{\frac{3 r}{2}} \int_{- x^{\alpha}}^0 \lvert \partial_y E f_r (x , y) \rvert \; \d y \; \d x \lesssim r^{(\alpha + 1) / p^{\prime}} \| E f_r \|_{\W^{1 , p} (\IR^2)}.
\end{align*}
Here, $p'$ denotes the H\"{o}lder-conjugate exponent to $p$. Dividing by $r$ and using that $E$ is bounded delivers together with~\eqref{Eq: Scaling of f} the relation
\begin{align}
\label{Eq: Contradictory relation}
 1 \lesssim r^{(\alpha + 1) / p^{\prime} - 1} (r^{2 / p} + r^{2 / p - 1}),
\end{align}
which results  for $r \to 0$ in the condition
\begin{align*}
 \frac{\alpha + 1}{p^{\prime}} + \frac{2}{p} - 2 \leq 0 \qquad \Longleftrightarrow \qquad \alpha \leq 1.
\end{align*}
This is a contradiction since $\alpha$ is assumed to be in $(1 , \infty)$. Thus, there cannot be a bounded extension operator $E : \W^{1 , p}_D (\Omega) \to \W^{1 , p}_D (\IR^2)$.
\end{example}

%\begin{example}[Exterior boundary cusp at infinity]
%Let $\alpha \in (0 , \infty)$ and $R > 0$ and let $\Omega \subseteq \IR^2$ be such that
%\begin{align*}
%  \Omega \cap Q(0 , R)^c = \{ (x , y) \in \IR^2 : x > 0 \text{ and } - x^{- \alpha} < y < 0 \} \cap Q(0 , R)^c.
%\end{align*}
%Define $D$ and $\Gamma$ such that
%\begin{align*}
% D \cap Q(0 , R)^c &= \{ (x , y) \in \IR^2 : x \geq 0 \text{ and } - x^{- \alpha} = y \} \cap Q(0 , R)^c \\
% \Gamma \cap Q(0 , R)^c &= \{ (x , 0) \in \IR^2 : x > 0 \} \cap Q(0 , R)^c.
%\end{align*}
%\end{example}

\begin{example}[Interior boundary cusp at infinity]
\label{Ex: Interior boundary cusp at infinity}
Let $\alpha \in (0 , \infty)$ and consider
\begin{align*}
  \Omega \coloneqq \{ (x , y) \in \IR^2 : \text{either } y > 0 \text{ or } x > 0 \text{  and } y < - x^{- \alpha} \}.
\end{align*}
Define $D$ and $\Gamma$ via
\begin{align*}
 D \coloneqq \{ (x , y) \in \IR^2 : x > 0 \text{ and } - x^{- \alpha} = y \}
\qquad \text{and} \qquad
 \Gamma \coloneqq \IR \times \{ 0 \}.
\end{align*}
The proof that in this situation there does not exists a bounded extension operator $E$ from $\W^{1 , p}_D (\Omega)$ to $\W^{1 , p}_D (\IR^2)$ for any $p \in (1 , \infty)$ is similar to Example~\ref{Ex: Interior boundary cusp in zero} and we omit the details.
%This time, using a parameter $0 < \beta < 1$, we replace $Q_r$ and $R_r$ by
%\begin{align*}
% Q_r^{\beta} \coloneqq \{ (x , y) \in \IR^2 : r / 2 \leq x \leq 2 r \text{ and } 0 \leq y \leq r^{\beta} \}
%\end{align*}
%and
%\begin{align*}
% R_r^{\beta} \coloneqq \{ (x , y) \in \IR^2 : 3 r / 4 \leq x \leq 3 r / 2 \text{ and } 0 \leq y \leq r^{\beta} / 2 \}.
%\end{align*}
%Take a smooth function $f_r^{\beta}$ that satisfies $0 \leq f_r^{\beta} \leq 1$, is identically $1$ on $R_r^{\beta}$, supported in $Q_r^{\beta}$, and satisfies $\| \nabla f_r^{\beta} \|_{\L^{\infty}} \lesssim r^{- \beta}$. As in Example~\ref{Ex: Interior boundary cusp in zero} one derives a relation similar to~\eqref{Eq: Contradictory relation}, which reads in the present situation
%\begin{align*}
% 1 \lesssim r^{(1 - \alpha) / p^{\prime} - 1} (r^{(1 + \beta) / p} + r^{(1 + \beta) / p - \beta}).
%\end{align*}
%For $r \to \infty$ this results in the condition
%\begin{align*}
% \frac{1 - \alpha}{p^{\prime}} - 1 + \frac{1 + \beta}{p} \geq 0 \qquad \Leftrightarrow \qquad \alpha \leq \frac{\beta p^{\prime}}{p}.
%\end{align*}
%As $\beta \in (0 , 1)$ was arbitrary, for $\beta \to 0$ this yields the contradiction $\alpha \leq 0$. Thus, there cannot exist a bounded extension operator $E : \W^{1 , p}_D (\Omega) \to \W^{1 , p}_D (\IR^2)$.
\end{example}

%\begin{figure}[!ht]
%\centering
%\begin{minipage}{.55\textwidth}
%  \centering
%  \includegraphics[width=0.9\textwidth]{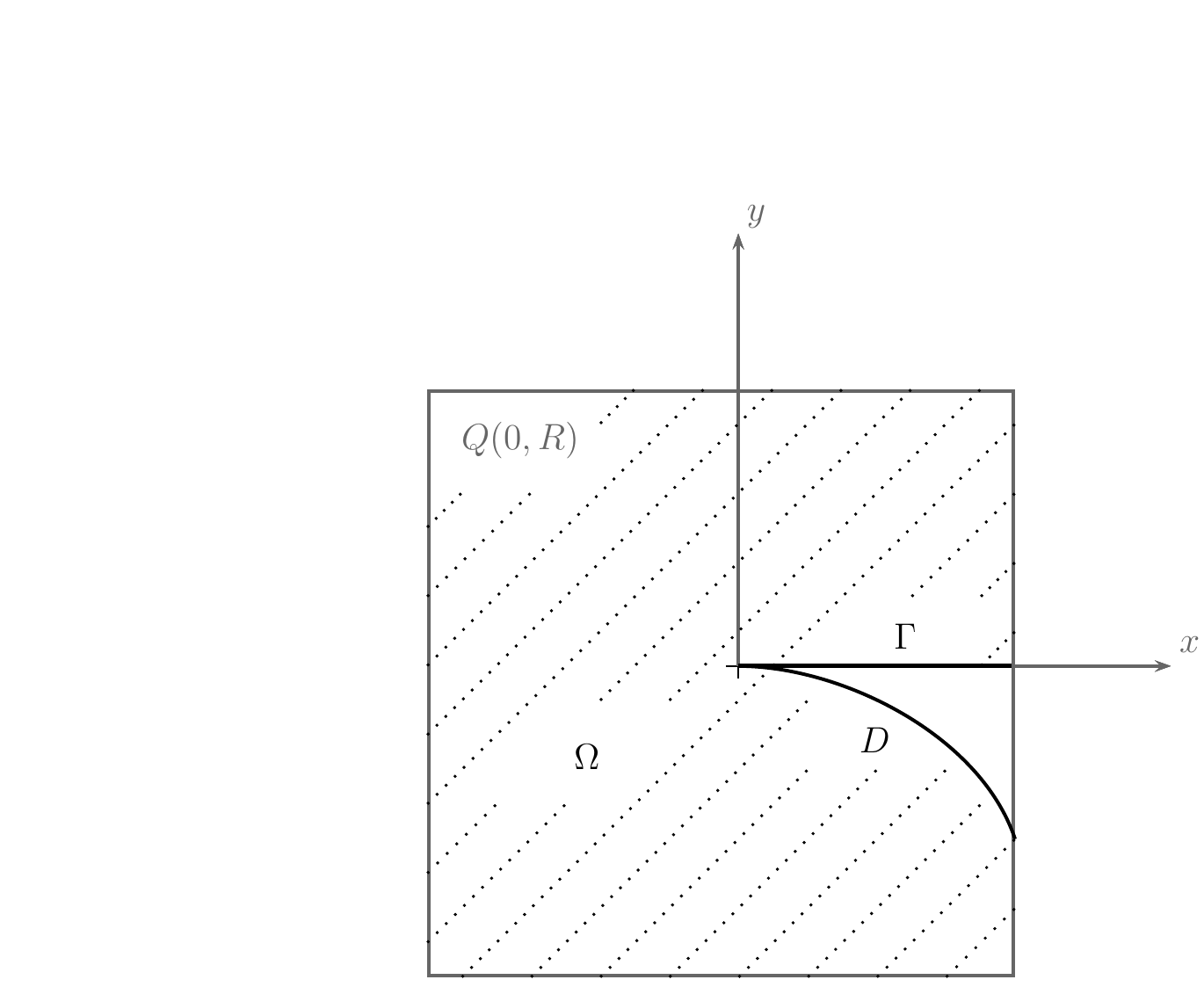}
%  \caption{Situation in Example~\ref{Ex: Interior boundary cusp in zero}.}
%  \label{Fig: Interior cusp}
%\end{minipage}
%\hfill
%\begin{minipage}{.43\textwidth}
%  \centering
%  \includegraphics[width=0.76\textwidth]{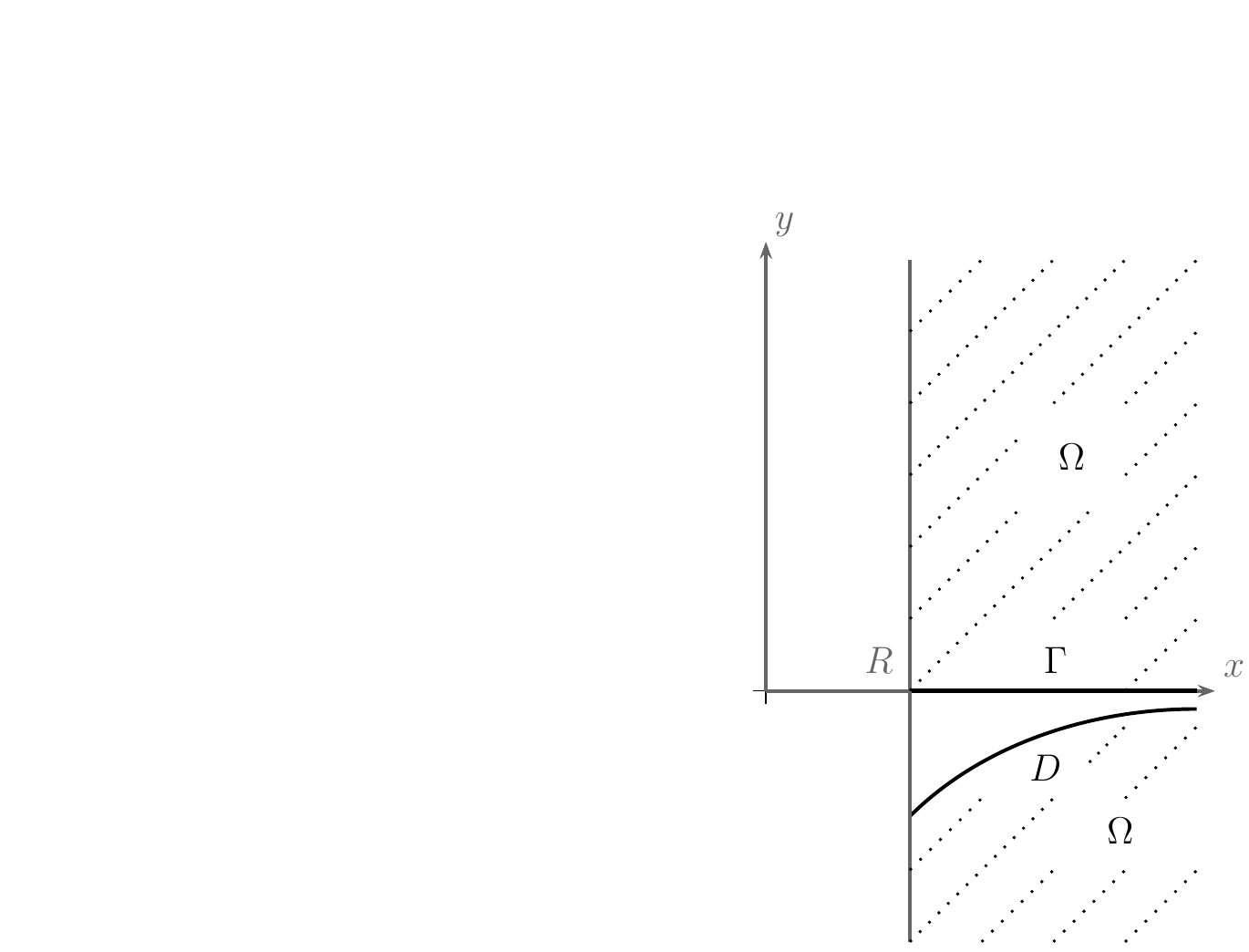}
%  \caption{Situation in Example~\ref{Ex: Interior boundary cusp at infinity}.}
%  \label{Fig: Interior cusp at infinity}
%\end{minipage}
%\end{figure}

\section{Whitney decompositions and the quasihyperbolic distance}
\label{Sec: Whitney decompositions}

\noindent In this section, we introduce the Whitney decomposition of an open subset of $\IR^d$ and show how condition~\eqref{Eq: Quasihyperbolic distance condition} relates to properties of Whitney cubes. A cube $Q \subseteq \IR^d$ is always closed and is said to be
\textit{dyadic} if there exists $k \in \IZ$ such that $Q$ coincides
with a cube of the mesh determined by the lattice $2^{-k} \IZ^d$. Two
cubes are said to \textit{touch} if 
%%the
a face of one cube lies in 
%%the
a face of the other cube, and they are said to \emph{intersect} if their intersection is non-empty. The sidelength of a cube is denoted by
$\ell(Q)$. 
%%Following sentence is edited. 
For a number $\alpha > 0$ the  dilation of $Q$ about its center by the
factor $ \alpha$ is denoted by  $\alpha Q$. \par
Let $F \subseteq \IR^d$ be a non-empty closed set. Then, by~\cite[Thm. VI.1]{Stein} there exists a collection of cubes $\{ Q_j \}_{j \in \IN}$ with pairwise disjoint interiors such that
\begin{enumerate}[labelindent=\parindent, leftmargin=*, label=(\roman*), widest=(iii),align=left, ref=\roman*]
 \item $\bigcup_{j \in \IN} Q_j = \IR^d \setminus F$, \label{Enum: Covering property}
 \item $\diam(Q_j) \leq \dist(Q_j , F) \leq 4 \diam(Q_j)$ for all $j \in \IN$,  \label{Enum: Distance property}
 \item the cubes $\{ Q_j \}_{j \in \IN}$ are dyadic, \label{Eq: Cubes are dyadic}
 \item $\frac{1}{4} \diam(Q_j) \leq \diam(Q_k) \leq 4 \diam(Q_j)$ if $Q_j \cap Q_k \neq \emptyset$, \label{Enum: Comparison property}
 \item each cube has at most $12^d$ intersecting cubes. \label{Enum: Bound Intersecting cubes}
\end{enumerate}
The collection $\{ Q_j \}_{j \in \IN}$ are called 
Whitney cubes and will be denoted by $\WW(F)$. In connection with Whitney cubes, the letters \eqref{Enum: Covering property}-\eqref{Enum: Bound Intersecting cubes} refer always to the above properties. We say that a
collection of cubes $Q_1 , \dots , Q_m \in \WW(F)$ is a
\textit{touching chain} if $Q_j$ and $Q_{j + 1}$ are touching cubes
and that it is an \textit{intersecting chain} if $Q_j \cap Q_{j + 1} \neq
\emptyset$ for all $j = 1 , \dots , m - 1$. The \textit{length} of
%%RMB
a chain is the number $m$.

Let us mention that for a cube $Q\in \WW(F)$ and $x\in Q$ we have $\diam(Q) \geq \frac{1}{5} \dist(x, F)$. This follows from
\begin{align*}
	4\diam(Q) \geq \dist(Q, F) \geq \dist(x, F) - \diam(Q),
\end{align*}
and will be used freely in the rest of this article.
  
The following lemma translates~\eqref{Eq: Quasihyperbolic distance condition} to the existence of intersecting chains of uniformly bounded length. Notice that if $(\Xi_m)_{m \in \mathcal{I}}$ denotes the connected components of the set $\Xi = \IR^d \setminus \overline{\Gamma}$, Gehring and Osgood~\cite[Lem.~1]{Gehring_Osgood} proved that for any two points $x , y \in \Xi_m$ there exists a quasihyperbolic geodesic $\gamma_{x , y}$ with endpoints $x$ and $y$ satisfying
\begin{align*}
 \qhdist_{\Xi}(x , y) = \int_{\gamma_{x , y}} \frac{1}{\dist(z , \partial \Xi)} \; \lvert \d z \rvert.
\end{align*}
Trivially, if $\Xi = \IR^d$, then any path connecting $x$ and $y$ is a quasihyperbolic geodesic.

\begin{lemma}
\label{Lem: Hyperbolic distance to chain condition}
Fix $k > 0$. There exists a constant $N = N(d , k) \in \IN$ such that for all $x , y \in \Xi$ with $\qhdist_{\Xi} (x , y) \leq k$ there exists an intersecting chain $Q_1 , \dots , Q_m \in \WW(\overline{\Gamma})$ with $x \in Q_1$ and $y \in Q_m$ and $m \leq N$. \par
Conversely, if for $x , y \in \Xi$ there exists an intersecting chain connecting $x$ and $y$ of length less than $N \in \IN$, then there exists a constant $k = k(N) > 0$ such that $\qhdist_{\Xi}(x , y) \leq k$.
\end{lemma}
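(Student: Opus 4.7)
The plan is to treat the two implications separately. For the forward direction I exploit the existence of a quasihyperbolic geodesic $\gamma$ (guaranteed by Gehring--Osgood when $\Xi \neq \IR^d$; the case $\Xi = \IR^d$ is vacuous by Remark~\ref{Rem: Properties QHD}) to confine the path to a small region and to force the visited Whitney cubes to have comparable sidelength. For the converse, I build an explicit piecewise-linear curve traversing the chain and estimate the integrand using the Whitney property \eqref{Enum: Distance property}.

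\emph{Forward direction.} Parameterising $\gamma$ by arc length $s \in [0,L]$ and setting $g(s) \coloneqq \dist(\gamma(s), \partial \Xi)$, the map $g$ is $1$-Lipschitz. Hence
\begin{align*}
\lvert \log g(s) - \log g(0) \rvert \leq \int_0^s \frac{\lvert g'(r) \rvert}{g(r)}\, \d r \leq \int_0^s \frac{\d r}{g(r)} \leq k,
\end{align*}
so $g(s) \in [e^{-k} g(0), e^{k} g(0)]$ for all $s$. Consequently $L \leq (\max_s g(s)) \cdot \qhdist_\Xi(x,y) \leq k e^{k} g(0)$, so $\gamma \subseteq \overline{\B}(x, k e^{k} g(0))$. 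Every Whitney cube $Q \in \WW(\overline{\Gamma})$ that meets $\gamma$ satisfies, by \eqref{Enum: Distance property} together with the estimate $\diam(Q) \geq \frac{1}{5}\dist(z,\overline{\Gamma})$ recalled before the lemma, $\ell(Q) \sim g(0)$ up to factors of $e^{\pm k}$ and $\sqrt{d}$. Since the interiors of Whitney cubes are pairwise disjoint, a volume-packing bound of a ball of radius $\lesssim k e^{k} g(0)$ by cubes of sidelength $\gtrsim e^{-k} g(0)$ gives a number $N_0(d,k)$ depending only on $d$ and $k$. Listing these cubes in the order they are first visited by $\gamma$ produces a sequence in which consecutive cubes meet at the transition point, hence form an intersecting chain from a cube containing $x$ to one containing $y$ of length at most $N(d,k) \coloneqq N_0(d,k)$.

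\emph{Converse direction.} For an intersecting chain $Q_1, \dots, Q_m$ with $x \in Q_1$ and $y \in Q_m$, denote by $c_j$ the center of $Q_j$ and pick $p_j \in Q_j \cap Q_{j+1}$. Connecting $x \to c_1 \to p_1 \to c_2 \to p_2 \to \cdots \to p_{m-1} \to c_m \to y$ by straight segments yields a rectifiable curve in $\Xi$: convexity guarantees that each segment lies in a single Whitney cube (either $Q_j$ or $Q_{j+1}$), and $Q_j \subseteq \Xi$ since $\dist(Q_j, \overline{\Gamma}) \geq \diam(Q_j) > 0$. Each segment has Euclidean length at most $\diam(Q_j) = \sqrt{d}\, \ell(Q_j)$, while on $Q_j$ one has $\dist(z,\partial\Xi) \geq \diam(Q_j)$ by \eqref{Enum: Distance property}. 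Thus each segment contributes at most $1$ to $\int \frac{\lvert \d z \rvert}{\dist(z,\partial\Xi)}$, and summing over the at most $2m$ segments gives $\qhdist_\Xi(x,y) \leq 2m \leq 2N \eqqcolon k(N)$.

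The main obstacle is the forward direction, and specifically the packing step that converts the quasihyperbolic bound into a combinatorial bound on cube counts. The point is that the quasihyperbolic bound simultaneously caps the Euclidean length of $\gamma$ \emph{and} prevents $g$ from oscillating much, which is precisely what makes all visited cubes comparable in size and therefore amenable to a volume argument whose output depends only on $d$ and $k$. The converse, by contrast, is essentially a bookkeeping exercise once one observes that the elementary estimate $\dist(z,\overline{\Gamma}) \geq \diam(Q)$ on a Whitney cube turns each cube into a universal unit cost for the quasihyperbolic functional.
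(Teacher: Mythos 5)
Your proof is correct, and the forward direction takes a genuinely different route from the paper's, so a comparison is worthwhile.

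\textbf{Forward direction.} The paper splits into two cases. When $x$ and $y$ are much closer than their distance to $\Gamma$, it uses property~\eqref{Enum: Comparison property} directly to produce a chain of length at most~$4$. Otherwise it invokes Herron--Koskela~\cite[Prop.~2.2]{Herron_Koskela}, which covers the geodesic by at most $20\sqrt{d}\,\qhdist_{\Xi}(x,y)$ balls of radius comparable to the boundary distance, and then bounds the number of Whitney cubes meeting each ball. Your proof replaces both the case split and the Herron--Koskela citation with a single elementary estimate: writing $g(s)=\dist(\gamma(s),\partial\Xi)$ along the arc-length parameterization of the geodesic, the $1$-Lipschitz property of $g$ gives $\lvert\log g(s)-\log g(0)\rvert\leq\qhdist_{\Xi}(x,y)\leq k$, pinning $g$ in $[e^{-k}g(0),e^{k}g(0)]$ and bounding the Euclidean length of $\gamma$ by $ke^{k}g(0)$. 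All Whitney cubes that meet $\gamma$ then have diameter comparable (with constants depending only on $d,k$) to $g(0)$ and sit inside a ball of radius $\lesssim(k+1)e^{k}g(0)$, so a volume-packing count gives the constant $N(d,k)$. This is more self-contained and arguably more transparent; both proofs ultimately rest on the Gehring--Osgood existence of geodesics. One small imprecision: \enquote{listing the cubes in the order they are first visited by $\gamma$} does not by itself give consecutive intersections, since a later first visit need not abut the previously recorded cube if $\gamma$ revisits earlier cubes. What is true and suffices is that the cubes meeting $\gamma$ form a connected intersection graph (each transition point of $\gamma$ lies in both cubes it separates), so the cardinality bound yields an intersecting chain of length $\leq N_0(d,k)$ from $x$'s cube to $y$'s; the paper is equally terse at this step. \textbf{Converse direction.} Essentially identical to the paper's; routing through the cube centers $c_j$ gives $2m$ segments instead of $m$ and hence $\qhdist_{\Xi}(x,y)\leq 2m$, a harmless factor-of-two loss against the paper's $\qhdist_{\Xi}(x,y)\leq m$.
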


\begin{proof}
Notice that $\qhdist_{\Xi} (x , y) < \infty$ implies that $x$ and $y$ lie in the same connected component of $\Xi$. Assume first that
\begin{align}
\label{Eq: x and y close together}
 \abs{x - y} \leq \frac{1}{10 \sqrt{d}} \min\{ \dist(x , \Gamma) , \dist(y , \Gamma) \}.
\end{align}
Let $Q_x , Q_y \in \WW (\overline{\Gamma})$ with
$x \in Q_x$ and $y \in Q_y$, and let
%%RMB By neighboring cubes, I think you mean ``intersecting cubes'' It
%%is probably best to use the terminology you have already defined,
%%rather than a new term. I will let  you make the final judgement. 
 $\widetilde{Q}_x$ denote the region occupied by $Q_x$ and all its
intersecting Whitney cubes and similarly let $\widetilde{Q}_y$ denote its
counterpart for $Q_y$. Then by~\eqref{Enum: Comparison property} 
%%RMB I think that constant in the inequality below should be
%%1/sqrt{d}. I realize this has no impact on the result. 
%%PT No I think the constant is correct. The 4 in the denominator 
%%comes from property (iii) of the Whitney decomposition
\begin{align*}
 \dist(x , \widetilde{Q}_x^{c}) \geq \frac{1}{4 \sqrt{d}} \diam(Q_x) \qquad \text{and} \qquad \dist(y , \widetilde{Q}_y^{c}) \geq \frac{1}{4 \sqrt{d}} \diam(Q_y).
\end{align*}
This combined with~\eqref{Eq: x and y close together} yields
\begin{align*}
 \dist(x , \widetilde{Q}_x^c) \geq \frac{1}{4 \sqrt{d}} \diam(Q_x) \geq \frac{1}{2} \abs{x - y}.
\end{align*}
By symmetry, the same is valid for $y$ instead of $x$. Consequently, $\widetilde{Q}_x$ and $\widetilde{Q}_y$ have a common point and thus, $x$ and $y$ can be connected by an intersecting chain of length at most $4$. \par
Now, let
\begin{align*}
 \abs{x - y} > \frac{1}{10 \sqrt{d}} \min\{ \dist(x , \Gamma) , \dist(y , \Gamma) \}.
\end{align*}
Assume without loss of generality that $\dist(x , \Gamma) \leq \dist(y , \Gamma)$. Fix a quasihyperbolic geodesic $\gamma_{x , y}$ that connects $x$ with $y$ (see the discussion before this proof). Then Herron and Koskela~\cite[Prop.~2.2]{Herron_Koskela} ensures the existence of points 
$y_0 \coloneqq x , y_1 , \dots , y_\ell \in \IR^d \setminus \overline{\Gamma}$ such that $\gamma_{x,y}$ is contained in the closure of $\bigcup_{i = 0}^\ell B_i$, where $B_i \coloneqq \B(y_i , r_i)$ with $r_i \coloneqq \dist(y_i , \Gamma) / (10 \sqrt{d})$, and such that
\begin{align}
\label{Eq: Construction of finite chains}
\ell \leq 20 \sqrt{d} \qhdist_{\Xi} (x , y).
\end{align}
Next, we estimate the number of Whitney cubes that cover each of these balls. Denote the number of Whitney cubes that cover $\overline{B_i}$ by $W_i$. Let $Q \in \WW(\overline{\Gamma})$ be such that $Q \cap \overline{B_i} \neq \emptyset$. Then,
\begin{align*}
\diam(Q) \geq \frac{1}{4} \dist(Q , \Gamma) \geq \frac{1}{4} [\dist(y_i , \Gamma) - r_i - \diam(Q) ],
\end{align*}
so that by definition of $r_i$
\begin{align*}
 \diam(Q) \geq \frac{(10 \sqrt{d} - 1) \dist(y_i , \Gamma)}{50 \sqrt{d}}.
\end{align*}
Moreover,
\begin{align*}
 \diam(Q) &\leq \dist(Q , \Gamma) \leq \dist(B_i \cap Q, \Gamma) \leq \dist(y_i , \Gamma) + r_i = \bigg[ 1 + \frac{1}{10 \sqrt{d}} \bigg] \dist(y_i , \Gamma).
\end{align*}
Consequently,
\begin{align*}
 W_i \bigg[ \frac{(10 \sqrt{d} - 1) \dist(y_i , \Gamma)}{50 d} \bigg]^d &\leq \sum_{\substack{Q \in \WW(\overline{\Gamma}) \\ Q \cap \cl{B_i} \neq \emptyset}} \abs{Q} \leq \Big\lvert B\Big(y_i , \Big[1 + \frac{1}{5 \sqrt{d}}\Big] \dist(y_i , \Gamma) \Big) \Big\rvert,
\end{align*}
what proves that $W_i$ is controlled by a constant depending only on $d$. We conclude by~\eqref{Eq: Construction of finite chains} and by the bound on each $W_i$ that there exists an intersecting chain connecting $x$ and $y$ of length bounded by a constant depending only on $d$ and $k$. \par
For the other direction, let $Q_1 , \dots , Q_m$ be an intersecting chain that connects $x$ with $y$ and with $m \leq N$. Thus, by definition $Q_j \cap Q_{j + 1} \neq \emptyset$. Let $\gamma$ be a path connecting $x$ and $y$ which is constructed by linearly connecting a point in $Q_{j - 1} \cap Q_j$ with a point in $Q_j \cap Q_{j + 1}$. Thus, employing~\eqref{Enum: Distance property} delivers
\begin{align*}
 \qhdist_{\Xi} (x , y) &\leq \sum_{j = 1}^m \int_{\gamma \cap Q_j} \frac{1}{\dist(Q_j , \Gamma)} \; \lvert \d z \rvert \leq \sum_{j = 1}^m \frac{\diam(Q_j)}{\diam(Q_j)} = m. \qedhere
\end{align*}
\end{proof}

%\begin{lemma}
%If $\Omega \subseteq \IR^d$ is subject to  Assumption \ref{Ass: Epsilon-delta assumption}, then with possibly different $\eps$ and $\delta$, Assumption \ref{Ass: Epsilon-delta assumption} holds true for all $x , y \in \Omega \cup \partial \Omega \setminus \overline{\partial \Omega \setminus D}$.
%\end{lemma}

%\begin{proof}
%If $x , y \in \Omega$, there is nothing to prove. Consider the case, where $x \in \partial \Omega \setminus \overline{\partial \Omega \setminus D}$ and $y \in \Omega$ with $\abs{x - y} < \frac{\delta}{2}$. The case where also $y \in \partial \Omega \setminus \overline{\partial \Omega \setminus D}$ follows the same lines of the proof we are presenting. \par
%Let $\widetilde{x} \in \Omega$ with $\abs{x - \widetilde{x}} < \frac{1}{2} \min\{ \delta , \dist(x , \partial \Omega \setminus D)\}$ and let $\widetilde{\gamma}$ be the path connecting $\widetilde{x}$ and $y$ provided by Assumption \ref{Ass: Epsilon-delta assumption}.
%\end{proof}

%%%%%%%%%%%%%%%%%%%%%%%%%%%%%%%%%%%%%%%%%%%%%%%%%%%%%%%%%%%%%%%%%%%%%%%%%%%%%%%%%%%%%%%%%%%%%%%%%%%%%%%%%%%%%%%%%%%%%%%%%%%%%%%%%%%%%%%%%%%%%%%%%%%%%%%%%%%%%%%%%%%%
\section{Cubes and chains}
\label{Sec: Cubes and chains}

\noindent In this section, we describe how to 'reflect' cubes at $\overline{\Gamma}$ if $\Omega$ is subject to Assumption~\ref{Ass: Epsilon-delta assumption} and establish some natural properties of theses 'reflections'. This is an adaption of an argument of Jones presented in~\cite{Jones}. Throughout, assume in Sections~\ref{Sec: Cubes and chains} and~\ref{Sec: The extension operator} that $\Omega$ is an open set
subject 
%%RMB 
to Assumption~\ref{Ass: Epsilon-delta assumption} which satisfies $\overline{\Omega} \neq \IR^d$. (When $\Omega$ is dense in $\IR^d$, Theorem~\ref{Thm: Extension theorem for large radius} follows in a trivial way. The details will be presented separately in the proof of the theorem).
%We will from now on assume that 
%$\mathrm{radius}_{\Gamma}(\Omega) \geq 1$, i.e., that $r_0 = 1$. For general $r_0 > 0$, Theorem~\ref{Thm: Extension theorem for large radius} then follows by scaling. Moreover, without loss of generality
%assume that $\delta \leq 1$. 
Recall that we assume $\diam(\Omega_m) \geq \lambda \delta$, where $(\Omega_m)_m$ are the connected components of $\Omega$ whose boundary hits $\Gamma$. This is in contrast to~\cite{Jones} where Jones assumes without loss of generality (by scaling) that the domain has radius at least $1$ and that $\delta$ is at most $1$. However, this has the disadvantage that homogeneous estimates will only be achievable on small scales even if $\delta=\infty$ and the domain is unbounded. We will comment on this topic later on in Remark~\ref{Rem: Homogeneous estimates}.

\begin{lemma}
\label{Lem: Measure of Neumann boundary}
We have $\abs{\Gamma} = 0$.
\end{lemma}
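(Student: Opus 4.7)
The strategy is a Lebesgue density argument: since $\Gamma = \partial\Omega \setminus D$ is measurable (as the set-difference of two closed sets), it is enough to show that no $x_0 \in \Gamma$ is a point of Lebesgue density one for $\Gamma$, whereupon $\abs{\Gamma}=0$ follows from the Lebesgue density theorem.

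Fix $x_0 \in \Gamma \subseteq \partial\Omega$. Then $x_0 \in \partial\Omega_m$ for some connected component $\Omega_m$ of $\Omega$, and~\eqref{Eq: Diameter Condition} forces $\diam(\Omega_m) \geq \lambda\delta$. For $r > 0$ small enough that $r < \min(\delta, 4\lambda\delta/5)$, I first produce $x, y \in \Omega_m$ with $\abs{x - x_0} < r/10$ and $\abs{y - x_0} = r/2$, so that $\abs{x - y} \in [2r/5, 3r/5] \subseteq (0,\delta)$. To do this, pick $x \in \Omega_m$ with $\abs{x-x_0} < r/10$ (possible as $x_0 \in \overline{\Omega_m}$) and $y^* \in \Omega_m$ with $\abs{x - y^*} > \lambda\delta/2$ (possible by the diameter bound). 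Since open connected subsets of $\IR^d$ are path-connected, let $\eta : [0,1] \to \Omega_m$ be a continuous path joining $x$ and $y^*$. The function $t \mapsto \abs{\eta(t) - x_0}$ is continuous with $\abs{\eta(0)-x_0} < r/10$ and $\abs{\eta(1)-x_0} > 2\lambda\delta/5 > r/2$, so the intermediate value theorem supplies $t_*$ with $\abs{\eta(t_*) - x_0} = r/2$, and I set $y = \eta(t_*)$.

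Next I apply Assumption~\ref{Ass: Epsilon-delta assumption} to $(x, y)$ to obtain a rectifiable curve $\gamma \subseteq \Xi$ from $x$ to $y$ satisfying~\eqref{Eq: Length condition} and~\eqref{Eq: Carrot condition}. I pick $z \in \gamma$ with $\abs{x-z} = \abs{y-z}$, which exists because the continuous function $\abs{x - \argdot} - \abs{y - \argdot}$ changes sign along $\gamma$. The triangle inequality $\abs{x-z} + \abs{y-z} \geq \abs{x-y}$ forces $\abs{x-z} = \abs{y-z} \geq \abs{x-y}/2$, whereupon~\eqref{Eq: Carrot condition} yields $\dist(z,\Gamma) \geq \eps\abs{x-y}/4 \geq \eps r/10$. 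Combining $\abs{z-x} \leq \abs{x-y}/\eps \leq 3r/(5\eps)$ from~\eqref{Eq: Length condition} with $\abs{x - x_0} < r/10$ places the ball $\B(z, \eps r/10)$ entirely inside $\B(x_0, C_\eps r)$ for some $C_\eps$ depending only on $\eps$, while it stays disjoint from $\Gamma$. Consequently $\B(z,\eps r/10)$ occupies a fixed positive fraction of $\abs{\B(x_0, C_\eps r)}$, so $\abs{\B(x_0, C_\eps r) \cap \Gamma} \leq (1-c)\abs{\B(x_0, C_\eps r)}$ with $c > 0$ uniform in $r$, contradicting $\Gamma$ having density one at $x_0$.

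The main technical obstacle is the construction of the pair $(x,y)$ in the second paragraph: it is precisely the diameter condition~\eqref{Eq: Diameter Condition} that rules out $\Omega_m$ collapsing into a thin cluster near $x_0$ and thereby guarantees a point of $\Omega_m$ at exactly radial distance $r/2$ from $x_0$ at every small scale. The remainder is a standard carrot/cigar computation, and in particular the quasihyperbolic condition~\eqref{Eq: Quasihyperbolic distance condition} is not needed here.
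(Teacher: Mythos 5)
Your overall strategy coincides with the paper's: a Lebesgue density argument driven by the carrot condition~\eqref{Eq: Carrot condition}. The difference lies in how the pair $(x,y)$ to which Assumption~\ref{Ass: Epsilon-delta assumption} is applied gets produced, and here there is a gap. The paper fixes a single $y\in\Omega$ with $|x_0-y|<\delta/2$ once and for all (such a $y$ exists merely because $x_0\in\partial\Omega$ and $\delta>0$), then for each small cube $Q$ centered at $x_0$ picks $x\in\Omega$ with $|x-x_0|\leq\tfrac{1}{8}\ell(Q)$ and applies the assumption to the \emph{same} pair; the intermediate value theorem is used only along the carrot $\gamma$ to locate $z$ with $|x-z|=\tfrac{1}{8}\ell(Q)$. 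No reference to connected components of $\Omega$ is needed.

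Your proof instead rebuilds $y$ at every scale $r$, at exact distance $r/2$ from $x_0$, and to do that you begin by asserting that ``$x_0\in\partial\Omega_m$ for some connected component $\Omega_m$ of $\Omega$.'' This is not automatic: if infinitely many components of $\Omega$ accumulate at $x_0$ without $x_0$ lying in any single closure $\overline{\Omega_m}$, the assertion fails. Under Assumption~\ref{Ass: Epsilon-delta assumption} it is plausibly true, but it is precisely the kind of non-trivial claim the paper treats with care elsewhere (compare the case distinction in Lemma~\ref{Lem: Point in Neumann component}), and you give no justification. Your subsequent construction of $y$ via a path in $\Omega_m$ and the use of~\eqref{Eq: Diameter Condition} depend on this unproved step. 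Moreover, the closing claim that~\eqref{Eq: Diameter Condition} is the mechanism powering the lemma is misleading: the paper's proof does not use~\eqref{Eq: Diameter Condition} at all; the need for it is an artifact of your choice to place $y$ at a fixed radial distance at each scale. Your observation that~\eqref{Eq: Quasihyperbolic distance condition} plays no role here is correct and agrees with the paper. To repair the proof, either establish $x_0\in\partial\Omega_m$ rigorously, or (simpler) adopt the paper's device of fixing $y$ once.
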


\begin{proof}
Fix $x_0 \in \Gamma$ and $y \in \Omega$ with $\abs{x_0 - y} < \frac{\delta}{2}$. Let $Q$ be \textit{any} cube in $\IR^d$ centered in $x_0$ with $l(Q) \leq \frac{1}{2} \abs{x_0 - y}$. We will show that $[\IR^d \setminus \Gamma] \cap Q$ has Lebesgue measure comparable to that of $Q$. Let $x \in \Omega$ with $\abs{x - x_0} \leq \frac{1}{8} l(Q)$. Then, we have
\begin{align}
\label{Eq: Gamma has measure zero}
\abs{x - y} \geq \frac{15}{8} l(Q) \qquad \text{and} \qquad \abs{x - y} \leq \frac{17}{16} \abs{x_0 - y}.
\end{align}
Let $\gamma$ be a path connecting $x$ and $y$ subject to Assumption~\ref{Ass: Epsilon-delta assumption} (note that $|x-y|<\delta$ is either void if $\delta=\infty$ or otherwise it follows from the second inequality in~\eqref{Eq: Gamma has measure zero}).
%TODO Hab hier noch ne kurze Erklaerung bzgl. der Abstandsvoraussetzung eingebaut. Kann man auch wegmachen aber man rechnet ja aus dass der Faktor 17/16 > 1 durch 1/2 zu <1 wird was erst Fishy wirken koennte wenn man eh schon mit infty rechnet, also nur um Verwirrungen vorzubeugen...
By virtue of~\eqref{Eq: Gamma has measure zero}, the intermediate value theorem implies that there exists $z \in \gamma$ with $\abs{x - z} = \frac{1}{8} l(Q)$. This point lies in $\frac{1}{2} Q$ by construction. Moreover,~\eqref{Eq: Carrot condition} together with $\lvert y - z \rvert \geq \lvert x - y \rvert - \lvert x - z \rvert$ implies
\begin{align*}
\dist(z , \Gamma) \geq \frac{\eps l(Q)}{8} \frac{\abs{x - y} - \abs{x - z}}{\abs{x - y}} \ge \frac{\eps l(Q)}{8} ( 1 - \frac{l(Q)}{8|x-y|}) \geq \frac{7 \eps}{60} l(Q).
\end{align*}
Thus, $\limsup_{l(Q) \to 0} \frac{\abs{[\IR^d \setminus \Gamma] \cap Q}}{\abs{Q}} > 0$,
where the $\limsup$ is taken over all cubes centered 
%%RMB replaced in by at
at $x_0$. Since $\chi_{\IR^d \setminus \Gamma}(x_0) = 0$ and $\chi_{\IR^d \setminus \Gamma} \in
\L^1_{\loc}(\IR^d)$ Lebesgue's differentiation theorem %%RMB replaced reveals by implies
implies $\abs{\Gamma} = 0$.
\end{proof}

To proceed, we define two families of cubes. The family of interior cubes is given by
\begin{align*}
 \WW_i \coloneqq \{ Q \in \WW(\overline{\Gamma}) : Q \cap \Omega \neq \emptyset \}.
\end{align*}
These interior cubes will be the reflections of exterior cubes $\WW_e$. To define $\WW_e$ 
we use numbers $A > 0$ and $B > 2$
whose values are to be fixed during this section and define
\begin{align*}
 \WW_e \coloneqq \{Q \in \WW(\overline{\Omega}) : \diam(Q) \leq A \delta \text{ and } \dist(Q , \Gamma) < B \dist(Q , \partial \Omega \setminus \Gamma)\}.
\end{align*}

\begin{remark}
\label{Rem: Neighborhood of Neumann boundary}
First, the collection $\WW_e$ is empty if and only if $D=\bd \Omega$. Indeed, if $D=\bd \Omega$ then the second condition in the definition $\WW_e$ can never be fulfilled. To the contrary, if $\Gamma$ is non-empty, then, using the relative openness of $\Gamma$, one can fix a ball centered in $\Gamma$ that does not intersect $D$, and small cubes inside this ball will satisfy both conditions.
Second, if $D \neq \partial \Omega$, then for a cube $Q \in \WW_e$ we have
\begin{align*}
 \dist(Q , \Omega) = \min\{\dist(Q , \Gamma) , \dist(Q , \partial \Omega \setminus \Gamma)\} \geq B^{-1} \dist(Q , \Gamma)
\end{align*}
what implies that for all $Q \in \WW_e$ it holds
\begin{align}
\label{Eq: Distance Gamma and Omega}
 \dist(Q , \Omega) \leq \dist(Q , \Gamma) \leq B \dist(Q , \Omega).
\end{align}
Thus, the diameter of $Q$ is comparable to its distance to $\Gamma$.
\end{remark}

For the rest of this section, we assume that $\Gamma \neq \emptyset$. Before we present how to 'reflect' cubes, we prove a technical lemma that, given an exterior cube $Q \in \WW_e$, allows us to find a connected component of $\Omega$ whose boundary intersects $\Gamma$ and which is not too far away from $Q$.

\begin{lemma}
\label{Lem: Point in Neumann component}
Let $Q \in \WW_e$. Then there exists a connected component $\Omega_m$ of $\Omega$ with $\Gamma \cap \partial \Omega_m \neq \emptyset$ and $x \in \Omega_m$ with
\begin{align*}
 \dist(x , Q) \leq 5 B \diam(Q).
\end{align*}
\end{lemma}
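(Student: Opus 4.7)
My strategy is to first locate a point $p\in\Gamma$ close to $Q$, then exploit that $D$ is closed together with the length condition~\eqref{Eq: Length condition} to show that some connected component $\Omega_m$ of $\Omega$ has $p$ on its boundary, and finally pick any $x\in \Omega_m$ very close to $p$ as the witness. The distance bound is then immediate from the triangle inequality.

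\textbf{Distance estimate and choice of $p$.} Since $Q\in\WW(\overline\Omega)$, property~\eqref{Enum: Distance property} gives $\dist(Q,\overline\Omega)\leq 4\diam(Q)$. Because $Q$ is disjoint from the open set $\Omega$, the closest point of $\overline\Omega$ to $Q$ lies on $\bd\Omega=\Gamma\cup D$, so
\begin{align*}
	\min\bigl(\dist(Q,\Gamma),\dist(Q,D)\bigr)=\dist(Q,\bd\Omega)=\dist(Q,\overline\Omega)\leq 4\diam(Q).
\end{align*}
Combining this with $\dist(Q,\Gamma)<B\dist(Q,D)$ from $Q\in\WW_e$ yields $\dist(Q,\Gamma)\leq 4B\diam(Q)$ in either case. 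Fix a small $\eta>0$ (to be specified) and choose $p\in\Gamma$ with $\dist(p,Q)\leq\dist(Q,\Gamma)+\eta\leq 4B\diam(Q)+\eta$. Since $D$ is closed and $p\notin D$, we have $r_0\coloneqq\dist(p,D)>0$, and hence $\bd\Omega\cap \B(p,r_0)\subseteq\Gamma\subseteq\overline\Gamma$.

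\textbf{Locating the component.} Set $r\coloneqq \min(r_0\eps/4,\delta/2)>0$. I claim that every two points $y_1,y_2\in\Omega\cap \B(p,r)$ lie in the same connected component of $\Omega$. Indeed, $\abs{y_1-y_2}<2r\leq\delta$, so Assumption~\ref{Ass: Epsilon-delta assumption} produces a rectifiable curve $\gamma\subseteq\Xi$ from $y_1$ to $y_2$ with $\mathrm{length}(\gamma)\leq\eps^{-1}\abs{y_1-y_2}\leq 2r/\eps$. For any $z\in\gamma$,
\begin{align*}
	\abs{z-p}\leq \abs{z-y_1}+\abs{y_1-p}\leq \frac{2r}{\eps}+r\leq \frac{r_0}{2}+\frac{r_0}{4}<r_0,
\end{align*}
so $\gamma\subseteq \B(p,r_0)$. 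Since $\gamma$ avoids $\overline\Gamma$ (by $\gamma\subseteq\Xi$) and $\bd\Omega\cap \B(p,r_0)\subseteq\overline\Gamma$, the curve never meets $\bd\Omega$. Thus $\gamma$ lies in the open set $\B(p,r_0)\setminus\bd\Omega$, whose two pieces are $\B(p,r_0)\cap\Omega$ and $\B(p,r_0)\setminus\overline\Omega$; by connectedness of $\gamma$ and the fact that its endpoints are in $\Omega$, $\gamma\subseteq\Omega$, so $y_1,y_2$ lie in a common component $\Omega_m$. Because $p\in\bd\Omega$, there are points of $\Omega$ arbitrarily close to $p$; all those in $\B(p,r)$ belong to the same $\Omega_m$, so $p\in\overline{\Omega_m}\setminus\Omega=\bd\Omega_m$. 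Hence $\bd\Omega_m\cap\Gamma\ni p$ is nonempty.

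\textbf{Conclusion and main obstacle.} Pick $x\in\Omega_m$ with $\abs{x-p}<\eta$, which exists since $p\in\bd\Omega_m$ (and we make $\eta<r$). Then
\begin{align*}
	\dist(x,Q)\leq \abs{x-p}+\dist(p,Q)\leq 2\eta+4B\diam(Q),
\end{align*}
and choosing $\eta<B\diam(Q)/2$ gives $\dist(x,Q)\leq 5B\diam(Q)$. The main obstacle is the middle step: that $p\in\Gamma$ actually belongs to $\bd\Omega_m$ for some component. A priori, $\Omega$ could in principle shatter into infinitely many tiny components accumulating at $p$ without any single one having $p$ on its boundary. The closedness of $D$ isolates $p$ from $D$ and thus forces $\bd\Omega$ to be a subset of $\overline\Gamma$ locally near $p$, while~\eqref{Eq: Length condition} provides curves in $\Xi$ that by definition avoid $\overline\Gamma$; together these ingredients rule out such pathological shattering.
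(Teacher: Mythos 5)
Your proof is correct but takes a genuinely different route from the paper's. You use condition~\eqref{Eq: Length condition} to build a short curve $\gamma\subseteq\Xi$ between any two points of $\Omega\cap\B(p,r)$; since $\gamma$ stays in $\B(p,r_0)$ where $\bd\Omega$ lies entirely in $\Gamma\subseteq\overline\Gamma$, and $\gamma$ avoids $\overline\Gamma$, you conclude $\gamma\subseteq\Omega$ and hence $\Omega\cap\B(p,r)$ lies in a single component $\Omega_m$ with $p\in\bd\Omega_m\cap\Gamma$. The paper's proof uses \emph{none} of the quantitative $(\eps,\delta)$ structure: after locating $x''\in\Gamma$ near $Q$, it splits the components of $\Omega$ into those touching $\Gamma$ ($\Omega_m$) and those not ($\Upsilon_m$), notes $\bd\Upsilon_m\subseteq D$, and argues by contradiction that a sequence $x_n\to x''$ living in the $\Upsilon_{m_n}$ would produce boundary points $x_n'\in\bd\Upsilon_{m_n}\subseteq D$ with $x_n'\to x''$, forcing $x''\in D$ by closedness of $D$. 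So the paper gets by with a purely topological argument and a weaker use of the hypotheses, while your argument is constructive (it exhibits the component directly) at the cost of invoking the connectivity assumption. One small bookkeeping wrinkle in your write-up: you fix $\eta$ first, then $p$ and $r$ depend on $\eta$, yet you impose $\eta<r$ at the end, which is circular as stated. It is harmless — since $p\in\bd\Omega_m$ you can pick $x\in\Omega_m$ with $|x-p|<\min(\eta,r)$ without any constraint on $\eta$ beyond $\eta<B\diam(Q)/2$ — but the dependency should be untangled.
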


\begin{proof}
By~\eqref{Enum: Distance property} and Remark~\ref{Rem: Neighborhood of Neumann boundary}, there exists $x^{\prime} \in \overline{\Gamma}$ such that $\dist(x^{\prime} , Q) \leq 4 B
\diam(Q)$. Since $x^{\prime} \in \overline{\Gamma}$ there is $x^{\prime \prime} \in \Gamma$ with $\dist(x^{\prime \prime} , Q) \leq \frac{9}{2} B \diam(Q)$. \par
%%RMB Rewrote the following sentence.
Denote the at most countable family of connected components of $\Omega$ whose boundary has
a non-empty intersection with $\Gamma$ by
$\{\Omega_m\}_m$ and the connected components whose boundary has an empty intersection
with $\Gamma$ by $\{ \Upsilon_m
\}_m$. 

\par
If there is $\Omega_m$ with $x^{\prime \prime} \in \partial \Omega_m$, then the proof is finished. 
% ALTE Variante
%If not, we establish the existence of a sequence $(x_n)_{n \in \IN}$ and indices $m_n$ with $x_n \in \Omega_{m_n}$ and $x_n \to x^{\prime \prime}$ as $n \to \infty$, whose existence still concludes the proof. To this end, assume to the contrary that there exists $\eta > 0$ such that for all $x \in \bigcup_m \Omega_m$
%\begin{align*}
% \abs{x - x^{\prime \prime}} \geq \eta.
%\end{align*}
%However, because $x^{\prime \prime} \in \partial \Omega$ there exists a sequence $(x_n)_{n \in \IN}$ in $\Omega$ with $x_n \to x^{\prime \prime}$ as $n \to \infty$. For $n$ large there must therefore exist indices $m_n$ with $x_n \in \Upsilon_{m_n}$.
% VARIANTE ROBERT
If not, pick a sequence $(x_n)_n$ in $\Omega$ that converges to $x''\in \Gamma \subseteq \bd \Omega$. If almost all $x_n$ are contained in the union of the $\Omega_m$, this concludes the proof as well. Otherwise, choose a subsequence (again denoted by $x_n$) for which there are indices $m_n$ such that $x_n \in \Upsilon_{m_n}$.
Furthermore, $x^{\prime \prime} \in \overline{\Upsilon}_{m}^c$ for all $m$ since $\Gamma \cap \partial \Upsilon_{m} = \emptyset$. Now, by connecting $x^{\prime \prime}$ and $x_n$ by a straight line, the intermediate value theorem implies the existence of a point $x_n^{\prime} \in \partial \Upsilon_{m_n}$ with
\begin{align*}
 \abs{x^{\prime \prime} - x_n^{\prime}} \leq \abs{x^{\prime \prime} - x_n}.
\end{align*}
Passing to the limit $n \to \infty$ yields $x^{\prime \prime} \in D$ by the closedness of $D$ and thus a contradiction.
\end{proof}

The following lemma assigns to every cube in $\WW_e$ a `reflected' cube in $\WW_i$. For the rest of Sections~\ref{Sec: Cubes and chains} and~\ref{Sec: The extension operator} we will reserve the letter $N$ to denote the constant $N$ appearing in Lemma~\ref{Lem: Hyperbolic distance to chain condition} applied with $k = 2 K$, where $K$ is the number from Assumption~\ref{Ass: Epsilon-delta assumption}. Notice that $N$ solely depends on $d$ and $K$. For the rest of this paper we make the following agreement.

\begin{agreement}
\label{Ag: Cube Sec}
If $X$ and $Y$ are two quantities and if there exists a constant $C$ depending only on $d$, $p$, $K$, $\lambda$, and $\eps$ such that $X \leq C Y$ holds, then we will write $X \lesssim Y$ or $Y \gtrsim X$. If both $\frac{Y}{C} \leq X \leq C Y$ holds, then we will write $X \simeq Y$.
\end{agreement}

%\begin{remark}
%The dependence on the parameter $p$ only occurs in Section~\ref{Sec: The extension operator}.
%\end{remark}

\begin{lemma}
\label{Lem: Reflection of cubes}
There exist constants $C_1 = C_1(N , \eps) > 0$ and $C_2=C_2(\lambda) > 0$ such that if $A B \leq C_1$ and $B\geq C_2$, then for every $Q \in \WW_e$ there exists a cube $R \in \WW_i$ satisfying
\begin{align}
\label{Eq: Comparison of cube and reflected cube}
\diam(Q) \leq \diam(R) \lesssim (1 + B + (A B)^{-1}) \diam(Q)
\end{align}
and
\begin{align}
\label{Eq: Estimate distance of reflected cube}
 \dist(R , Q) \lesssim (1 + B + (A B)^{-1}) \diam(Q).
\end{align}
\end{lemma}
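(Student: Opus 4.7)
The plan is to construct $R$ by first locating a Whitney cube $R' \in \WW(\overline{\Gamma})$ that carries a carefully chosen point $z$ on an $(\eps,\delta)$-path near $Q$, and then reaching $\WW_i$ from $R'$ through a short intersecting chain of Whitney cubes provided by~\eqref{Eq: Quasihyperbolic distance condition} via Lemma~\ref{Lem: Hyperbolic distance to chain condition}. Write $\mu \coloneqq \diam(Q)$.

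First I would apply Lemma~\ref{Lem: Point in Neumann component} to obtain a connected component $\Omega_m$ of $\Omega$ with $\bd \Omega_m \cap \Gamma \neq \emptyset$ and a point $x \in \Omega_m$ satisfying $\dist(x, Q) \leq 5B\mu$. Since $\Omega_m$ is open and connected (hence path-connected) in $\IR^d$, and $\diam(\Omega_m) \geq \lambda \delta$ by~\eqref{Eq: Diameter Condition}, I may fix $y \in \Omega_m$ with $|x-y| = \rho$ for any prescribed $\rho \in (0, \diam(\Omega_m)/2]$. I would pick $\rho$ to be a large constant multiple of $\mu/\eps$ (so that~\eqref{Eq: Carrot condition} is productive at scale $\mu$ and so that $\rho$ dominates $\dist(x,Q)$) and simultaneously at most a fixed fraction of $\min(\delta, \lambda\delta)$ (so that Assumption~\ref{Ass: Epsilon-delta assumption} is applicable and $y$ exists in $\Omega_m$). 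The compatibility of these lower and upper bounds is precisely what forces the thresholds $AB \leq C_1(N,\eps)$ and $B \geq C_2(\lambda)$.

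With $\rho < \delta$, Assumption~\ref{Ass: Epsilon-delta assumption} supplies a rectifiable path $\gamma \subseteq \Xi$ from $x$ to $y$ satisfying~\eqref{Eq: Length condition}--\eqref{Eq: Quasihyperbolic distance condition}. Using the intermediate value theorem on the arclength parametrisation, I would select $z \in \gamma$ with both $|x-z|$ and $|y-z|$ comparable to $\rho$, so that~\eqref{Eq: Carrot condition} gives $\dist(z, \Gamma) \gtrsim \eps \rho \gtrsim \mu$. Letting $R' \in \WW(\overline{\Gamma})$ be the Whitney cube containing $z$, property~\eqref{Enum: Distance property} yields $\diam(R') \simeq \dist(z, \Gamma)$. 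Since $\qhdist_\Xi(z, \Omega) \leq K$ by~\eqref{Eq: Quasihyperbolic distance condition}, picking any $w \in \Omega$ with $\qhdist_\Xi(z, w) \leq 2K$ and invoking Lemma~\ref{Lem: Hyperbolic distance to chain condition} with $k = 2K$ produces an intersecting chain $R' = Q_1, \ldots, Q_n \in \WW(\overline{\Gamma})$ of length $n \leq N$ with $w \in Q_n$. I set $R \coloneqq Q_n$; since $w \in Q_n \cap \Omega$ we have $R \in \WW_i$, and iterating~\eqref{Enum: Comparison property} along the chain yields $\diam(R) \simeq \diam(R') \simeq \dist(z, \Gamma)$.

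The lower bound $\mu \leq \diam(R)$ then follows from $\diam(R) \gtrsim \eps\rho$ by absorbing comparability constants into the choice of $\rho$. For the upper bound, the triangle inequality
\begin{equation*}
\dist(z, \Gamma) \leq |z - x| + \dist(x, Q) + \diam(Q) + \dist(Q, \Gamma) \lesssim \rho/\eps + B\mu,
\end{equation*}
combined with $\dist(Q,\Gamma) \leq 4\mu$, $\dist(x,Q) \leq 5B\mu$, and $\rho$ running up to $\lesssim \lambda\delta \lesssim (\lambda/A)\mu$ yields the claimed $\diam(R) \lesssim (1 + B + (AB)^{-1})\mu$; the distance bound~\eqref{Eq: Estimate distance of reflected cube} then drops out by chaining $R \to R' \to z \to x \to Q$ through the triangle inequality. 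The main obstacle I expect will be the careful simultaneous balancing of the four scales $\mu, \rho, \delta, \diam(\Omega_m)$ against the conditions~\eqref{Eq: Length condition}--\eqref{Eq: Quasihyperbolic distance condition}, which is exactly what pins down the precise values of $C_1$ and $C_2$.
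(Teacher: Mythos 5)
Your overall architecture matches the paper's proof exactly: locate a nearby point $x$ via Lemma~\ref{Lem: Point in Neumann component}, step off to a point $y$ at a carefully calibrated distance in the same component, take a midpoint $z$ on the $(\eps,\delta)$-path, lower-bound $\dist(z,\Gamma)$ by~\eqref{Eq: Carrot condition}, and then use~\eqref{Eq: Quasihyperbolic distance condition} with Lemma~\ref{Lem: Hyperbolic distance to chain condition} to reach $\WW_i$ through an intersecting chain of bounded length. That is the paper's argument. However, your bookkeeping around the step length $\rho = |x-y|$ is muddled in ways worth flagging. The paper takes $\rho = (AB)^{-1}\diam(Q)$, and this precise choice is what injects the $(AB)^{-1}$ term into both~\eqref{Eq: Comparison of cube and reflected cube} and~\eqref{Eq: Estimate distance of reflected cube}; your phrasing \enquote{a large constant multiple of $\mu/\eps$} does not pin this down, and the subsequent chain of inequalities $\rho \lesssim \lambda\delta \lesssim (\lambda/A)\mu$ has the second inequality backwards---from $\mu \leq A\delta$ one gets $\lambda\delta \geq \lambda\mu/A$, not the reverse---so it does not produce the $(AB)^{-1}\mu$ contribution you claim. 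Also, the requirement that $\rho$ \enquote{dominate} $\dist(x,Q)$ is neither needed nor achievable with the paper's choice, since $\dist(x,Q) \leq 5B\mu$ while $\rho = (AB)^{-1}\mu$ can be much smaller when $B$ is large. Finally, for $Q\in\WW_e$ one has $\dist(Q,\Gamma) \leq 4B\diam(Q)$ via~\eqref{Eq: Distance Gamma and Omega}, not $\leq 4\diam(Q)$ as you wrote, though this slip is harmless since the final bound already carries a $B$ term. With $\rho$ set to $(AB)^{-1}\diam(Q)$, your argument becomes the paper's and the stated thresholds $AB \leq C_1(N,\eps)$, $B \geq C_2(\lambda)$ drop out of the requirements that the reflected cube be large enough and that $\rho < \min(\delta,\lambda\delta/2)$.
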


\begin{proof}
Fix $Q \in \WW_e$ and recall that $\diam (Q) \leq A \delta$ by definition of $\WW_e$ and $B>2$. By Lemma~\ref{Lem: Point in Neumann component}
there exists a connected component $\Omega_m$ of $\Omega$ with
$\Gamma \cap \partial \Omega_m \neq \emptyset$ and $x \in
\Omega_m$ with $\dist(x , Q) \leq 5 B \diam(Q)$.
We introduce the additional lower bound $B\geq 3/\lambda$, which is only needed in the case $\delta<\infty$ but we choose $B$ always that large for good measure.

So, in the case $\delta<\infty$, since $(A B)^{-1} \diam (Q) \leq \delta B^{-1} < \min(\delta,\lambda \delta/2) \leq \min(\delta, \,\diam(\Omega_m)/2)$ according to~\eqref{Eq: Diameter Condition}, we find $y \in \Omega_m$ satisfying
\begin{align}
\label{Eq: Distance from x ot y}
 \lvert x - y \rvert = (A B)^{-1} \diam (Q) \qquad \text{and} \qquad \lvert x - y \rvert < \delta.
\end{align}
If $\delta=\infty$, then $\Omega$ is unbounded, so we again find $y\in \Omega$ satisfying the first condition whereas the second  becomes void for Assumption~\ref{Ass: Epsilon-delta assumption}.

\indent Hence, let $\gamma$ be a path provided by Assumption~\ref{Ass: Epsilon-delta assumption} connecting $x$ and $y$, and let $z \in \gamma$ with $\abs{x - z} = \frac{1}{2} \abs{x - y}$. Estimate by virtue of~\eqref{Eq: Carrot condition} and~\eqref{Eq: Distance from x ot y}
\begin{align}
\label{Eq: Auxiliary distance from Neumann boundary}
\dist(z , \Gamma) \geq \frac{\eps}{2} \abs{y - z} \geq \frac{\eps}{2} \big(\abs{x - y} - \abs{x - z}\big) = \frac{\eps}{4} (A B)^{-1} \diam(Q).
\end{align}
By Assumption~\ref{Ass: Epsilon-delta assumption} we have $\qhdist_{\Xi}(z , \Omega) \leq K$, hence there exists $z^{\prime} \in \Omega$ with $\qhdist_{\Xi} (z , z^{\prime}) \leq 2 K$. Thus, by Lemma~\ref{Lem: Hyperbolic distance to chain condition} there exists an intersecting chain $Q_1 , \dots , Q_m \in \WW(\overline{\Gamma})$ with $Q_m \cap \Omega \neq \emptyset$, $z \in Q_1$, and $m \leq N$. Choose the reflected cube as $R \coloneqq Q_m$. Using~\eqref{Enum: Distance property} and~\eqref{Enum: Comparison property} one gets
\begin{align*}
4 \diam(R) \geq \dist(R , \Gamma) \geq \dist(z , \Gamma) - \sum_{j = 1}^m \diam(Q_j) \geq \dist(z , \Gamma) - \sum_{j = 1}^m 4^{m - j} \diam(R).
\end{align*}
Thus, by~\eqref{Eq: Auxiliary distance from Neumann boundary} and $m \leq N$
\begin{align*}
 \frac{11 + 4^N}{3} \diam(R) \geq \frac{\eps}{4} (A B)^{-1} \diam(Q).
\end{align*}
Consequently, there exists $C = C(N , \eps) > 0$ such that $A B \leq C$ implies $\diam(Q) \leq \diam(R)$. \par
In order to control $\diam(R)$ by $\diam(Q)$, employ~\eqref{Enum: Distance property},~\eqref{Enum: Comparison property}, and the triangle inequality to deduce
\begin{align*}
 4^{1 - m} \diam(R) \leq \diam(Q_1) \leq \dist(z , \Gamma) \leq \dist(z , Q) + \diam(Q) + \dist(Q , \Gamma).
\end{align*}
The right-hand side is estimated by the triangle inequality, followed by~\eqref{Eq: Distance Gamma and Omega} and~\eqref{Enum: Distance property}, the choice $\lvert x - z \rvert = \frac{1}{2} \lvert x - y \rvert$ combined with~\eqref{Eq: Distance from x ot y}, and $\dist(x , Q) \leq 5 B \diam(Q)$, yielding
\begin{align*}
 \dist(z , Q) + \diam(Q) + \dist(Q , \Gamma) &\leq \abs{z - x} + \dist(x , Q) + \diam(Q) + B \dist(Q , \Omega) \\
 &\leq ( (2 A B)^{-1} + 1 + 9 B ) \diam(Q).
\end{align*}
Taking into account that $\dist(z , R) \leq \diam(R) (4^{m} - 1) / 3$ (estimate the sizes of the cubes in the connecting chain using a geometric sum), the distance from $R$ to $Q$ is estimated similarly, yielding
\begin{align*}
 \dist(R , Q) &\leq \diam(R) + \diam(Q) + \abs{x - z} + \dist(z , R) + \dist(x , Q) \\
 &\leq ( 1 + (2 A B)^{-1} + 5 B ) \diam(Q) + \frac{4^{m} + 2}{3} \diam(R).
\end{align*}
Together with the previous estimate, this concludes the proof.
\end{proof}

For the rest of this article, we fix the notation that if $Q \in \WW_e$ and $R \in \WW_i$ is the cube constructed in Lemma~\ref{Lem: Reflection of cubes}, then $R$ is denoted by $R = Q^*$ and $Q^*$ is called the \textit{reflected} cube of $Q$. The next lemma gives a bound on the distance of reflected cubes of two intersecting cubes. Its proof is a direct consequence of Lemma~\ref{Lem: Reflection of cubes} and~\eqref{Enum: Comparison property}, and is thus omitted.

\begin{lemma}
\label{Lem: Distance reflected cubes}
If $Q_1 , Q_2 \in \WW_e$ with $Q_1 \cap Q_2 \neq \emptyset$, then
\begin{align*}
 \dist(Q_1^* , Q_2^*) \lesssim (1 + B + (A B)^{-1}) \diam(Q_1).
\end{align*}
\end{lemma}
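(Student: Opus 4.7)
The plan is to chain distance estimates through a common point of $Q_1$ and $Q_2$. Pick $p \in Q_1 \cap Q_2$. For any $x_1 \in Q_1^*$ and $x_2 \in Q_2^*$ the triangle inequality gives
\begin{align*}
\lvert x_1 - x_2 \rvert \leq \diam(Q_1^*) + \dist(Q_1, Q_1^*) + \diam(Q_1) + \diam(Q_2) + \dist(Q_2, Q_2^*) + \diam(Q_2^*),
\end{align*}
so that taking the infimum yields the analogous bound for $\dist(Q_1^*, Q_2^*)$.

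Next I would control each term. Since $Q_1 \cap Q_2 \neq \emptyset$, property~\eqref{Enum: Comparison property} (applied in $\WW(\cl{\Omega})$) gives $\diam(Q_2) \leq 4 \diam(Q_1)$. Applying Lemma~\ref{Lem: Reflection of cubes} to $Q_i$ bounds both $\diam(Q_i^*)$ and $\dist(Q_i, Q_i^*)$ by a multiple of $(1 + B + (AB)^{-1}) \diam(Q_i)$. Combining these with $\diam(Q_2) \simeq \diam(Q_1)$ yields the claimed estimate. There is no real obstacle here; the entire argument is a bookkeeping of the bounds already established in Lemma~\ref{Lem: Reflection of cubes} together with the Whitney comparability of touching cubes.
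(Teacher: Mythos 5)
Your proof is correct and is exactly the argument the paper has in mind: the authors omit the proof with the remark that it follows directly from Lemma~\ref{Lem: Reflection of cubes} and property~\eqref{Enum: Comparison property}, which is precisely the bookkeeping you carry out.
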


In the proof of the boundedness of the extension operator, one needs to connect Whitney cubes by appropriate touching chains. The following lemma presents a basic principle of how to build a chain out of a path $\gamma$ and how the quantities $\mathrm{length}(\gamma)$ and $\dist(\gamma , \Gamma)$ translate into the length of the chain and the distance of the cubes of the chain to $\Gamma$.

%%RMB Do we need to update this lemma after introducing
%%quasi-hyperbolic distance? No
\begin{lemma}
\label{Lem: Basic chain lemma}
Let $R_1 , R_2 \in \WW(\overline{\Gamma})$ with $R_1 \neq R_2$ and let $x \in R_1$, $y \in R_2$, and $\gamma$ be a rectifiable path in $\IR^d \setminus \overline{\Gamma}$ connecting $x$ and $y$. Assume that there exist constants $C_1 , C_2 > 0$ such that $\mathrm{length}(\gamma) \leq C_1 \diam(R_1)$ and $\dist(z , \Gamma) \geq C_2 \diam(R_1)$ for all $z \in \gamma$, then there exists a touching chain of cubes $R_1 = S_1 , \dots , S_m = R_2$ in $\WW(\overline{\Gamma})$, where $m$ is bounded by a number depending only on $d$, $C_1$, and $C_2$. Moreover,
\begin{align*}
 \frac{C_2}{5} \diam(R_1) \leq \diam(S_i) \leq (5 + C_1) \diam(R_1) \qquad (i = 1 , \dots , m).
\end{align*}
\end{lemma}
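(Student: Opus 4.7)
The plan is three steps: first pin down the diameter of any Whitney cube that meets $\gamma$, then bound the number of such cubes by a volume argument, and finally trace them along $\gamma$ to produce the chain.

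For the size bounds, Whitney property~\eqref{Enum: Distance property} applied to $R_1$ gives $\dist(x, \Gamma) \leq \diam(R_1) + \dist(R_1, \Gamma) \leq 5\diam(R_1)$. Combined with $\mathrm{length}(\gamma) \leq C_1 \diam(R_1)$ this yields $\dist(z, \Gamma) \leq (5 + C_1)\diam(R_1)$ for every $z \in \gamma$. For any Whitney cube $S$ containing such a $z$, property~\eqref{Enum: Distance property} again supplies the upper estimate
\begin{align*}
\diam(S) \leq \dist(S, \Gamma) \leq \dist(z, \Gamma) \leq (5+C_1)\diam(R_1),
\end{align*}
while the hypothesis $\dist(z, \Gamma) \geq C_2\diam(R_1)$, together with the inequality $\diam(S) \geq \tfrac{1}{5}\dist(z, \Gamma)$ recalled right after the Whitney properties list, delivers the matching lower bound $\diam(S) \geq \tfrac{C_2}{5}\diam(R_1)$. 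Once the sizes are controlled, every such cube lies in the ball $\overline{\B}(x, (5+2C_1)\diam(R_1))$; since the cubes have pairwise disjoint interiors and each has volume at least $(C_2/(5\sqrt{d}))^d \diam(R_1)^d$, their number is bounded by a constant $M=M(d,C_1,C_2)$. Call this finite family $\mathcal{S}$, and note $R_1, R_2 \in \mathcal{S}$ since $x \in R_1 \cap \gamma$ and $y \in R_2 \cap \gamma$.

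It remains to organise $\mathcal{S}$ into a chain from $R_1$ to $R_2$. Parameterize $\gamma$ on $[0,1]$ and form the graph on $\mathcal{S}$ whose edges join cubes with non-empty intersection. The preimages $\gamma^{-1}(S)$ for $S \in \mathcal{S}$ are closed and cover the connected set $[0,1]$, so a standard connectedness argument shows this graph is connected and hence supplies a chain $R_1 = S_1, \dots, S_m = R_2$ of intersecting cubes, with $m \leq M$. The step I expect to require the most care is upgrading this \emph{intersecting} chain to a \emph{touching} chain in the sense of the paper: for two closed dyadic cubes with disjoint interiors whose sidelengths differ by at most a factor of four (which is forced here by the size bounds from Step~1 together with property~\eqref{Enum: Comparison property}), any non-empty intersection is realised by a face of the smaller cube lying inside a face of the larger one, via dyadic nesting of facets when the sizes differ and via boundary coincidence in the equal-size case. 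Verifying this face-in-face configuration explicitly from the combinatorics of dyadic cubes of comparable sizes is the one piece where a careful case analysis is needed; once done, it promotes the chain to a touching chain without increasing its length, completing the proof.
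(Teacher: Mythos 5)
The first two-thirds of your argument — the size bounds on cubes meeting $\gamma$ and the volume-counting bound on $\sharp\mathcal{S}$ — are exactly the paper's computation. The gap is in the final step. You claim that for two dyadic cubes with disjoint interiors and sidelengths within a factor of $4$, any non-empty intersection is automatically a face-in-face configuration, so that intersecting $\Rightarrow$ touching. This is false: the unit cubes $[0,1]^d$ and $[1,2]\times[1,2]\times[0,1]^{d-2}$ are dyadic, of equal size, have disjoint interiors, and intersect only in a $(d-2)$-dimensional edge, not in a common facet. The paper's distinction between \emph{intersecting} and \emph{touching} chains is made precisely because these are not the same; your proposed upgrade would erase the distinction. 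Consequently the "intersecting chain $\to$ touching chain" promotion does not go through as written.

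The way to repair this — and what the paper's terse final sentence is implicitly invoking — is to exploit that $\mathcal{S}$ consists of \emph{all} cubes hit by $\gamma$, not just those two at a time. If consecutive cubes $S,S'$ in your intersecting chain meet only in a low-dimensional face, the transition point $z \in \gamma \cap S \cap S'$ lies in \emph{every} Whitney cube containing $z$; all of these belong to $\mathcal{S}$ because they meet $\gamma$ at $z$. Within this finite cluster of dyadic cubes around $z$ (all of comparable size and all already counted in the volume bound) one can route a genuine touching chain from $S$ to $S'$. So the touching chain has length still controlled by $\sharp\mathcal{S}$ and its cubes all obey the established size bounds. Your proof therefore needs this local rerouting argument at shared edges and vertices rather than the per-pair claim you stated; with that replacement the proof matches the paper's.
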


\begin{proof}
Let $\mathcal{S}$ be the \textit{finite} set of cubes in $\WW(\overline{\Gamma})$ intersecting $\gamma$. For $S \in \mathcal{S}$ one finds by~\eqref{Enum: Distance property} and by assumption that $\diam(S) \geq \frac{C_2}{5} \diam(R_1)$. Fix $z \in S \cap \gamma$, then
\begin{align*}
 \dist(z , \Gamma) \leq \dist(x , \Gamma) + \abs{x - z} \leq 5 \diam(R_1) + \mathrm{length}(\gamma) \leq (5 + C_1) \diam(R_1),
\end{align*}
so that $\diam(S) \leq (5 + C_1) \diam(R_1)$ by~\eqref{Enum: Distance property}. This, together with $\mathrm{length} (\gamma) \leq C_1 \diam(R_1)$ implies that $S \subseteq \B(x , (5 + 2 C_1) \diam(R_1))$. Because all elements of $\mathcal{S}$ are mutually disjoint one finds
\begin{align*}
 \sharp (\mathcal{S}) \leq \frac{\abs{\B(x , (5 + 2 C_1) \diam(R_1))}}{(\frac{C_2}{5 \sqrt{d}} \diam(R_1))^d} = \omega_d \bigg( \frac{5 \sqrt{d} (5 + 2 C_1)}{C_2} \bigg)^d,
\end{align*}
where $\sharp(\mathcal{S})$ denotes the cardinality of $\mathcal{S}$ and $\omega_d \coloneqq \abs{\B(0 , 1)}$. By~\eqref{Eq: Cubes are dyadic} the elements of $\mathcal{S}$ are dyadic and thus one finds a subset of $\mathcal{S}$ which is a touching chain starting at $R_1$ and ending at $R_2$.
\end{proof}

\begin{lemma}
\label{Lem: Building chains}
There exist constants $C_1 , C_2 > 0$ depending only on $\eps$, $d$, $\lambda$, and $K$ such that
if $A \leq C_1$ and $B \geq C_2$ and if $Q_j , Q_k
\in \WW_e$ with $Q_j \cap Q_k \neq \emptyset$, then there exists a touching
chain $F_{j , k} = \{ Q_j^* = S_1 , \dots , S_m = Q_k^*\}$ of cubes in
$\WW(\overline{\Gamma})$ connecting $Q_j^*$ and
$Q_k^*$, where $m$ can be bounded uniformly by a constant depending only
on $\eps$, $d$, $K$, $A$, and $B$. Moreover, there exist $K_1 , K_2 > 0$ depending only on $\eps$, $d$, $K$, $A$, and $B$ such that
\begin{align*}
 K_1 \diam(Q_j) \leq \diam(S_i) \leq K_2 \diam(Q_j) \qquad (i = 1 , \dots , m).
\end{align*}
\end{lemma}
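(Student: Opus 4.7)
The plan is to connect $Q_j^*$ and $Q_k^*$ by a rectifiable curve $\gamma \subseteq \Xi$ whose length and minimum distance to $\Gamma$ are both comparable to $\diam(Q_j)$, and then feed $\gamma$ into Lemma~\ref{Lem: Basic chain lemma} to extract the desired touching chain.

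Since $Q_j^*, Q_k^* \in \WW_i$, I may select $y_j \in Q_j^* \cap \Omega$ and $y_k \in Q_k^* \cap \Omega$. Combining Lemmas~\ref{Lem: Reflection of cubes} and~\ref{Lem: Distance reflected cubes} with $\diam(Q_j) \simeq \diam(Q_k)$ (from~\eqref{Enum: Comparison property}) yields
\[
|y_j - y_k| \leq \diam(Q_j^*) + \dist(Q_j^*, Q_k^*) + \diam(Q_k^*) \lesssim (1 + B + (AB)^{-1}) \diam(Q_j),
\]
and since $\diam(Q_j) \leq A \delta$ I also get $|y_j - y_k| \lesssim (A + AB + B^{-1}) \delta$. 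Choosing $C_2$ large enough that $B^{-1}$ is small and then $C_1$ small enough (with $A \leq C_1$, $B \geq C_2$) so that $A(1+B)$ is small, I can enforce $|y_j - y_k| < \delta$ while simultaneously respecting the hypotheses $AB \leq C_1(N, \eps)$ and $B \geq C_2(\lambda)$ of Lemma~\ref{Lem: Reflection of cubes}. Assumption~\ref{Ass: Epsilon-delta assumption} then supplies a rectifiable $\gamma \subseteq \Xi$ joining $y_j$ and $y_k$ with $\mathrm{length}(\gamma) \leq \eps^{-1}|y_j - y_k| \lesssim \diam(Q_j)$ and obeying~\eqref{Eq: Carrot condition}.

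For $z \in \gamma$ I bound $\dist(z, \Gamma)$ from below by a constant multiple of $\diam(Q_j)$ via case analysis. If $|z - y_j| \leq \diam(Q_j^*)/2$, then since $\dist(y_j, \Gamma) \geq \diam(Q_j^*)$ by~\eqref{Enum: Distance property}, the triangle inequality gives $\dist(z, \Gamma) \geq \diam(Q_j^*)/2 \gtrsim \diam(Q_j)$, and the case $|z - y_k| \leq \diam(Q_k^*)/2$ is symmetric. Otherwise, both $|y_j - z|$ and $|y_k - z|$ exceed half the respective reflected diameter, and the carrot condition combined with $|y_j - y_k| \lesssim \diam(Q_j) \simeq \diam(Q_j^*) \simeq \diam(Q_k^*)$ gives $\dist(z, \Gamma) \geq \eps\,|y_j - z||y_k - z|/|y_j - y_k| \gtrsim \diam(Q_j)$.

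Discarding the trivial case $Q_j^* = Q_k^*$, Lemma~\ref{Lem: Basic chain lemma} applied with $R_1 = Q_j^*$, $R_2 = Q_k^*$, $x = y_j$, $y = y_k$ and auxiliary constants $C_1', C_2'$ depending only on $\eps, d, K, A, B$ (encoding $\mathrm{length}(\gamma) \leq C_1' \diam(Q_j^*)$ and $\dist(z, \Gamma) \geq C_2' \diam(Q_j^*)$) furnishes a touching chain of uniformly bounded length, with all constituent cubes having diameters comparable to $\diam(Q_j^*) \simeq \diam(Q_j)$. This establishes the stated bounds. The main delicacy lies in the bookkeeping of the second step: $B$ must be taken large first to control the $B^{-1}$ contribution, and only afterwards may $A$ be shrunk small enough relative to $B$ to make $A(1+B)$ small, so that all constraints are simultaneously satisfied.
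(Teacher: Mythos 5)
Your proof is correct and follows essentially the same route as the paper: reduce to the non-trivial case $Q_j^* \neq Q_k^*$, bound the distance between representative points using Lemmas~\ref{Lem: Reflection of cubes} and~\ref{Lem: Distance reflected cubes}, tune $B$ (large) then $A$ (small) to guarantee $|y_j - y_k| < \delta$, invoke Assumption~\ref{Ass: Epsilon-delta assumption} to obtain a path, verify the length and lower distance-to-$\Gamma$ bounds via~\eqref{Eq: Length condition}, \eqref{Enum: Distance property}, and~\eqref{Eq: Carrot condition}, and feed everything into Lemma~\ref{Lem: Basic chain lemma}. One small point in your favour: you explicitly choose $y_j \in Q_j^* \cap \Omega$ and $y_k \in Q_k^* \cap \Omega$, which is the precise requirement for Assumption~\ref{Ass: Epsilon-delta assumption} to apply, whereas the paper writes $x \in Q_j^*$, $y \in Q_k^*$ without making the $\Omega$-membership explicit.
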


\begin{proof}
If $Q_j^* = Q_k^*$ there is nothing to show. Thus, assume $Q_j^* \neq Q_k^*$ and let $x \in Q_j^\ast$ and $y \in Q_k^\ast$. We show in the following that the assumptions of Lemma~\ref{Lem: Basic chain lemma} are satisfied. \par
%TODO Die Diameter Bedingung scheint mir irrelevant zu sein, um die Argument-Dichte zu erhalten habe ich dafuer einen Verweis auf die Groesse benachbarter Whitney-Wuerfel eingebaut
%Fix $x \in Q_j^* \cap \Omega$ and $y \in Q_k^* \cap \Omega$. Since $\diam(Q_j) \leq A \delta$ one obtains by Lemmas~\ref{Lem: Reflection of cubes} and~\ref{Lem: Distance reflected cubes}
Using Lemmas~\ref{Lem: Reflection of cubes} and~\ref{Lem: Distance reflected cubes} in conjunction with~\eqref{Enum: Comparison property} gives
\begin{align}
\label{Eq: Ensure that x and y are close}
 \abs{x - y} \leq \dist(Q_j^* , Q_k^*) + \diam(Q_j^*) + \diam(Q_k^*) \lesssim (1 + B + (A B)^{-1}) \diam(Q_j).
\end{align}
%TODO hier ist nur der Fall delta endlich relevant
If $\delta$ is finite we get, since $Q_j\in \WW_e$, that
\begin{align*}
	|x-y| \lesssim (A+AB+B^{-1}) \delta,
\end{align*}
so we obtain $|x-y| < \delta$ when we first choose $B$ large enough and afterwards $A$ sufficiently small.
%Estimating $\diam(Q_j)$ by $A \delta$ (this is possible since $Q_j \in \WW_e$) shows that choosing first $B$ large enough and then $A$ small enough ensures $\lvert x - y \rvert < \delta$.
Let $\gamma$ be a path connecting $x$ and $y$ according to Assumption~\ref{Ass: Epsilon-delta assumption}. By~\eqref{Eq: Length condition},~\eqref{Eq: Ensure that x and y are close}, and~\eqref{Eq: Comparison of cube and reflected cube} one finds
\begin{align*}
 \mathrm{length} (\gamma) \lesssim (1 + B + (A B)^{-1}) \diam(Q_j^*).
\end{align*}
To estimate the distance between each $z \in \gamma$ and $\Gamma$, notice that if $\abs{x - z} \leq \frac{1}{2} \diam(Q_j^*)$, then $\dist(z , \Gamma) \geq \frac{1}{2} \diam(Q_j^*)$. Analogously, but by employing additionally~\eqref{Eq: Comparison of cube and reflected cube} twice and~\eqref{Enum: Comparison property}, if $\abs{y - z} \leq \frac{1}{2} \diam(Q_k^*)$, then 
\begin{align}
\label{Eq: Comparing reflected cubes that are close together}
 \dist(z , \Gamma) \geq \frac{1}{2} \diam(Q_k^*) \geq \frac{1}{2} \diam(Q_k) \geq \frac{1}{8} \diam(Q_j) \gtrsim \big(1 + B + (AB)^{-1} \big)^{-1} \diam(Q_j^*).
\end{align}
In the remaining case, one estimates by~\eqref{Eq: Carrot condition}, the calculation performed in~\eqref{Eq: Comparing reflected cubes that are close together}, and~\eqref{Eq: Ensure that x and y are close} that
\begin{align*}
 \dist(z , \Gamma) &\gtrsim \frac{\diam(Q_j^*)^2}{(1 + B + (AB)^{-1}) \abs{x - y}} \gtrsim \frac{\diam(Q_j^*)}{(1 + B + (A B)^{-1})^2}. \qedhere
\end{align*}
\end{proof}

The following lemma provides the existence of chains that `escape $\Omega$' for reflections of cubes $Q \in \WW(\overline{\Omega})$ that are close to a relatively open portion of $D$. These chains will be important to obtain a Poincar\'e inequality with a quantitative control of the constants.

\begin{lemma}
\label{Lem: Building Poincare chains}
There exist constants $C_1 , C_2 > 0$ depending only on $\eps$, $d$, $\lambda$, and $K$ such that
if $A \leq C_1$ and $B \geq C_2$ and if $Q \in \WW(\overline{\Omega}) \setminus \WW_e$ satisfies $\diam(Q) \leq A \delta$ and has a non-empty intersection with a cube in $\WW_e$, then for each intersecting cube $Q_j \in \WW_e$ of $Q$, there exists a touching chain $F_{P , j} = \{Q_j^* = S_1 , \dots , S_m \}$ of cubes in $\WW(\overline{\Gamma})$, where $m$ is bounded by a constant depending only on $\eps$, $d$, $K$, $A$, and $B$ and $S_m \cap Q_j$ is a dyadic cube that satisfies
\begin{align*}
 \abs{S_m \cap Q_j} \gtrsim \diam(Q_j)^d.
\end{align*}
Furthermore, all $S_i \in F_{P , j}$ satisfy
\begin{align*}
 K_1 \diam(Q_j) \leq \diam(S_i) \leq K_2 \diam(Q_j) \qquad (i = 1 , \dots , m).
\end{align*}
The constants $K_1 , K_2 > 0$ depend only on $\eps$, $d$, $K$, $A$, and $B$.
\end{lemma}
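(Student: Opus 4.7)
My plan is to take $S_m$ to be a Whitney cube of $\overline{\Gamma}$ that contains $Q_j$ as a subset, and then to connect $Q_j^*$ to $S_m$ via Lemma~\ref{Lem: Basic chain lemma}. The first step is to observe that the hypotheses on $Q$ force $Q_j$ to lie far from $\Gamma$: since $Q\in\WW(\overline{\Omega})\setminus\WW_e$ has $\diam(Q)\leq A\delta$, the second defining condition of $\WW_e$ fails, so $\dist(Q,\Gamma)\geq B\dist(Q,D)$; combining this with the identity $\dist(Q,\overline{\Omega})=\min\{\dist(Q,\Gamma),\dist(Q,D)\}=\dist(Q,D)$ and the Whitney estimate~\eqref{Enum: Distance property} yields $\dist(Q,\Gamma)\geq B\diam(Q)$. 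A point $z_0\in Q\cap Q_j$ and~\eqref{Enum: Comparison property} then give $\dist(w,\Gamma)\gtrsim B\diam(Q_j)$ for every $w\in Q_j$ as soon as $B$ exceeds a universal constant. Hence the Whitney cube $S_m\in\WW(\overline{\Gamma})$ containing any interior point $w_0$ of $Q_j$ has diameter at least $\dist(w_0,\Gamma)/5\gtrsim B\diam(Q_j)$, and dyadicity forces $Q_j\subseteq S_m$ once $B$ is sufficiently large; then $S_m\cap Q_j=Q_j$ is dyadic with measure $\simeq\diam(Q_j)^d$.

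To apply Lemma~\ref{Lem: Basic chain lemma}, I construct a path $\gamma^*$ in $\Xi=\IR^d\setminus\overline{\Gamma}$ connecting $x_1\in Q_j^*\cap\Omega$ (nonempty because $Q_j^*\in\WW_i$) to $w_0\in Q_j\subseteq S_m$. Since $\dist(Q_j,D)\leq 4\diam(Q_j)$ and $D\subseteq\partial\Omega$, a small perturbation of a nearest point in $D$ produces $y\in\Omega$ with $\dist(y,Q_j)\lesssim\diam(Q_j)$, whence $\dist(y,\Gamma)\gtrsim B\diam(Q_j)$ by the first step. Lemma~\ref{Lem: Reflection of cubes} bounds $|x_1-y|\lesssim(1+B+(AB)^{-1})\diam(Q_j)$, which is less than $\delta$ once $A$ is small and $B$ is large. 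Hence Assumption~\ref{Ass: Epsilon-delta assumption} delivers a path $\gamma_1$ from $x_1$ to $y$ in $\Xi$ satisfying~\eqref{Eq: Length condition} and~\eqref{Eq: Carrot condition}, and I let $\gamma^*$ be the concatenation of $\gamma_1$ with the straight segment $\gamma_2$ from $y$ to $w_0$. If $Q_j^*=S_m$ the trivial chain $\{Q_j^*\}$ already satisfies all requirements, so I assume $Q_j^*\neq S_m$.

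Two conditions of Lemma~\ref{Lem: Basic chain lemma} remain to be verified with implicit constants depending only on $\eps,d,K,A,B$: $\mathrm{length}(\gamma^*)\lesssim\diam(Q_j^*)$ and $\dist(z,\Gamma)\gtrsim\diam(Q_j^*)$ for every $z\in\gamma^*$. The length bound is immediate from $\mathrm{length}(\gamma_1)\leq\eps^{-1}|x_1-y|$, $|y-w_0|\lesssim\diam(Q_j)$, and Lemma~\ref{Lem: Reflection of cubes}. The distance bound on $\gamma_2$ is clear since $\dist(y,\Gamma)\gtrsim B\diam(Q_j)$ dominates $|y-w_0|$. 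On $\gamma_1$ I split into three regions: when $|x_1-z|\leq\dist(x_1,\Gamma)/2$ or $|y-z|\leq\dist(y,\Gamma)/2$ the triangle inequality suffices (using $\dist(x_1,\Gamma)\geq\diam(Q_j^*)\geq\diam(Q_j)$ and $\dist(y,\Gamma)\gtrsim B\diam(Q_j)$), while in the complementary region $|x_1-z|$ and $|y-z|$ admit lower bounds of order $\diam(Q_j)$ and $B\diam(Q_j)$ respectively, so that~\eqref{Eq: Carrot condition} combined with $|x_1-y|\lesssim B\diam(Q_j)$ yields $\dist(z,\Gamma)\gtrsim\diam(Q_j)$. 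Lemma~\ref{Lem: Basic chain lemma} then produces the touching chain, whose length and diameter bounds transfer from $\diam(Q_j^*)$ to $\diam(Q_j)$ via Lemma~\ref{Lem: Reflection of cubes}. The main obstacle is the degeneracy of~\eqref{Eq: Carrot condition} near the endpoints of $\gamma_1$, which forces this case-split; this is also where the lower bound on $B$ enters, both to guarantee $Q_j\subseteq S_m$ and to keep the distance constants from vanishing.
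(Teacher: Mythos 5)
Your proposal is correct and follows essentially the same route as the paper's proof: observe that $Q\notin\WW_e$ forces $\dist(Q_j,\Gamma)\gtrsim B\diam(Q_j)$, pick $y\in\Omega$ near the realizing point of $\dist(Q_j,D)$, and apply Lemma~\ref{Lem: Basic chain lemma} to the concatenation of the Assumption~\ref{Ass: Epsilon-delta assumption} path (from $Q_j^*$ to $y$) with a straight segment into $Q_j$. The only noteworthy deviation is cosmetic: you take $S_m$ to be the Whitney cube of $\overline\Gamma$ containing an interior point of $Q_j$ and force $Q_j\subseteq S_m$ by enlarging $B$, so that $S_m\cap Q_j=Q_j$; the paper instead lands on any cube containing the midpoint of $Q_j$ and handles both nesting cases $S_m\subseteq Q_j$ and $Q_j\subseteq S_m$, deducing $\abs{S_m\cap Q_j}\gtrsim\diam(Q_j)^d$ from $\dist(S_m,\Gamma)\geq\dist(Q_j,\Gamma)$ in the first case. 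Both variants deliver the same conclusion with the same dependency of constants on $\eps$, $d$, $K$, $A$, $B$.
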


\begin{proof}
Let $Q_j \in \WW_e$ be an intersecting cube of $Q$. Then, using properties of the Whitney cubes and $Q \notin \WW_e$, one estimates
\begin{align}
\label{Eq: Distance realized at D}
 B \dist(Q_j , \partial \Omega \setminus \Gamma) \leq 6 B \dist(Q , \partial \Omega \setminus \Gamma) \leq 6 \dist(Q , \Gamma) \leq 36 \dist(Q_j , \Gamma).
\end{align}
Let $B \geq 720$, then~\eqref{Eq: Distance realized at D} and Remark~\ref{Rem: Neighborhood of Neumann boundary} imply $\dist(Q_j , \partial \Omega \setminus \Gamma) = \dist(Q_j , \Omega)$. Hence, using~\eqref{Eq: Distance realized at D} again and~\eqref{Enum: Comparison property}, one finds that $\dist(Q_j , \Gamma) \geq \frac{B}{36} \dist(Q_j , \Omega) \geq \frac{B}{36} \diam(Q_j)$. Let $x_0 \in D$ be such that $\dist(x_0 , Q_j) = \dist(Q_j , \Omega)\leq 4\diam(Q_j)$. The properties collected above then imply
\begin{align*}
 \dist(x_0 , \Gamma) \geq \dist(Q_j , \Gamma) - \dist(x_0 , Q_j) - \diam(Q_j) \geq (36^{-1} B - 5) \diam(Q_j),
\end{align*}
and if $y$ is any point from $\B(x_0 , 5 \diam(Q_j))$ then the previous estimate delivers
\begin{align*}
 \dist(y , \Gamma) \geq \dist(x_0 , \Gamma) - 5 \diam(Q_j) \geq (36^{-1} B - 10) \diam(Q_j) \geq 10 \diam(Q_j).
\end{align*}
%TODO habe das nochmal gegen 10 \diam(Q_j) nach unten abgeschätzt, hast du dazu nicht B \geq 720 gewaehlt?
Fix $y \in \B(x_0 , 5 \diam(Q_j)) \cap \Omega$.
Notice that the midpoint $z$ of $Q_j$ is contained in $\B(x_0 , 5 \diam(Q_j))$. Thus, each point on the line segment $\gamma_1$ connecting $y$ to $z$ has at least a distance which is larger than $10 \diam(Q_j)$ to $\Gamma$. \par
For $x \in Q_j^* \cap \Omega$ Lemma~\ref{Lem: Reflection of cubes} together with $\{ y \} \cup Q_j \subseteq \B(x_0 , 6 \diam(Q_j))$ implies
%TODO hab beim Radius aus der 5 eine 6 gemacht, ich glaube das braucht man weil die Wuerfel abgeschlossen und der Ball offen ist, schaden tut es jedenfalls nicht.
\begin{align*}
 \abs{x - y} \leq \dist(y , Q_j) + \dist(Q_j , Q_j^*) + \diam(Q_j) + \diam(Q_j^*) \lesssim (1 + B + (A B)^{-1}) \diam(Q_j).
\end{align*}
%TODO hier wieder constraints an A und B falls delta endlich.
If $\delta$ is finite, we can ensure $|x-y|<\delta$ using exactly the same argument as in the proof of Lemma~\ref{Lem: Building chains} and otherwise this condition is again meaningless.
%Recall that by definition of $\WW_e$ it holds $\diam (Q_j) \leq A \delta$. Choosing $B$ large enough and $A$ small enough delivers  then that $\lvert x - y \rvert < \delta$.
Let $\gamma_2$ be the path connecting $x$ and $y$ subject to Assumption~\ref{Ass: Epsilon-delta assumption} and let $Q \in \WW (\overline{\Gamma})$ with $y \in Q$. Since $\dist(Q , \Gamma) \geq C \diam(Q_j^*)$ for some $C > 0$ depending only on $\eps$, $K$, $d$, $A$, and $B$, one concludes as in the proof of Lemma~\ref{Lem: Building chains} that the path $\gamma_2$, and hence, by the consideration above, also the path $\gamma = \gamma_1 + \gamma_2$ which connects $x\in Q_j^\ast$ with $z\in Q_j$ satisfies the assumptions of Lemma~\ref{Lem: Basic chain lemma}, where $Q_j^\ast$ fulfills the role of $R_1$ and $R_2$ is some cube in $\WW(\overline{\Gamma})$ that contains $z$. Note that the constants appearing in Lemma~\ref{Lem: Basic chain lemma} depend only on $\eps$, $K$, $d$, $A$, and $B$. \par
As in the statement of the lemma we write $S_m$ for $R_2$ and distinguish cases for the relation between $S_m$ and $Q_j$. Since $Q_j \cap S_m \neq \emptyset$ and since Whitney cubes are dyadic, it either holds $S_m \subseteq Q_j$ or $Q_j \subseteq S_m$. If $Q_j \subseteq S_m$ the proof is finished. If $S_m \subseteq Q_j$, then
\begin{align*}
 4 \diam(S_m) \geq \dist(S_m , \Gamma) \geq \dist(Q_j , \Gamma) \geq 36^{-1} B \diam(Q_j),
\end{align*}
so that $\abs{S_m \cap Q_j} \gtrsim \diam(Q_j)^d$.
\end{proof}

The next lemma shows that for a fixed cube $R \in \WW_i$, there are only finitely many cubes in $\WW_e$, whose reflected cube is $R$.

\begin{lemma}
\label{Lem: Number of preimages to reflected cube}
There is a constant $C \in \IN$ such that for each $R \in \WW_i$ there are at most $C$ cubes $Q \in \WW_e$ such that $Q^* = R$, where $C$ solely depends on $d$, $K$, $A$, $B$, and $\eps$.
\end{lemma}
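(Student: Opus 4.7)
The plan is to combine the two quantitative conclusions of Lemma~\ref{Lem: Reflection of cubes} with a volumetric counting argument. The key observation is that each cube $Q \in \WW_e$ with $Q^* = R$ is trapped both in diameter and in location.

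First, I would note that Lemma~\ref{Lem: Reflection of cubes} gives
\begin{align*}
\diam(Q) \leq \diam(R) \leq c_1 \bigl(1 + B + (AB)^{-1}\bigr) \diam(Q)
\end{align*}
for a constant $c_1=c_1(d,K,\eps)$, so all preimages $Q$ of $R$ have diameters comparable to $\diam(R)$ with constants depending only on $d$, $K$, $\eps$, $A$, $B$. Moreover, the same lemma yields
\begin{align*}
\dist(Q,R) \leq c_2\bigl(1+B+(AB)^{-1}\bigr)\diam(Q) \lesssim \diam(R).
\end{align*}
Fixing any point $x_R \in R$, this confines every such $Q$ to a single ball $\B(x_R, C \diam(R))$ whose radius depends only on $d$, $K$, $\eps$, $A$, $B$.

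Second, I would invoke disjointness: all cubes in $\WW(\cl{\Omega})$, and in particular all $Q \in \WW_e$, have pairwise disjoint interiors by construction of the Whitney decomposition. Since $|Q| = (\diam(Q)/\sqrt{d})^d \gtrsim \diam(R)^d$ with an implicit constant of the same dependence, the number of pairwise essentially disjoint preimages fitting inside $\B(x_R, C\diam(R))$ is controlled by the volume ratio
\begin{align*}
\frac{|\B(x_R, C\diam(R))|}{\min_Q |Q|} \leq C',
\end{align*}
and $C'$ depends only on $d$, $K$, $\eps$, $A$, $B$, as required.

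I do not anticipate any real obstacle: the statement is an immediate packing consequence of Lemma~\ref{Lem: Reflection of cubes}, and no further geometric input is needed. The only mildly delicate point is bookkeeping the dependence of the constants on $A$ and $B$ through the factor $1 + B + (AB)^{-1}$, but since these are regarded as fixed parameters in the statement, this just gets absorbed into $C$.
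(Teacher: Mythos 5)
Your argument is correct and is essentially identical to the paper's proof: both use the two quantitative conclusions of Lemma~\ref{Lem: Reflection of cubes} to confine all preimages $Q$ to a ball of radius comparable to $\diam(R)$ while keeping $\diam(Q) \gtrsim \diam(R)$, and then finish with disjointness of the Whitney cubes and a volume count.
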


\begin{proof}
Let $\alpha$ denote the constant from~\eqref{Eq: Estimate
  distance of reflected cube} and let $Q\in \WW_e$ with reflected cube $R$, then it follows with~\eqref{Eq:
  Comparison of cube and reflected cube} that $\dist(R,Q) \leq \alpha \diam(R)$. So, if $x_R$ denotes the center of $R$,
every cube $Q$ with $Q^* = R$ must be contained in $\B(x_R , (\alpha +
\frac{3}{2}) \diam(R))$. Because for those cubes $\diam(Q)$ is
controlled from below by $\diam(R)$ according to~\eqref{Eq:
  Comparison of cube and reflected cube} and because cubes from
$\WW_e$ have disjoint interiors, the lemma follows by a counting argument.
\end{proof}

%%%%%%%%%%%%%%%%%%%%%%%%%%%%%%%%%%%%%%%%%%%%%%%%%%%%%%%%%%%%%%%%%%%%%%%%%%%%%%%%%%%%%%%%%%%%%%%%%%%%%%%%%%%%%%%%%%%%%%%%%%%%%%%%%%%%%%%%%%%%%%%%%%%%%%%%%%%%%%%%%%%%
\section{Construction of the extension operator and exterior estimates}
\label{Sec: The extension operator}

\noindent This section is devoted to the construction of the extension operator from Theorem~\ref{Thm: Extension theorem for large radius}. We also establish estimates for the operator on $\overline{\Omega}^c$. To do so, we start with a preparatory part on (adapted) polynomials, followed by some overlap considerations. We proceed with the construction of the extension operator, in which the adapted polynomials will appear, followed by the exterior estimates, for which we will need the results on overlap.

\begin{agreement}
If not otherwise mentioned, the symbols $k$ and $p$ are supposed to refer to these parameters in Theorem~\ref{Thm: Extension theorem for large radius}. The numbers $A$ and $B$ which were introduced in Section~\ref{Sec: Cubes and chains} will be considered as fixed numbers depending only on $\eps$, $d$, $K$, and $\lambda$ such that all statements in Section~\ref{Sec: Cubes and chains} are valid. From now on we will use the symbols $\lesssim$ and $\gtrsim$ in a more liberal way than described in Agreement~\ref{Ag: Cube Sec}.
\end{agreement}

\subsection*{Polynomial fitting and Poincar\'e type estimates}
We record some results on polynomial approximation and Poincar\'e type estimates. Most of them stem from~\cite{Chua} and where used therein for a similar purpose.

We start out with the following generic norm comparison lemma for polynomials of fixed degree, see~\cite[Lem.~2.3]{Chua}.
\begin{lemma}
\label{Lem: Polynomial norm on different cubes}
	Let $Q,R$ be cubes with $R\subseteq Q$ and assume that there exists a constant $\kappa>0$ such that $|R| \geq \kappa |Q|$. Then for each polynomial $P$ of degree at most $m$ one has
	\begin{align*}
		\| P \|_{\L^p(Q)} \lesssim \| P \|_{\L^p(R)},
	\end{align*}
	where the implicit constant does only depend on $d$, $\kappa$, $p$, and $m$. In particular, if $S$ is another cube with $S\subseteq Q$ and $|S| \geq \kappa |Q|$, then the $\L^p$ norms over $R$ and $S$ are equivalent norms on $\mathcal{P}_{m}$ and the implicit constants do only depend on $d$, $\kappa$, $p$, and $m$.
\end{lemma}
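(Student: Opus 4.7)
The plan is to reduce to the unit cube via an affine change of variables, and then establish the key inequality on the unit cube using a compactness argument based on the finite-dimensionality of $\mathcal{P}_m$.

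First, I would fix the reference cube $Q_0 = [0,1]^d$ and, for a given cube $Q$ of sidelength $\ell$, consider the affine map $\phi_Q \colon Q_0 \to Q$, $y \mapsto x_Q + \ell y$, where $x_Q$ is a corner of $Q$. Change of variables yields $\|P\|_{\L^p(Q)}^p = \ell^d \|\tilde P\|_{\L^p(Q_0)}^p$ and $\|P\|_{\L^p(R)}^p = \ell^d \|\tilde P\|_{\L^p(R')}^p$, where $\tilde P \coloneqq P \circ \phi_Q \in \mathcal{P}_m$ and $R' \coloneqq \phi_Q^{-1}(R) \subseteq Q_0$ is a cube with $|R'| \geq \kappa$. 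Thus it suffices to prove that $\|\tilde P\|_{\L^p(Q_0)} \lesssim \|\tilde P\|_{\L^p(R')}$ holds uniformly over all $\tilde P \in \mathcal{P}_m$ and all cubes $R' \subseteq Q_0$ with $|R'| \geq \kappa$, with an implicit constant depending only on $d,\kappa,p,m$.

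For the uniform estimate on $Q_0$, I would use the fact that $\mathcal{P}_m$ is finite-dimensional and argue by compactness. On this space, $\|\cdot\|_{\L^p(Q_0)}$ is a norm, and since any cube $R' \subseteq Q_0$ with $|R'|\geq \kappa$ has positive measure, $\|\cdot\|_{\L^p(R')}$ is a norm on $\mathcal{P}_m$ as well (a polynomial that vanishes on a set of positive measure vanishes identically). Suppose there is no uniform constant. Then there exist $\tilde P_n \in \mathcal{P}_m$ with $\|\tilde P_n\|_{\L^p(Q_0)}=1$ and cubes $R_n' \subseteq Q_0$ with $|R_n'| \geq \kappa$ such that $\|\tilde P_n\|_{\L^p(R_n')} \to 0$. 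By finite-dimensionality, a subsequence of $\tilde P_n$ converges in $\mathcal{P}_m$ to some $\tilde P$ with $\|\tilde P\|_{\L^p(Q_0)}=1$. The sidelengths of the $R_n'$ lie in $[\kappa^{1/d},1]$ and their centers are in $Q_0$, so after passing to another subsequence they converge (in Hausdorff distance) to a cube $R' \subseteq Q_0$ with $|R'|\geq \kappa$. Since $\chi_{R_n'}\to \chi_{R'}$ almost everywhere and the polynomials $\tilde P_n$ converge uniformly on $Q_0$, dominated convergence gives $\|\tilde P\|_{\L^p(R')}=0$, hence $\tilde P \equiv 0$, contradicting $\|\tilde P\|_{\L^p(Q_0)}=1$.

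The second assertion follows immediately: under the hypotheses on $R$ and $S$, the main inequality gives both $\|P\|_{\L^p(Q)} \lesssim \|P\|_{\L^p(R)}$ and $\|P\|_{\L^p(Q)} \lesssim \|P\|_{\L^p(S)}$, with the reverse directions being trivial inclusions; chaining these yields the equivalence of the two norms with constants depending only on $d,\kappa,p,m$. I do not anticipate a significant obstacle here — the only delicate point is ensuring that the convergence of the cubes $R_n'$ together with the (automatically uniform) convergence of the polynomials $\tilde P_n$ on $Q_0$ suffices to pass to the limit in the $\L^p(R_n')$ norms, which is handled by dominated convergence applied to $|\tilde P_n|^p \chi_{R_n'}$.
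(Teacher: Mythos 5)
Your proof is correct. Note that the paper itself does not supply an argument for this lemma --- it is attributed to Chua~\cite[Lem.~2.3]{Chua} --- so the self-contained compactness argument you give is an acceptable substitute rather than a reproduction of the paper's route.

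Two small remarks. First, the change-of-variables identity you write, $\|P\|_{\L^p(Q)}^p=\ell^d\|\tilde P\|_{\L^p(Q_0)}^p$, presupposes $p<\infty$; since the lemma is invoked later in the paper also for $p=\infty$ (e.g.\ in Lemma~\ref{Lem: Polynomial Poincare} and Corollary~\ref{Cor: Poincare on subcubes}), you should either observe that for $p=\infty$ the scaling factor simply disappears and the same compactness argument runs through (Hausdorff convergence of $R_n'$ plus continuity of $\tilde P$ gives $\|\tilde P\|_{\L^\infty(R_n')}\to\|\tilde P\|_{\L^\infty(R')}$), or derive the $p=\infty$ case from the $p=1$ case by inverse H\"older on $\mathcal P_m$. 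Second, in the compactness step, after passing to a subsequence one should record that the limit cube $R'$ is contained in $Q_0$ (because $Q_0$ is closed) and that $|R'|\ge\kappa$ (the sidelength passes to the limit); you mention these in passing, and they are indeed both true, so the argument closes. Everything else --- the affine reduction, the equivalence of norms on the finite-dimensional space $\mathcal P_m$ giving uniform convergence, the use of $\chi_{R_n'}\to\chi_{R'}$ a.e.\ and dominated convergence, and the fact that a polynomial vanishing on a set of positive measure vanishes identically --- is sound, and the final sentence correctly deduces the two-cube equivalence by chaining the main estimate with the trivial inclusion $\|P\|_{\L^p(R)}\le\|P\|_{\L^p(Q)}$.
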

The following lemma provides \enquote{adapted} polynomials together with corresponding Poincar\'e type estimates. A proof can be found in~\cite[Thm.~4.5, Thm.~4.7 \& Rem.~4.8]{Chua}, note that the proof of the remark still works when replacing Whitney cubes by cubes of the same size.
\begin{lemma}
\label{Lem: Polynomial Poincare}
	Let $Q$ be a cube, $R$ a touching cube of $Q$ of the same size, and $k\geq 0$ an integer. Then there exists a projection $P: \L^1(Q) \to \mathcal{P}_{k-1}$ that satisfies the estimate
	\begin{align}
	\label{Eq: Polynomial Proj Estimate}
		\| \partial^\alpha Pf \|_{\L^p(Q)} \lesssim \| \partial^\alpha f \|_{\L^p(Q)}
	\end{align}
	for $f\in \C_D^\infty(\IR^d) \cap \W^{k,p}(Q)$, $|\alpha| \leq k$ and $1\leq p \leq \infty$. Moreover, the Poincar\'e type estimate
	\begin{align}
	\label{Eq: Polynomial Proj Poincare}
		\| \partial^\alpha (f-Pf) \|_{\L^p(Q\cup R)} \lesssim \diam(Q)^{\ell-|\alpha|} \| \nabla^\ell f \|_{\L^p(Q\cup R)},
	\end{align}
	holds for $f\in \C_D^\infty(\IR^d) \cap \W^{k,p}(Q\cup R)$, $0\leq \ell \leq k$, $|\alpha| \leq \ell$, and $1\leq p \leq \infty$.
	The implicit constants depend only on $d$, $k$, and $p$. Of course, the case $Q=R$ is also permitted.
\end{lemma}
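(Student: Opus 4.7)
The plan is to realize $P$ as an averaged Taylor polynomial. Concretely, fix a weight $\psi\in\C_c^\infty(Q)$ with $\int_Q\psi(y)\,\d y=1$ and $\|\partial^\gamma\psi\|_{\L^\infty}\lesssim\diam(Q)^{-d-|\gamma|}$ for every multi-index $\gamma$, and define
\begin{align*}
	Pf(x) := \int_Q \psi(y)\sum_{|\beta|\leq k-1}\frac{\partial^\beta f(y)}{\beta!}(x-y)^\beta\,\d y.
\end{align*}
An integration by parts $|\beta|$ times on each summand moves the $y$-derivatives off of $f$ and produces the kernel representation $Pf(x)=\int_Q K(x,y)f(y)\,\d y$, where $K$ is smooth with $|\partial^\alpha_x K(x,y)|\lesssim\diam(Q)^{-d-|\alpha|}$. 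This extends $P$ to $\L^1(Q)$, and a direct check gives $P\pi=\pi$ for every $\pi\in\mathcal{P}_{k-1}$, so $P$ is indeed a projection.

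To prove~\eqref{Eq: Polynomial Proj Estimate}, I would differentiate under the integral sign and use the identity $\partial^\alpha_x T^{k-1}_y f(x)=T^{k-1-|\alpha|}_y(\partial^\alpha f)(x)$ for Taylor polynomials to get
\begin{align*}
	\partial^\alpha Pf(x)=\int_Q\psi(y)\,T^{k-1-|\alpha|}_y(\partial^\alpha f)(x)\,\d y,
\end{align*}
then integrate by parts once more to obtain $\partial^\alpha Pf(x)=\int_Q K_\alpha(x,y)\partial^\alpha f(y)\,\d y$ with $|K_\alpha(x,y)|\lesssim\diam(Q)^{-d}$. Schur's test applied to this representation then yields the desired bound; the case $|\alpha|=k$ is trivial since $Pf\in\mathcal{P}_{k-1}$.

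For~\eqref{Eq: Polynomial Proj Poincare} the plan is as follows. Let $P_\ell$ denote the analogous operator with inner Taylor polynomial truncated at degree $\ell-1$. Since $P$ fixes $\mathcal{P}_{\ell-1}$, one has $f-Pf=(I-P)(f-P_\ell f)$, hence
\begin{align*}
	\|\partial^\alpha(f-Pf)\|_{\L^p(Q)}\leq\|\partial^\alpha(f-P_\ell f)\|_{\L^p(Q)}+\|\partial^\alpha P(f-P_\ell f)\|_{\L^p(Q)}.
\end{align*}
The first summand is controlled by a Bramble--Hilbert argument: Taylor's formula with integral remainder represents $\partial^\alpha(f-P_\ell f)$ as an average of integrals of $\nabla^\ell f$ along line segments in the convex cube $Q$, weighted by a Riesz-type kernel of order $\ell-|\alpha|$, and yields $\lesssim\diam(Q)^{\ell-|\alpha|}\|\nabla^\ell f\|_{\L^p(Q)}$. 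For the second summand, the stronger kernel bound $|\partial^\alpha_x K(x,y)|\lesssim\diam(Q)^{-d-|\alpha|}$ gives $\|\partial^\alpha Pg\|_{\L^p(Q)}\lesssim\diam(Q)^{-|\alpha|}\|g\|_{\L^p(Q)}$, which combined with the $|\alpha|=0$ instance $\|f-P_\ell f\|_{\L^p(Q)}\lesssim\diam(Q)^\ell\|\nabla^\ell f\|_{\L^p(Q)}$ delivers the same bound.

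The main obstacle is the final step: extending the estimate from $Q$ to the union $Q\cup R$, since $P$ only sees data on $Q$. Because $Q$ and $R$ are touching cubes of the same side length, the matching faces must coincide, so $Q\cup R$ is a convex rectangular brick. For the Taylor remainder piece on $R$, the line segments connecting $x\in R$ with $y\in Q$ therefore stay inside $Q\cup R$, and the Bramble--Hilbert argument from the previous paragraph runs verbatim with $Q\cup R$ replacing $Q$, producing $\diam(Q)^{\ell-|\alpha|}\|\nabla^\ell f\|_{\L^p(Q\cup R)}$. For the polynomial piece $\partial^\alpha P(f-P_\ell f)\in\mathcal{P}_{k-1-|\alpha|}$, I would embed $Q\cup R$ in an axis-parallel cube $Q^\ast$ of side length $2\diam(Q)$ in which $Q$ and $R$ each occupy a fixed fraction of the volume, and then apply Lemma~\ref{Lem: Polynomial norm on different cubes} inside $Q^\ast$ to transport the $\L^p$ norm from $Q$ to $R$. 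Summing these two contributions gives~\eqref{Eq: Polynomial Proj Poincare} on $Q\cup R$ and completes the proof.
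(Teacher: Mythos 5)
The paper does not prove this lemma; it outsources the argument to Chua \cite[Thm.~4.5, Thm.~4.7 \& Rem.~4.8]{Chua}, noting only that Chua's proof of the remark still works after replacing a Whitney pair by two touching cubes of the same size. Your proposal is correct and reconstructs essentially the same averaged Taylor polynomial (Bramble--Hilbert) argument that Chua uses, while also explicitly handling the two points the paper flags as needing care: the trivial $|\alpha|=k$ case (where $Pf\in\mathcal{P}_{k-1}$ makes $\partial^\alpha Pf=0$), and the passage from $Q$ to $Q\cup R$ by exploiting convexity of the union of two same-size touching cubes together with Lemma~\ref{Lem: Polynomial norm on different cubes} to transfer polynomial norms.
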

\begin{remark}
\label{Rem: Adapted Polynomial}
\begin{enumerate}
	\item The polynomial $Pf$ will be denoted by $(f)_Q$. The case $|\alpha|=k$ in~\eqref{Eq: Polynomial Proj Estimate} was not stated in~\cite{Chua} but follows since the degree of $Pf$ is at most $k-1$.
	\item That the projection is always meaningfully defined on $\L^1(Q)$ becomes evident from $(4.2)$ in~\cite{Chua}.
	\item In the case $\alpha=0$ we can drop the intersection with $\C_D^\infty(\IR^d)$ in both estimates in Lemma~\ref{Lem: Polynomial Poincare}. This follows from a direct computation using the representation formula for the projection given in~\cite{Chua}.
\end{enumerate}
	
\end{remark}
Combining these results gives a Poincar\'e type estimate where the polynomial is only adapted to a subcube of the domain of integration.
\begin{corollary}
\label{Cor: Poincare on subcubes}
	Let $Q$ and $R$ be cubes with $R\subseteq Q$ such that there is $\kappa>0$ with $|R| \geq \kappa |Q|$. Then with $f\in \C_D^\infty(\IR^d)\cap\W^{k,p}(Q)$, $0\leq \ell \leq k$, $|\alpha| \leq \ell$, and $1\leq p\leq \infty$ we obtain
	\begin{align*}
		\| \partial^\alpha (f-(f)_R) \|_{\L^p(Q)} \lesssim \diam(Q)^{\ell-|\alpha|} \| \nabla^\ell f \|_{\L^p(Q)},
	\end{align*}
	where the implicit constant does only depend on $d$, $k$, $p$, and $\kappa$. We may also replace $Q$ by the union of two touching cubes of the same size where one of them contains $R$ as a subcube.
\end{corollary}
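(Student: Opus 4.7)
The overall strategy is to reduce the desired Poincaré type estimate, which involves the polynomial $(f)_R$ adapted to the small subcube $R$, to the two ingredients already at hand: the Poincaré type estimate from Lemma~\ref{Lem: Polynomial Poincare} (which adapts a polynomial to the full cube on which one integrates) and the polynomial norm comparison from Lemma~\ref{Lem: Polynomial norm on different cubes} (which lets us compare polynomial norms over $Q$ and over $R$ up to a constant depending on $\kappa$). Concretely, I would write
\begin{align*}
 \partial^\alpha(f-(f)_R) = \partial^\alpha(f-(f)_Q) + \partial^\alpha((f)_Q-(f)_R),
\end{align*}
take $\L^p(Q)$ norms, and treat the two terms separately.

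For the first term I apply Lemma~\ref{Lem: Polynomial Poincare} in the degenerate case $Q=R$ permitted at the end of the lemma, which yields $\|\partial^\alpha(f-(f)_Q)\|_{\L^p(Q)} \lesssim \diam(Q)^{\ell-|\alpha|} \|\nabla^\ell f\|_{\L^p(Q)}$. For the second term I note that $(f)_Q - (f)_R$ lies in $\mathcal{P}_{k-1}$, so $\partial^\alpha\bigl((f)_Q - (f)_R\bigr)$ is still a polynomial of degree at most $k-1$. Since $R\subseteq Q$ with $|R|\geq \kappa |Q|$, Lemma~\ref{Lem: Polynomial norm on different cubes} gives $\|\partial^\alpha((f)_Q-(f)_R)\|_{\L^p(Q)} \lesssim \|\partial^\alpha((f)_Q-(f)_R)\|_{\L^p(R)}$ with a constant depending on $d,k,p,\kappa$. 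Another triangle inequality on $R$ bounds the right-hand side by $\|\partial^\alpha(f-(f)_Q)\|_{\L^p(R)} + \|\partial^\alpha(f-(f)_R)\|_{\L^p(R)}$. The first summand is estimated by the $\L^p(Q)$ bound already proved, while the second summand is controlled by a second application of Lemma~\ref{Lem: Polynomial Poincare} in the case $Q=R$, this time applied to the cube $R$, yielding $\|\partial^\alpha(f-(f)_R)\|_{\L^p(R)} \lesssim \diam(R)^{\ell-|\alpha|}\|\nabla^\ell f\|_{\L^p(R)} \lesssim \diam(Q)^{\ell-|\alpha|}\|\nabla^\ell f\|_{\L^p(Q)}$, where the last step uses $\diam(R)\leq \diam(Q)$ and $R\subseteq Q$. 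Collecting everything proves the estimate over $Q$.

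For the variant with $Q$ replaced by the union of two touching cubes of the same sidelength, say $Q\cup Q'$ with $R\subseteq Q$, I run precisely the same argument but apply Lemma~\ref{Lem: Polynomial Poincare} directly in its genuine touching-cubes form to the pair $(Q,Q')$, which bounds $\|\partial^\alpha(f-(f)_Q)\|_{\L^p(Q\cup Q')}$ by $\diam(Q)^{\ell-|\alpha|}\|\nabla^\ell f\|_{\L^p(Q\cup Q')}$. The only new point is the polynomial norm comparison between $Q\cup Q'$ and $R$: enclose $Q\cup Q'$ in an axis-parallel cube $\widetilde Q$ of sidelength $2\ell(Q)$; then $R\subseteq \widetilde Q$ and $|R|\geq \kappa|Q| = (\kappa/2^d)|\widetilde Q|$, so Lemma~\ref{Lem: Polynomial norm on different cubes} applied to $(\widetilde Q,R)$ yields $\|P\|_{\L^p(Q\cup Q')} \leq \|P\|_{\L^p(\widetilde Q)} \lesssim \|P\|_{\L^p(R)}$ for any polynomial $P$ of degree at most $k-1$. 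With this substitute in place, the same triangle/polynomial-fitting manipulation as above goes through verbatim.

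There is no real obstacle here; the argument is a two-fold triangle inequality wrapped around the two cited lemmas, and the only book-keeping worth emphasizing is that all the new implicit constants introduced (coming from Lemma~\ref{Lem: Polynomial norm on different cubes}) depend solely on $d$, $k$, $p$ and $\kappa$, which matches the claim. For the variant with two touching cubes the dependency on $\kappa$ enters through $\kappa/2^d$, which is again admissible.
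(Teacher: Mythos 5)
Your proof is correct and follows essentially the same strategy as the paper's: a two-fold triangle inequality around the Poincar\'e estimate of Lemma~\ref{Lem: Polynomial Poincare} and the polynomial norm comparison of Lemma~\ref{Lem: Polynomial norm on different cubes}, and your single-cube argument is word-for-word the paper's Step~1. For the two-cubes variant you deviate slightly: the paper splits $\L^p(Q_1\cup Q_2)$ into $\L^p(Q_1)+\L^p(Q_2)$, treats $Q_1$ by Step~1, and for $Q_2$ compares polynomial norms via $Q_1 \subseteq 3Q_2$, whereas you run Step~1 once on the full union, invoking the touching-cubes form of the Poincar\'e estimate directly and handling the norm comparison by enclosing $Q\cup Q'$ in a cube $\widetilde Q$ of sidelength $2\ell(Q)$ so that Lemma~\ref{Lem: Polynomial norm on different cubes} applies to the pair $(\widetilde Q, R)$ with $\kappa$ replaced by $\kappa/2^d$. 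This is a marginally more streamlined presentation of the same idea; both routes pass through the same pair of lemmas and yield the same constant dependence, so there is no substantive difference.
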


\begin{proof}
	\textbf{Step 1}. We start with the case that $Q$ is a single cube. Using Lemma~\ref{Lem: Polynomial Poincare} and Lemma~\ref{Lem: Polynomial norm on different cubes} we get from the fact that $\partial^\alpha ((f)_Q-(f)_R)$ is a polynomial of degree at most~$k-1$ the estimate
	\begin{align*}
		\| \partial^\alpha (f-(f)_R) \|_{\L^p(Q)} &\leq \| \partial^\alpha (f-(f)_Q) \|_{\L^p(Q)} + \| \partial^\alpha ((f)_Q-(f)_R) \|_{\L^p(Q)} \\
		&\lesssim \diam(Q)^{\ell-|\alpha|} \| \nabla^\ell f \|_{\L^p(Q)} + \| \partial^\alpha ((f)_Q-(f)_R) \|_{\L^p(R)}.
	\end{align*}
	The first term is already fine so we focus on the second one. Using that $R\subseteq Q$ and Lemma~\ref{Lem: Polynomial Poincare} twice, we estimate further
	\begin{align*}
		\| \partial^\alpha ((f)_Q-(f)_R) \|_{\L^p(R)} &\leq \| \partial^\alpha (f-(f)_Q) \|_{\L^p(R)} + \| \partial^\alpha (f-(f)_R) \|_{\L^p(R)} \\
		&\lesssim \| \partial^\alpha (f-(f)_Q) \|_{\L^p(Q)} + \diam(R)^{\ell-|\alpha|} \| \nabla^\ell f \|_{\L^p(R)} \\
		&\lesssim \diam(Q)^{\ell-|\alpha|} \| \nabla^\ell f \|_{\L^p(Q)}.
	\end{align*}
	\textbf{Step 2}. Now, assume that $Q=Q_1 \cup Q_2$ is the union of two touching cubes with $R\subseteq Q_1$ and $|R| \geq \kappa |Q_1|$. We reduce this case to the already shown case. Start with the triangle inequality to get
	\begin{align*}
		\| \partial^\alpha (f-(f)_R) \|_{\L^p(Q)} \leq \| \partial^\alpha (f-(f)_R) \|_{\L^p(Q_1)} + \| \partial^\alpha (f-(f)_R) \|_{\L^p(Q_2)}.
	\end{align*}
	The first term is fine by Step~1 and for the second we continue with
	\begin{align*}
		\| \partial^\alpha (f-(f)_R) \|_{\L^p(Q_2)} \leq \| \partial^\alpha (f-(f)_{Q_2}) \|_{\L^p(Q_2)} + \| \partial^\alpha ((f)_{Q_2}-(f)_R) \|_{\L^p(Q_2)}.
	\end{align*}
	Again, the first term is good and for the other one we exploit $Q_1 \subseteq 3 Q_2$ to derive with Lemma~\ref{Lem: Polynomial norm on different cubes}
	\begin{align*}
		 \| \partial^\alpha ((f)_{Q_2}-(f)_R) \|_{\L^p(Q_2)} &\lesssim \| \partial^\alpha ((f)_{Q_2}-(f)_R) \|_{\L^p(Q_1)} \\
		 &\leq \| \partial^\alpha (f-(f)_{Q_2}) \|_{\L^p(Q)} + \| \partial^\alpha (f-(f)_R) \|_{\L^p(Q_1)}.
	\end{align*}
	The first term is good by Lemma~\ref{Lem: Polynomial Poincare} and the second by Step~1.
\end{proof}

%\begin{proposition}
%\label{Prop: Poincare}
%Let $\Upsilon \subseteq \IR^d$ be open, bounded, and convex and $S \subseteq \Upsilon$ be measurable with $\lvert S \rvert > 0$. Then for all $1 \leq p \leq \infty$ the inequality
%\begin{align*}
%\|u - (u)_S\|_{\L^p(\Upsilon)} \leq \frac{\omega_d^{1 - \frac{1}{d}} \diam(\Upsilon)^d \lvert \Upsilon \rvert^{\frac{1}{d}}}{\lvert S \rvert} \|\nabla u\|_{\L^p(\Upsilon)} \qquad (u \in \W^{1 , p}(\Upsilon))
%\end{align*}
%holds true, where $\omega_d = \lvert \B(0 , 1) \rvert$.
%\end{proposition}

\subsection*{Some overlap considerations}
Let $Q_j \in \WW_e$ and let $Q \in \WW(\overline{\Omega}) \setminus \WW_e$ be such that it intersects a cube in $\WW_e$ and satisfies $\diam(Q) \leq A \delta$. Define
\begin{align*}
 F(Q_j) \coloneqq \bigcup_{\substack{Q_k \in \WW_e\\ Q_j \cap Q_k \neq \emptyset}} \bigcup_{S \in F_{j , k}} 2 S \qquad \text{and} \qquad F_P(Q) \coloneqq \bigcup_{\substack{Q_k \in \WW_e\\ Q \cap Q_k \neq \emptyset}} \bigcup_{S \in F_{P , k}} 2 S.
\end{align*}
We count how many of these \enquote{extended} chains $F(Q_j)$ and $F_P (Q)$ can intersect a fixed point $x \in \IR^d$. To be concise, we only present the case of $F(Q_j)$. 

By Lemma~\ref{Lem: Building chains}, we know that a chain $F_{j , k}$ has length less than a constant $M$ which does only depend on $d$, $K$, $\lambda$, and $\eps$. If $x \in F(Q_j)$, then there exist $k\in \IN$ and $S \in F_{j , k}$ with $x \in 2 S$. Assume $R \in \WW(\overline{\Gamma})$ is any cube such that also $x\in 2 R$.
By~\eqref{Enum: Distance property} and an elementary geometric consideration one infers for $z \in S$ that
\begin{align*}
 4 \diam(R) \geq \dist(R , \Gamma) \geq \dist(z , \Gamma) - \lvert x - z \rvert - \frac{3}{2} \diam(R).
\end{align*}
Pick some $z$ that satisfies $\lvert x - z \rvert \leq \diam(S) / 2$. Then
\begin{align*}
 4 \diam(R) \geq \dist(S, \Gamma) - \frac{1}{2} \diam(S) - \frac{3}{2} \diam(R) \geq \frac{1}{2} \diam(S) - \frac{3}{2} \diam(R).
\end{align*}
By symmetry (interchange $S$ and $R$) this implies that
\begin{align}
\label{Eq: Size extended chains}
 \frac{1}{11} \diam (S) \leq \diam (R) \leq 11 \diam (S).
\end{align}
%TODO ich fand die urspruengliche Formulierung vom Ende vom Beweis sehr schwer verstaendlich, deshalb habe ich mich mal an einer Umformulierung versucht, schau was du davon haelst, alte Version ist auskommentiert drunter.
Now let $F_{\alpha,\beta}$ be another chain such that $x\in \cup_{S\in F_{\alpha,\beta}} 2S$. This means that there is a cube in $F_{\alpha,\beta}$ that fulfills the role of $R$ above. Since $Q_\alpha^\ast$ and $R$ as well as $Q_j^\ast$ and $S$ are connected by touching chains of Whitney cubes each of length at most $M$, we deduce from~\eqref{Eq: Size extended chains} that $\diam(Q_\alpha^\ast) \approx \diam(Q_j^\ast)$ and conclude $\dist(Q_j^\ast, Q_\alpha^\ast) \lesssim \diam(Q_j^\ast)$. Then the usual counting argument yields a bound on such reflected cubes $Q_\alpha^\ast$. Finally, Lemma~\ref{Lem: Number of preimages to reflected cube} implies that there exists a constant $C > 0$ that depends only on $d$, $K$, $\lambda$, and $\eps$ such that
\begin{align}
\label{Eq: Nasty chain estimate}
 \sum_{Q_j \in \WW_e} \chi_{F(Q_j)} (x) \leq C.
\end{align}
%So, if $R \in F_{\alpha , \beta}$ for some $\alpha , \beta \in \IN$, then $R$ can be connected to $Q_{\alpha}^*$ by a touching chain of length at most $M$. Thus, by~\eqref{Eq: Size extended chains} and~\eqref{Enum: Comparison property} any cube $R^{\prime}$ in the chain $F_{\alpha, \beta}$ satisfies
%\begin{align*}
% 11^{-1} 4^{- 2 M} \leq \frac{\diam(Q_j^*)}{\diam (R^{\prime})} \leq 11 \cdot 4^{2 M} \qquad \text{and} \qquad  \dist(Q_j^* , R^{\prime}) \leq 11 (2 M + 1) 4^{2 M} \diam (Q_j^*).
%\end{align*}
%This combined with Lemma~\ref{Lem: Number of preimages to reflected cube} implies that there exists a constant $C > 0$ that depends only on $d$, $K$, and $\eps$ such that
%\begin{align}
%\label{Eq: Nasty chain estimate}
% \sum_{Q_j \in \WW_e} \chi_{F(Q_j)} (x) \leq C.
%\end{align}

\subsection*{Construction of the extension operator}
Fix an enumeration $(Q_j)_j$ of $\WW_e$ and take a partition of unity $(\varphi_j)_j$ on $\bigcup_{Q_j \in \WW_e} Q_j$ valued in $[0,1]$ and satisfying $\supp(\varphi_j) \subseteq \frac{17}{16} Q_j$ as well as $\| \partial^\alpha \varphi_j \|_{\L^{\infty}} \lesssim \diam(Q_j)^{-|\alpha|}$ for $|\alpha| \leq k$ and with an implicit constant only depending on $k$.

Let $f$ be a measurable function on $\Omega$ and $A\subseteq \IR^d$ closed. Write $E_A f$ for the zero extension of $f$ to $A$. Clearly, $E_A$ is isometric from $\L^p(A\cap \Omega)$ to $\L^p(A)$ for all $1\leq p \leq \infty$.
Moreover, if $f \in \C^\infty_D (\Omega) \cap \W^{k,p}(\Omega)$ and $A \cap \overline{\Gamma} = \emptyset$, then $E_A f$ is again in $\C^\infty_D(A) \cap \W^{k,p}(A)$. A relevant example is $A=Q\in \WW_i$. 
Note that then $\| \partial^\alpha E_A f \|_{\L^p (A)} = \| \partial^\alpha f\|_{\L^p (A \cap \Omega)}$ holds for any $|\alpha| \leq k$.

Recall the notation introduced in Remark~\ref{Rem: Adapted Polynomial}. Define the extension operator $E$ on some locally integrable $f$ by
%%RMB added x as an argument to Ef below
\begin{align*}
 E f(x) \coloneqq \begin{cases} f(x), &x \in \Omega,\\ 0, &x \in D,\\ \sum_{Q_j \in \WW_e} (E_{Q_j^*} f)_{Q_j^*}(x) \varphi_j (x), &x \in \overline{\Omega}^c. \end{cases}
\end{align*}
If $\WW_e$ is empty (which is the case if $D = \partial \Omega$ according to Remark~\ref{Rem: Neighborhood of Neumann boundary}) then the sum is empty and its value is considered to be zero.
\begin{remark}
\label{Rem: Definition of extension operator}
If $f$ is locally integrable on $\Omega$ then $E f$ is defined almost everywhere on $\IR^d$ according to Lemma~\ref{Lem: Measure of Neumann boundary}. Moreover, $Ef$ is smooth on $\overline{\Omega}^c$ by construction. Due to Remark~\ref{Rem: Adapted Polynomial}, $E$ restricts to a bounded operator from $\L^p(\Omega)$ to $\L^p(\IR^d)$ for all $1\leq p \leq \infty$. If $f \in \C^\infty_D (\Omega) \cap \W^{k,p}(\Omega)$ then $Ef$ vanishes almost everywhere around $D$. Indeed, this follows from the support assumption on $f$ and the fact that $Q_j^\ast$ is close to $D$ if $x$ is close to $D$, see~\eqref{Eq: Estimate distance of reflected cube}. 
\end{remark}

\subsection*{Estimates for the extension operator} We show estimates for the extension operator on different types of cubes. The overlap considerations from before will permit us to sum them up in Proposition~\ref{Prop: Boundedness estimate} to arrive at exterior estimates for the extension operator.

\begin{lemma}
\label{Lem: Estimating differences of averages}
Let $f \in \C^\infty_D (\Omega) \cap \W^{k,p}(\Omega)$, $0\leq \ell \leq k$, $|\alpha| \leq \ell$, and $1\leq p \leq \infty$.
If $S_1 , \dots , S_m$ is a touching chain of Whitney cubes with respect to $\cl{\Gamma}$ whose length is bounded by a constant $M$,
%provided either by Lemma~\ref{Lem: Building chains} or Lemma~\ref{Lem: Building Poincare chains}, 
then
\begin{align*}
\| \partial^\alpha ((E_{S_1} f)_{S_1} - (E_{S_m} f)_{S_m}) \|_{\L^p(S_1)} \lesssim \diam(S_1)^{\ell-|\alpha|} \|\nabla^\ell f\|_{\L^p(\bigcup_{r = 1}^m (2 S_r) \cap \Omega)},
\end{align*}
where the implicit constant does only depend on $d$, $k$, $p$, and $M$. The assertion remains true if the chain consists of cubes in $\Xi$ of fixed size (not necessarily Whitney cubes). In that case, the set $\bigcup_{r = 1}^m (2 S_r) \cap \Omega$ in the $\L^p$-norm on the right-hand side can be replaced by $\bigcup_{r = 1}^m S_r \cap \Omega$.
\end{lemma}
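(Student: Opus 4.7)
The proof proceeds by a telescoping argument. Set $P_r \coloneqq (E_{S_r} f)_{S_r}$, so that $P_1 - P_m = \sum_{r=1}^{m-1}(P_r - P_{r+1})$; the triangle inequality in $\L^p(S_1)$ reduces the task to bounding each $\| \partial^\alpha(P_r - P_{r+1}) \|_{\L^p(S_1)}$. Iterating~\eqref{Enum: Comparison property} along the chain of length $\leq M$ gives $\diam(S_r) \simeq \diam(S_1)$ uniformly in $r$, and the entire chain lies in a cube of size $\simeq \diam(S_1)$. Since $\partial^\alpha(P_r - P_{r+1})$ is a polynomial of degree at most $k-1-|\alpha|$, Lemma~\ref{Lem: Polynomial norm on different cubes} permits us to exchange the $\L^p(S_1)$-norm for the $\L^p$-norm on any nearby subcube of comparable measure.

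Let $\tilde f$ denote the zero extension of $f$ to $\IR^d$. Because $f \in \C_D^\infty(\Omega)$ vanishes in a neighborhood of $D$ and~\eqref{Enum: Distance property} yields $\dist(2 S_r, \cl{\Gamma}) > 0$, the function $\tilde f$ is smooth on $2 S_r \cup 2 S_{r+1} \subseteq \Xi$, agreeing with $E_{S_r} f$ on $S_r$ and with $E_{S_{r+1}} f$ on $S_{r+1}$. Assume without loss of generality $\diam(S_r) \leq \diam(S_{r+1})$ and select, by dyadicity, a subcube $\tilde S \subseteq S_{r+1}$ of sidelength $\ell(S_r)$ adjacent to $S_r$, so that $|\tilde S| \gtrsim |S_{r+1}|$ by~\eqref{Enum: Comparison property}. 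Applying Lemma~\ref{Lem: Polynomial Poincare} to the same-size touching pair $(S_r, \tilde S)$ and to $(S_{r+1}, S_{r+1})$, together with Corollary~\ref{Cor: Poincare on subcubes} on $Q = S_{r+1}$ with $R = \tilde S$, and combining via the triangle inequality through the intermediate polynomial $(\tilde f)_{\tilde S}$ on $\tilde S$, we obtain
$$
\| \partial^\alpha(P_r - P_{r+1}) \|_{\L^p(\tilde S)} \lesssim \diam(S_1)^{\ell - |\alpha|} \| \nabla^\ell \tilde f \|_{\L^p(S_r \cup S_{r+1})}.
$$

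Transferring the polynomial bound from $\tilde S$ to $S_1$ via Lemma~\ref{Lem: Polynomial norm on different cubes} and using that $\tilde f$ vanishes outside $\Omega$ on $2 S_r \cup 2 S_{r+1}$ yields $\| \partial^\alpha(P_r - P_{r+1}) \|_{\L^p(S_1)} \lesssim \diam(S_1)^{\ell - |\alpha|} \| \nabla^\ell f \|_{\L^p((2 S_r \cup 2 S_{r+1}) \cap \Omega)}$. Summing over $r = 1, \dots, m-1$ and using $m \leq M$ concludes the first statement. For the second statement, all cubes $S_r$ in the chain share a common sidelength, so one may take $\tilde S = S_{r+1}$, Lemma~\ref{Lem: Polynomial Poincare} applies directly to $(S_r, S_{r+1})$ with no enlargement required, and the same reasoning produces $\bigcup_r S_r \cap \Omega$ on the right-hand side. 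The main obstacle is the possibly unequal sidelengths of consecutive Whitney cubes, which forces the detour through the auxiliary subcube $\tilde S$ in order to apply Lemma~\ref{Lem: Polynomial Poincare} to a same-size pair; the global smoothness hypothesis of that lemma is met after multiplying $\tilde f$ by a smooth cutoff identically $1$ on the cube of interest and supported in $\Xi$ away from $D$.
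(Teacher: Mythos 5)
Your proof is correct and follows the same overall skeleton as the paper's (telescoping along the chain, passing polynomial norms between nearby cubes via Lemma~\ref{Lem: Polynomial norm on different cubes}, then inserting an intermediate polynomial and invoking Lemma~\ref{Lem: Polynomial Poincare} and Corollary~\ref{Cor: Poincare on subcubes}), but the way you handle a pair of touching Whitney cubes of unequal sizes is genuinely different. The paper inserts $(E_{S_r\cup S_{r+1}} f)_{S_r}$ as the intermediate polynomial and, when $\diam(S_{r+1})<\diam(S_r)$, observes $S_r\cup S_{r+1}\subseteq 2S_r$ and applies Corollary~\ref{Cor: Poincare on subcubes} to $E_{2S_r}f$ on the enlarged cube $2S_r$; this enlargement is where the factor $2$ in $\bigcup_r 2S_r$ originates. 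You instead shrink rather than enlarge: you select a dyadic subcube $\tilde S\subseteq S_{r+1}$ of the same sidelength as $S_r$ sharing a face with it, so that $(S_r,\tilde S)$ is a same-size touching pair, and compare the three projections $(\tilde f)_{S_r}$, $(\tilde f)_{\tilde S}$, $(\tilde f)_{S_{r+1}}$ on $\tilde S$. This is a legitimate alternative and is in fact slightly tighter: it never uses $2S_r$, so it actually establishes the first case of the lemma with $\bigcup_r S_r\cap\Omega$ on the right-hand side (the weaker formulation in the statement of course then follows a fortiori), whereas your final display conservatively reverts to $2S_r\cup 2S_{r+1}$. The only subtlety you need to be careful about — and you flag it correctly at the end — is that the zero extension $\tilde f$ is not globally smooth, so to invoke Lemma~\ref{Lem: Polynomial Poincare} literally (which asks for $\C_D^\infty(\IR^d)$) one must first truncate by a cutoff that is $1$ on $S_r\cup S_{r+1}$ and supported in $\Xi$; the paper instead sidesteps this by phrasing everything through the local extensions $E_A f\in\C_D^\infty(A)$. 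Both are fine, and the two routes buy essentially the same thing.
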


\begin{proof}
We focus on the case of Whitney cubes, the other case is even simpler.

Note first that the sizes of cubes from the chain are pairwise comparable due to the bound on the chain length.
Using Lemma~\ref{Lem: Polynomial norm on different cubes} (observe that the whole chain is contained in a comparably larger cube) we get
\begin{align*}
&\| \partial^\alpha ((E_{S_1} f)_{S_1} - (E_{S_m} f)_{S_m}) \|_{\L^p(S_1)} \\
\lesssim &\sum_{r = 1}^{m - 1} \| \partial^\alpha ((E_{S_r} f)_{S_r} - (E_{S_{r + 1}} f)_{S_{r + 1}}) \|_{\L^p (S_r)} \\
\lesssim &\sum_{r = 1}^{m - 1} \| \partial^\alpha ((E_{S_r} f)_{S_r} - (E_{S_r \cup S_{r + 1}} f)_{S_r}) \|_{\L^p (S_r)} + \| \partial^\alpha ((E_{S_r \cup S_{r + 1}} f)_{S_r}) - (E_{S_{r + 1}} f)_{S_{r + 1}}) \|_{\L^p (S_{r+1})} \\
 \leq &\sum_{r = 1}^{m - 1} \| \partial^\alpha ((E_{S_r} f)_{S_r} - E_{S_r} f) \|_{\L^p(S_r)} + \| \partial^\alpha (E_{S_r} f - (E_{S_r \cup S_{r + 1}} f)_{S_r}) \|_{\L^p (S_r)} \\
 &\qquad+ \| \partial^\alpha ((E_{S_r \cup S_{r + 1}} f)_{S_r} - E_{S_{r + 1}} f) \|_{\L^p (S_{r + 1})} + \| \partial^\alpha (E_{S_{r + 1}} f - (E_{S_{r + 1}} f)_{S_{r + 1}}) \|_{\L^p(S_{r + 1})} \\
 \lesssim &\sum_{r = 1}^{m - 1} \| \partial^\alpha ((E_{S_r} f)_{S_r} - E_{S_r} f) \|_{\L^p(S_r)} + \| \partial^\alpha (E_{S_r \cup S_{r + 1}} f - (E_{S_r \cup S_{r + 1}} f)_{S_r}) \|_{\L^p (S_r \cup S_{r + 1})} \\
 &\qquad+ \| \partial^\alpha ((E_{S_{r + 1}} f)_{S_{r + 1}} - E_{S_{r + 1}} f) \|_{\L^p(S_{r + 1})}.
\end{align*}
By virtue of Lemma~\ref{Lem: Polynomial Poincare}, the first and the last term in the sum on the right-hand side are controlled by $\diam(S_1)^{\ell-|\alpha|} \|\nabla^\ell f\|_{\L^p(S_r \cap \Omega)}$ and $\diam(S_1)^{\ell-|\alpha|} \|\nabla^\ell f\|_{\L^p(S_{r + 1} \cap \Omega)}$.

If $S_r$ and $S_{r+1}$ are of the same size, the second term can be controlled using Corollary~\ref{Cor: Poincare on subcubes}. Otherwise, assume without loss of generality that $\diam(S_{r + 1}) < \diam(S_r)$. Since the cubes are dyadic, it follows that $S_r \cup S_{r + 1} \subseteq 2 S_r$. Moreover,
\begin{align*}
 \dist(2 S_r , \Gamma) \geq \dist(S_r , \Gamma) - \frac{1}{2} \diam(S_r) \geq \frac{1}{2} \diam(S_r).
\end{align*}
So, $E_{2 S_r} f$ is a smooth extension of $E_{S_r \cup S_{r + 1}} f$ to $2 S_r$, in particular $(E_{S_r \cup S_{r + 1}} f)_{S_r} = (E_{2 S_r} f)_{S_r}$. Invoking Corollary~\ref{Cor: Poincare on subcubes} yields
\begin{align*}
&\| \partial^\alpha (E_{S_r \cup S_{r + 1}} f - (E_{S_r \cup S_{r + 1}} f)_{S_r}) \|_{\L^p (S_r \cup S_{r + 1})} \\ 
\leq{} &\| \partial^\alpha (E_{2 S_r} f - (E_{2 S_r} f)_{S_r}) \|_{\L^p (2 S_r)} \lesssim \diam(S_1)^{\ell-|\alpha|} \|\nabla^\ell f\|_{\L^p((2 S_r) \cap \Omega)}. \qedhere
\end{align*}
\end{proof}

\begin{lemma}
\label{Lem: Estimate on exterior cubes}
Let $f \in \C^\infty_D (\Omega) \cap \W^{k,p}(\Omega)$, $0\leq \ell \leq k$, $|\alpha| \leq \ell$, and $1\leq p \leq \infty$. If $Q_j \in \WW_e$, then
\begin{align*}
\| \partial^\alpha E f \|_{\L^p(Q_j)} \lesssim \diam(Q_j)^{\ell-|\alpha|} \| \nabla^\ell f \|_{\L^p(F(Q_j) \cap \Omega)} + \| \partial^\alpha f \|_{\L^p(Q_j^\ast \cap \Omega)}.
\end{align*}
\end{lemma}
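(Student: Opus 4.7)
The plan is to exploit the partition of unity on $Q_j$ to rewrite $Ef$ in a form where the \enquote{main} polynomial $(E_{Q_j^*}f)_{Q_j^*}$ is separated from small \enquote{corrections}. Concretely, since $\sum_k \varphi_k \equiv 1$ on $Q_j \subseteq \overline{\Omega}^c$, I would write, for $x \in Q_j$,
\begin{align*}
Ef(x) = (E_{Q_j^*}f)_{Q_j^*}(x) + \sum_{k} \varphi_k(x) \bigl[(E_{Q_k^*}f)_{Q_k^*} - (E_{Q_j^*}f)_{Q_j^*}\bigr](x),
\end{align*}
where the sum effectively runs only over those $k$ with $\tfrac{17}{16} Q_k \cap Q_j \neq \emptyset$; by the Whitney property \eqref{Enum: Bound Intersecting cubes}, there are at most finitely many such $k$ and moreover $\diam(Q_k) \simeq \diam(Q_j)$. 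Applying $\partial^\alpha$ to the first summand and the Leibniz rule to the second summand, then taking the $\L^p(Q_j)$-norm, reduces the task to estimating two types of contributions.

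For the \emph{main polynomial term}, I note that $(E_{Q_j^*}f)_{Q_j^*}$ is a polynomial of degree $\leq k-1$. Using Lemma~\ref{Lem: Reflection of cubes}, both $Q_j$ and $Q_j^*$ are contained in a common enclosing cube $\widetilde Q$ of sidelength comparable to $\diam(Q_j)$ with $|Q_j|, |Q_j^*| \simeq |\widetilde Q|$. Lemma~\ref{Lem: Polynomial norm on different cubes} then transfers the $\L^p$-norm from $Q_j$ to $Q_j^*$, and \eqref{Eq: Polynomial Proj Estimate} in Lemma~\ref{Lem: Polynomial Poincare} yields
\begin{align*}
\| \partial^\alpha (E_{Q_j^*}f)_{Q_j^*} \|_{\L^p(Q_j)} \lesssim \| \partial^\alpha (E_{Q_j^*}f)_{Q_j^*} \|_{\L^p(Q_j^*)} \lesssim \| \partial^\alpha E_{Q_j^*}f \|_{\L^p(Q_j^*)} = \| \partial^\alpha f \|_{\L^p(Q_j^* \cap \Omega)},
\end{align*}
which gives the second term on the right-hand side of the claim.

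For the \emph{correction terms}, the Leibniz rule produces factors of the form $\partial^\beta \varphi_k \cdot \partial^{\alpha-\beta} D_k$ with $D_k := (E_{Q_k^*}f)_{Q_k^*} - (E_{Q_j^*}f)_{Q_j^*}$. The bump estimate gives $\|\partial^\beta \varphi_k\|_{\L^\infty} \lesssim \diam(Q_j)^{-|\beta|}$, so it remains to bound $\|\partial^{\alpha-\beta} D_k\|_{\L^p(Q_j)}$. Since $D_k$ is a polynomial, the same polynomial-norm transfer as above (using that $Q_j, Q_j^*$, and all cubes of the chain $F_{j,k}$ sit in a common cube of sidelength $\simeq \diam(Q_j)$ with comparable measures, by Lemma~\ref{Lem: Building chains} and Lemma~\ref{Lem: Reflection of cubes}) moves the $\L^p$-norm to $Q_j^*$, after which Lemma~\ref{Lem: Estimating differences of averages} applied to the chain $F_{j,k} = \{Q_j^* = S_1, \ldots, S_m = Q_k^*\}$ yields
\begin{align*}
\| \partial^{\alpha-\beta} D_k \|_{\L^p(Q_j)} \lesssim \diam(Q_j)^{\ell - |\alpha - \beta|} \| \nabla^\ell f \|_{\L^p(\bigcup_{S \in F_{j,k}} 2S \cap \Omega)}.
\end{align*}
Multiplying by $\diam(Q_j)^{-|\beta|}$ gives the desired power $\diam(Q_j)^{\ell-|\alpha|}$, and summing over the finitely many $k$ and $\beta \leq \alpha$ collapses the union $\bigcup_k \bigcup_{S \in F_{j,k}} 2S$ into $F(Q_j)$, producing precisely the first term on the right-hand side.

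The main obstacle is purely bookkeeping: one must verify that $Q_j$, $Q_j^*$, and every $S\in F_{j,k}$ for a relevant $k$ are all simultaneously contained in some cube of sidelength $\simeq \diam(Q_j)$ on which they each occupy a fixed fraction of the volume, so that Lemma~\ref{Lem: Polynomial norm on different cubes} is applicable with constants under control. Once this is in place, the power counting $-|\beta| + (\ell - |\alpha - \beta|) = \ell - |\alpha|$ makes the scaling work out automatically, and the proof closes.
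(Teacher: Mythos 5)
Your proposal is correct and follows essentially the same route as the paper's proof: write $Ef$ on $Q_j$ as $(E_{Q_j^*}f)_{Q_j^*}$ plus the sum $\sum_k \varphi_k[(E_{Q_k^*}f)_{Q_k^*}-(E_{Q_j^*}f)_{Q_j^*}]$ using $\sum_k\varphi_k\equiv 1$, bound the polynomial term via Lemma~\ref{Lem: Polynomial norm on different cubes} and~\eqref{Eq: Polynomial Proj Estimate}, and bound the correction terms via Leibniz, the bump estimates, polynomial-norm transfer, and Lemma~\ref{Lem: Estimating differences of averages}. The only cosmetic difference is that the paper moves the $\L^p$-norm directly to $Q_j^*$ before invoking Lemma~\ref{Lem: Estimating differences of averages} (whose conclusion is already stated on $S_1=Q_j^*$), whereas you phrase the transfer as a separate bookkeeping step, but the argument is the same.
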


\begin{proof}
Observe that $\varphi_k$ vanishes on $Q_j$ if $Q_k\cap Q_j = \emptyset$. Hence, by definition it holds $E f = \sum_{\substack{Q_k\in \WW_e\\Q_j \cap Q_k \neq \emptyset}} (E_{Q_k^*} f)_{Q_k^*} \varphi_k$ and $\sum_{\substack{Q_k\in \WW_e\\Q_j \cap Q_k \neq \emptyset}} \varphi_k \equiv 1$ on $Q_j$. Consequently, using the Leibniz rule we get
\begin{align*}
 & \| \partial^\alpha Ef \|_{\L^p(Q_j)} \\
 \leq{} & \Big\| \sum_{\substack{Q_k\in \WW_e\\Q_j \cap Q_k \neq \emptyset}} \sum_{\beta \leq \alpha} c_{\alpha,\beta} \partial^{\alpha-\beta} \big[ (E_{Q_k^*} f)_{Q_k^*} - (E_{Q_j^*} f)_{Q_j^*} \big] \partial^\beta \varphi_k \Big\|_{\L^p(Q_j)} + \|\partial^\alpha (E_{Q_j^\ast} f)_{Q_j^\ast} \|_{\L^p(Q_j)} \\
 =:{} &\mathrm{I} + \mathrm{II}.
\end{align*}
We employ the estimate for $\partial^\beta \varphi_k$ and Lemma~\ref{Lem: Polynomial norm on different cubes} (taking Lemma~\ref{Lem: Reflection of cubes} into account), followed by Lemma~\ref{Lem: Estimating differences of averages} and~\eqref{Enum: Bound Intersecting cubes} to derive
\begin{align*}
 \mathrm{I} &\lesssim \sum_{\substack{Q_k\in \WW_e\\Q_j \cap Q_k \neq \emptyset}} \sum_{\beta \leq \alpha} \diam(Q_k)^{-|\beta|} \| \partial^{\alpha-\beta} \bigl[ (E_{Q_k^*} f)_{Q_k^*} - (E_{Q_j^*} f)_{Q_j^*} \bigr] \|_{\L^p(Q_j^*)} \\
 &\lesssim \diam(Q_j)^{\ell-|\alpha|} \| \nabla^\ell f \|_{\L^p(F(Q_j) \cap \Omega)}.
\end{align*}
The term $\mathrm{II}$ is controlled by $\| \partial^\alpha f \|_{\L^p(Q_j^\ast \cap \Omega)}$ using~\eqref{Eq: Polynomial Proj Estimate} from Lemma~\ref{Lem: Polynomial Poincare}; Note that we can switch to $Q_j^\ast$ using Lemma~\ref{Lem: Polynomial norm on different cubes} as in the estimate for term $\mathrm{I}$.
\end{proof}

\begin{lemma}
\label{Lem: Estimate on small adjacent cubes}
Let $f \in \C^\infty_D (\Omega) \cap \W^{k,p}(\Omega)$, $0\leq \ell \leq k$, $|\alpha| \leq \ell$, and $1\leq p\leq \infty$. If $Q \in \WW(\overline{\Omega}) \setminus \WW_e$ intersects a cube in $\WW_e$ and satisfies $\diam(Q) \leq A \delta$, then
\begin{align*}
 \| \partial^\alpha E f \|_{\L^p(Q)} \lesssim \diam(Q)^{\ell-|\alpha|} \| \nabla^\ell f \|_{\L^p(F_P(Q) \cap \Omega)}.
\end{align*}
\end{lemma}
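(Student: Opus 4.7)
The plan is to adapt the proof of Lemma~\ref{Lem: Estimate on exterior cubes}. The essential difference is that, since $Q \notin \WW_e$, the partition-of-unity sum $\sum_k \varphi_k$ need not equal $1$ on $Q$, so one cannot subtract a single reference polynomial and collapse the sum. Instead, each polynomial $(E_{Q_k^*}f)_{Q_k^*}$ appearing in $Ef$ on $Q$ must be estimated individually, and the Poincar\'e chains supplied by Lemma~\ref{Lem: Building Poincare chains} are tailor-made for this. Concretely, the Leibniz rule together with $\|\partial^\beta \varphi_k\|_{\L^\infty} \lesssim \diam(Q_k)^{-|\beta|}$ reduces the task to bounding $\|\partial^\gamma (E_{Q_k^*}f)_{Q_k^*}\|_{\L^p(Q)}$ for the finitely many $Q_k \in \WW_e$ with $\tfrac{17}{16}Q_k \cap Q \neq \emptyset$; their number is controlled by \eqref{Enum: Bound Intersecting cubes}, they all satisfy $\diam(Q_k) \simeq \diam(Q)$ by \eqref{Enum: Comparison property}, and hence $\diam(Q_k^*) \simeq \diam(Q)$ by Lemma~\ref{Lem: Reflection of cubes}.

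For each such $k$ I fix the Poincar\'e chain $F_{P,k} = \{Q_k^* = S_1, \dots, S_m\}$ from Lemma~\ref{Lem: Building Poincare chains} and split
\begin{align*}
(E_{Q_k^*}f)_{Q_k^*} \;=\; \bigl[(E_{Q_k^*}f)_{Q_k^*} - (E_{S_m}f)_{S_m}\bigr] \;+\; (E_{S_m}f)_{S_m}.
\end{align*}
The first bracket is a polynomial; by Lemma~\ref{Lem: Polynomial norm on different cubes} its $\L^p(Q)$-norm is comparable to its $\L^p(Q_k^*)$-norm, as the chain together with $Q$ lies inside a common container of diameter $\simeq \diam(Q)$, and then Lemma~\ref{Lem: Estimating differences of averages} controls its $\partial^\gamma$ by $\diam(Q)^{\ell-|\gamma|}\|\nabla^\ell f\|_{\L^p(\bigcup_r 2 S_r \cap \Omega)}$. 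For the second term I again apply Lemma~\ref{Lem: Polynomial norm on different cubes} to replace $Q$ by the dyadic subcube $S_m \cap Q_k$, which by Lemma~\ref{Lem: Building Poincare chains} satisfies $|S_m \cap Q_k| \gtrsim \diam(Q)^d$. Crucially, $S_m \cap Q_k \subseteq Q_k \subseteq \overline{\Omega}^c$, so the zero extension $E_{S_m}f$ vanishes identically on this set; on it
\begin{align*}
\partial^\gamma (E_{S_m}f)_{S_m} \;=\; -\partial^\gamma\bigl[E_{S_m}f - (E_{S_m}f)_{S_m}\bigr],
\end{align*}
and passing to $\L^p(S_m)$ and invoking \eqref{Eq: Polynomial Proj Poincare} yields the bound $\diam(Q)^{\ell-|\gamma|}\|\nabla^\ell f\|_{\L^p(S_m \cap \Omega)}$. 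The scales then combine to $\diam(Q_k)^{-|\beta|}\,\diam(Q)^{\ell-|\alpha-\beta|} \simeq \diam(Q)^{\ell-|\alpha|}$, and summing over the bounded number of $k$ produces the claimed norm over $F_P(Q) \cap \Omega$.

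The main obstacle is this last step: it is the first place in the construction where the chain \emph{escapes} $\Omega$ through the Dirichlet part $D$ rather than returning into $\Omega$, and it is precisely the vanishing of $E_{S_m}f$ on $S_m \cap Q_k$ (a feature built into Lemma~\ref{Lem: Building Poincare chains}) that allows a polynomial quantity to be absorbed into a Poincar\'e error. The one technicality is justifying \eqref{Eq: Polynomial Proj Poincare} for $E_{S_m}f$ on $S_m$ despite the $\C_D^\infty(\IR^d)$ hypothesis in Lemma~\ref{Lem: Polynomial Poincare}: since $S_m \in \WW(\overline{\Gamma})$ is disjoint from $\overline{\Gamma}$, any boundary point of $\Omega$ inside $S_m$ lies in $D$, and $f$ vanishes in an open neighborhood of $D$, so $E_{S_m}f \in \C^\infty(S_m) \cap \W^{k,p}(S_m)$; the same subtlety is handled implicitly in Lemma~\ref{Lem: Estimating differences of averages} and can be resolved by a standard cutoff-and-extension argument.
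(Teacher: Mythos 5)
Your proposal is correct and follows essentially the same route as the paper's proof: Leibniz rule, pass to the reflected cube via Lemma~\ref{Lem: Polynomial norm on different cubes}, split along the Poincar\'e chain from Lemma~\ref{Lem: Building Poincare chains} using Lemma~\ref{Lem: Estimating differences of averages}, and exploit that $E_{S_m}f$ vanishes on the large dyadic subcube $S_m\cap Q_k\subseteq\overline\Omega^{c}$. The only cosmetic difference is in the final step: the paper inserts the adapted polynomial $(E_{S_{m}}f)_{S_{m}\cap Q_k}$ (which is zero for the same reason) and then applies the triangle inequality and~\eqref{Eq: Polynomial Proj Poincare} twice, whereas you identify $\partial^{\gamma}(E_{S_m}f)_{S_m}$ directly with minus the Poincar\'e error on $S_m\cap Q_k$ and apply~\eqref{Eq: Polynomial Proj Poincare} once -- both are equivalent expressions of the same vanishing.
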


\begin{proof}
Note that $Q$ satisfies the assumptions of Lemma~\ref{Lem: Building Poincare chains}. For $Q_j \in \WW_e$ an intersecting cube of $Q$ let $Q_j^* = S_1 , \dots , S_{m_j}$ be the corresponding touching chain. Then
\begin{align*}
 {}&\| \partial^\alpha Ef \|_{\L^p(Q)} \\
 \lesssim {}&\sum_{\substack{Q_j \in \WW_e\\Q \cap Q_j \neq \emptyset}} \sum_{\beta\leq \alpha} \diam(Q_j^*)^{-|\beta|} \bigl\| \partial^{\alpha-\beta} \bigl[ (E_{Q_j^\ast} f)_{Q_j^\ast} \bigr] \bigr\|_{\L^p(Q_j^\ast)} \\
 \lesssim {}&\sum_{\substack{Q_j \in \WW_e\\Q \cap Q_j \neq \emptyset}} \sum_{\beta\leq \alpha} \diam(Q_j^*)^{-|\beta|} \Big[ \bigl\| \partial^{\alpha-\beta} \bigl[ (E_{S_1} f)_{S_1} - (E_{S_{m_j}} f)_{S_{m_j}} \bigr] \bigr\|_{\L^p(S_1)} \\
 &\qquad \qquad \qquad \qquad+ \| \partial^{\alpha-\beta} (E_{S_{m_j}} f)_{S_{m_j}} \|_{\L^p(S_{m_j})} \Big].
\end{align*}
By virtue of Lemma~\ref{Lem: Estimating differences of averages} and~\eqref{Enum: Bound Intersecting cubes} the first term inside the double sum can be controlled by $\diam(Q)^{\ell-|\alpha|} \| \nabla^\ell f \|_{\L^p(\bigcup_{r = 1}^{m_j} (2 S_r) \cap \Omega)}$. For the second term in the sum, note that $E_{S_{m_j}} f \equiv 0$ on the cube $S_{m_j} \cap Q_j$ and that $\abs{S_{m_j} \cap Q_j} \gtrsim \diam(Q_j)^d$ by Lemma~\ref{Lem: Building Poincare chains}. 
Estimate using Lemma~\ref{Lem: Polynomial norm on different cubes} and the fact that $(E_{S_{m_j}} f)_{S_{m_j}\cap Q_j}$ vanishes that
\begin{align*}
	&\| \partial^{\alpha-\beta} (E_{S_{m_j}} f)_{S_{m_j}} \bigl\|_{\L^p(S_{m_j})}
	\lesssim \| \partial^{\alpha-\beta} \bigl[ (E_{S_{m_j}} f)_{S_{m_j}} - (E_{S_{m_j}} f)_{S_{m_j}\cap Q_j} \bigr] \bigr\|_{\L^p(S_{m_j} \cap Q_j)}.
\end{align*}
Using Lemma~\ref{Lem: Polynomial Poincare} and $\diam(S_{m_j}) \approx \diam(Q_j)$ we further estimate
\begin{align*}
	\leq{} &\bigl\| \partial^{\alpha-\beta} \bigl[ E_{S_{m_j}} f - (E_{S_{m_j}} f)_{S_{m_j}} \bigr] \bigr\|_{\L^p(S_{m_j})} + \bigl\| \partial^{\alpha-\beta} \bigl[ E_{S_{m_j}\cap Q_j} f - (E_{S_{m_j}} f)_{S_{m_j}\cap Q_j} \bigr] \bigr\|_{\L^p(S_{m_j}\cap Q_j)} \\
	\lesssim{} &\diam(S_{m_j})^{\ell-|\alpha|+|\beta|} \| \nabla^\ell f \|_{\L^p(S_{m_j} \cap \Omega)} + \diam(S_{m_j} \cap Q_j)^{\ell-|\alpha|+|\beta|} \| \nabla^\ell f \|_{\L^p(S_{m_j} \cap Q_j \cap \Omega)} \\
	\lesssim{} &\diam(Q_j)^{\ell-|\alpha|+|\beta|} \| \nabla^\ell f \|_{\L^p(S_{m_j} \cap \Omega)}.
\end{align*}
With~\eqref{Enum: Bound Intersecting cubes} and $\diam(Q_j) \approx \diam(Q)$ this concludes the proof.
\end{proof}

\begin{lemma}
\label{Lem: Estimate on large adjacent cubes}
Let $f \in \C^\infty_D (\Omega) \cap \W^{k,p}(\Omega)$, $0\leq \ell \leq k$, $|\alpha| \leq \ell$, and $1\leq p\leq \infty$. If $Q \in \WW(\overline{\Omega}) \setminus \WW_e$ intersects a cube in $\WW_e$ and satisfies $\diam(Q) > A \delta$, then
\begin{align*}
 \| \partial^\alpha E f \|_{\L^p(Q)} \lesssim \max(1, \delta^{-\ell}) \| f \|_{\W^{\ell,p}(\bigcup_{\substack{Q_j \in \WW_e\\Q \cap Q_j \neq \emptyset}} Q_j^* \cap \Omega)}.
\end{align*}
\end{lemma}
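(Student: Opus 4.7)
The plan is to adapt the argument used in the proof of Lemma~\ref{Lem: Estimate on exterior cubes}, exploiting the fact that in this regime the diameters of $Q$, of any intersecting $Q_j \in \WW_e$, of the reflected cube $Q_j^\ast$, and the value $\delta$ are all comparable up to constants depending only on $A$, $B$, $d$, $\eps$, $K$, $\lambda$. Indeed, since $Q, Q_j \in \WW(\cl\Omega)$ touch, property~\eqref{Enum: Comparison property} gives $\diam(Q) \simeq \diam(Q_j)$, and together with $\diam(Q_j) \leq A\delta < \diam(Q)$ this forces $\diam(Q_j) \simeq \delta$. Lemma~\ref{Lem: Reflection of cubes} then yields $\diam(Q_j^\ast) \simeq \delta$ and $\dist(Q,Q_j^\ast) \lesssim \delta$. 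In particular, $Q$ and $Q_j^\ast$ both lie inside some cube $\widetilde{Q}$ of sidelength $\simeq \delta$ whose size is comparable to each. (If $\delta = \infty$ the hypothesis $\diam(Q) > A\delta$ cannot hold and the statement is vacuous.)

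Next, on $Q$ only finitely many (at most $12^d$ by~\eqref{Enum: Bound Intersecting cubes}) $\varphi_k$ from the partition of unity do not vanish, namely those associated with $Q_k \in \WW_e$ intersecting $Q$. For these indices the partition of unity sums to one on $Q$, so $Ef = \sum_{k} (E_{Q_k^\ast}f)_{Q_k^\ast} \varphi_k$ on $Q$. Applying the Leibniz rule, I get
\begin{align*}
 \| \partial^\alpha Ef \|_{\L^p(Q)}
 \leq \sum_{\substack{Q_k\in\WW_e\\ Q\cap Q_k\neq\emptyset}} \sum_{\beta \leq \alpha} c_{\alpha,\beta} \| \partial^\beta \varphi_k \|_{\L^\infty} \, \| \partial^{\alpha-\beta} (E_{Q_k^\ast}f)_{Q_k^\ast} \|_{\L^p(Q)}.
\end{align*}
The partition-of-unity bounds give $\|\partial^\beta \varphi_k\|_{\L^\infty} \lesssim \diam(Q_k)^{-|\beta|} \simeq \delta^{-|\beta|}$.

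To handle the polynomial factor, I use that $(E_{Q_k^\ast}f)_{Q_k^\ast}$ is a polynomial of fixed degree at most $k-1$, and that $Q, Q_k^\ast \subseteq \widetilde{Q}$ with $|Q|, |Q_k^\ast| \simeq |\widetilde{Q}|$. Two applications of Lemma~\ref{Lem: Polynomial norm on different cubes} transfer the norm from $Q$ to $Q_k^\ast$:
\begin{align*}
 \| \partial^{\alpha-\beta} (E_{Q_k^\ast}f)_{Q_k^\ast} \|_{\L^p(Q)}
 \lesssim \| \partial^{\alpha-\beta} (E_{Q_k^\ast}f)_{Q_k^\ast} \|_{\L^p(Q_k^\ast)}.
\end{align*}
Estimate~\eqref{Eq: Polynomial Proj Estimate} of Lemma~\ref{Lem: Polynomial Poincare} applied with the extension $E_{Q_k^\ast} f$ then bounds the right-hand side by $\| \partial^{\alpha-\beta} E_{Q_k^\ast} f \|_{\L^p(Q_k^\ast)} = \| \partial^{\alpha-\beta} f\|_{\L^p(Q_k^\ast \cap \Omega)}$.

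Combining these pieces and using $|\beta| \leq |\alpha| \leq \ell$ together with $|\alpha-\beta|\leq \ell$, we get
\begin{align*}
 \| \partial^\alpha Ef \|_{\L^p(Q)}
 \lesssim \sum_{\substack{Q_k\in\WW_e\\ Q\cap Q_k\neq\emptyset}} \sum_{\beta \leq \alpha} \delta^{-|\beta|} \| \partial^{\alpha-\beta} f \|_{\L^p(Q_k^\ast \cap \Omega)}
 \lesssim \max(1, \delta^{-\ell}) \, \|f\|_{\W^{\ell,p}(\bigcup_k Q_k^\ast \cap \Omega)},
\end{align*}
where for the last inequality I use that $\delta^{-|\beta|} \leq \max(1,\delta^{-\ell})$ for $|\beta| \leq \ell$, that each derivative $\partial^{\alpha-\beta} f$ with $|\alpha-\beta|\leq \ell$ contributes to $\|f\|_{\W^{\ell,p}}$, and that the number of intersecting $Q_k$ is bounded by a dimensional constant. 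I expect no serious obstacle; the only mildly delicate point is organizing the polynomial norm transfer via the enclosing cube $\widetilde{Q}$, so that the implicit constants depend only on the parameters allowed in Agreement~\ref{Ag: Cube Sec}.
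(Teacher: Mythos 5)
Your proof is correct and takes essentially the same route as the paper: both observe that $\diam(Q)$, the diameters of the intersecting $Q_k\in\WW_e$, the diameters of the reflected cubes $Q_k^\ast$, and $\delta$ are all comparable; apply the Leibniz rule; bound $\|\partial^\beta\varphi_k\|_{\L^\infty}\lesssim\delta^{-|\beta|}$; transfer the polynomial norm from $Q$ to $Q_k^\ast$ via Lemma~\ref{Lem: Polynomial norm on different cubes}; and finish with~\eqref{Eq: Polynomial Proj Estimate}. One small inaccuracy in your write-up: you assert that the partition of unity $(\varphi_k)_k$ sums to one on $Q$, but since $Q\notin\WW_e$ has interior disjoint from every $Q_k\in\WW_e$ (and the $\varphi_k$ form a partition of unity only on $\bigcup_{Q_k\in\WW_e}Q_k$), this need not hold. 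Fortunately your argument never uses this fact — unlike Lemma~\ref{Lem: Estimate on exterior cubes}, where the unity sum is used to subtract the polynomial $(E_{Q_j^*}f)_{Q_j^*}$ — so the proof still goes through as written.
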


\begin{proof}
Note that in fact $\diam(Q) \approx \delta$ because $Q$ intersects $\WW_e$. The same is true for its intersecting Whitney cubes. Hence, with a similar calculation as in Lemma~\ref{Lem: Estimate on exterior cubes} we derive
\begin{align*}
	\| \partial^\alpha Ef \|_{\L^p(Q)} &\lesssim \sum_{\substack{Q_j \in \WW_e \\ Q\cap Q_j \neq \emptyset}} \sum_{\beta\leq \alpha} \delta^{-|\beta|} \| \partial^{\alpha-\beta} (E_{Q_j^\ast} f)_{Q_j^\ast} \|_{\L^p(Q_j^\ast)} \\
	&\lesssim \sum_{\substack{Q_j \in \WW_e \\ Q\cap Q_j \neq \emptyset}} \sum_{\beta\leq \alpha} \delta^{-|\beta|} \| \partial^{\alpha-\beta} f \|_{\L^p(Q_j^\ast \cap \Omega)} \\
	&\lesssim \max(1, \delta^{-\ell}) \| f \|_{\W^{\ell,p}(\bigcup_{\substack{Q_j \in \WW_e\\Q \cap Q_j \neq \emptyset}} Q_j^* \cap \Omega)}. \qedhere
\end{align*}
\end{proof}

\begin{proposition}
\label{Prop: Boundedness estimate}
For all $1 \leq p \leq \infty$ and $0\leq \ell \leq k$ there exists a constant $C > 0$ depending only on $d$, $\eps$, $\delta$, $k$, $p$, $\lambda$, and $K$ such that for all $f \in \C^\infty_D (\Omega) \cap \W^{k,p}(\Omega)$ and $|\alpha| \leq \ell$ one has
\begin{align*}
\| \partial^\alpha E f\|_{\L^p (\overline{\Omega}^c)} \leq C \|f\|_{\W^{\ell, p}(\Omega)}.
\end{align*}
\end{proposition}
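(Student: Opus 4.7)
My plan is to decompose $\overline{\Omega}^c$ into the Whitney cubes $\WW(\overline{\Omega})$ and handle each cube using one of the three estimates in Lemmas~\ref{Lem: Estimate on exterior cubes},~\ref{Lem: Estimate on small adjacent cubes}, and~\ref{Lem: Estimate on large adjacent cubes}, assembled via the overlap bounds already established. Since each partition-of-unity function $\varphi_j$ is supported in $\frac{17}{16} Q_j$, the extension $Ef$ vanishes on any Whitney cube in $\WW(\overline{\Omega})$ that neither lies in $\WW_e$ nor intersects a cube of $\WW_e$ (this uses the dyadic structure and~\eqref{Enum: Comparison property}, since the $\frac{1}{32}\ell(Q_j)$-thickening is much smaller than the minimum neighboring side length $\ell(Q_j)/4$); such cubes contribute nothing. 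The remaining Whitney cubes split into three disjoint classes: (i) cubes $Q \in \WW_e$; (ii) cubes $Q \in \WW(\overline{\Omega}) \setminus \WW_e$ that intersect some $Q_j \in \WW_e$ and satisfy $\diam(Q) \leq A\delta$; and (iii) the analogous cubes with $\diam(Q) > A\delta$.

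For $1 \leq p < \infty$ I would raise to the $p$-th power and use the disjointness of the Whitney cubes to write
\begin{align*}
 \|\partial^\alpha Ef\|_{\L^p(\overline{\Omega}^c)}^p \leq \sum_{Q \in \WW_e} \|\partial^\alpha Ef\|_{\L^p(Q)}^p + \sum_{Q \in \text{(ii)}} \|\partial^\alpha Ef\|_{\L^p(Q)}^p + \sum_{Q \in \text{(iii)}} \|\partial^\alpha Ef\|_{\L^p(Q)}^p.
\end{align*}
On class (i), Lemma~\ref{Lem: Estimate on exterior cubes} yields two contributions; the first is bounded via $\diam(Q_j)^{\ell-|\alpha|} \leq (A\delta)^{\ell-|\alpha|}$ and the overlap estimate~\eqref{Eq: Nasty chain estimate} for the sets $F(Q_j)$, giving control by $C(\delta) \|\nabla^\ell f\|_{\L^p(\Omega)}^p$, and the second by Lemma~\ref{Lem: Number of preimages to reflected cube} together with the disjointness of the interiors of the cubes in $\WW_i$, giving $C\|\partial^\alpha f\|_{\L^p(\Omega)}^p$. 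Class (ii) is treated by Lemma~\ref{Lem: Estimate on small adjacent cubes} in exactly the same manner, using the analogue of~\eqref{Eq: Nasty chain estimate} for the sets $F_P(Q)$ (whose bound is obtained by the argument indicated in the paragraph preceding~\eqref{Eq: Nasty chain estimate}). For class (iii) the diameters of $Q$ and of every intersecting $Q_j \in \WW_e$ are comparable to $\delta$, so by~\eqref{Enum: Bound Intersecting cubes} the incidence relation between cubes in class (iii) and cubes in $\WW_e$ has bounded multiplicity in both directions; together with Lemma~\ref{Lem: Estimate on large adjacent cubes} and the finite overlap of the $Q_j^\ast$, this produces a bound of the form $C\max(1,\delta^{-\ell})^p \|f\|_{\W^{\ell,p}(\Omega)}^p$.

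The main obstacle I foresee is the careful bookkeeping of overlap constants to ensure that every implicit constant depends only on the parameters $d,\eps,\delta,k,p,\lambda,K$ promised in the statement, and in particular that the $\delta$-dependence is tracked explicitly (it enters through $\diam(Q_j) \leq A\delta$ in the low-order contributions and through $\max(1,\delta^{-\ell})$ from Lemma~\ref{Lem: Estimate on large adjacent cubes}). The limiting case $p=\infty$ is simpler: the sum over Whitney cubes is replaced by a supremum, no overlap summation is required, and the three lemmas applied class by class give the bound directly, since each right-hand side is already controlled by $\|f\|_{\W^{\ell,\infty}(\Omega)}$ up to a constant of the allowed type.
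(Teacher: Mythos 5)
Your proposal follows essentially the same route as the paper: decompose $\overline{\Omega}^c$ by the Whitney cubes of $\overline{\Omega}$, sort them into the three classes, apply Lemmas~\ref{Lem: Estimate on exterior cubes}--\ref{Lem: Estimate on large adjacent cubes} on each class, absorb the $\diam(Q_j)^{\ell-|\alpha|}$ and $\max(1,\delta^{-\ell})$ factors using $\ell\geq|\alpha|$ and $\diam(Q_j)\lesssim\delta$, and sum via~\eqref{Eq: Nasty chain estimate}, its $F_P$-analogue, Lemma~\ref{Lem: Number of preimages to reflected cube}, and the disjointness of Whitney cubes, with the $p=\infty$ case handled by taking a supremum. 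Your remark explaining why $Ef$ vanishes on cubes not meeting $\WW_e$ (via $\supp\varphi_j\subseteq\frac{17}{16}Q_j$ and the comparability of neighbouring Whitney cubes) spells out a step the paper leaves implicit, but otherwise the two proofs coincide.
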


\begin{proof}
The estimates for the derivatives in the case $p < \infty$ are deduced by the following calculation based on Lemmas~\ref{Lem: Estimate on exterior cubes},~\ref{Lem: Estimate on small adjacent cubes}, and~\ref{Lem: Estimate on large adjacent cubes}
\begin{align*}
 \|\partial^\alpha E f\|_{\L^p(\overline{\Omega}^c)}^p &= \sum_{Q_j \in \WW_e} \|\partial^\alpha E f\|_{\L^p(Q_j)}^p + \sum_{\substack{Q \in \WW(\overline{\Omega}) \setminus \WW_e \\ Q \cap \WW_e \neq \emptyset}} \|\partial^\alpha E f\|_{\L^p(Q)}^p \\
&\lesssim \sum_{Q_j \in \WW_e} \bigl( \diam(Q_j)^{(\ell-|\alpha|)p} \| \nabla^\ell f \|_{\L^p(F(Q_j) \cap \Omega)}^p + \| \partial^\alpha f \|_{\L^p(Q_j^\ast \cap \Omega)}^p \bigr) \\
&+ \sum_{\substack{Q \in \WW(\overline{\Omega}) \setminus \WW_e \\ Q \cap \WW_e \neq \emptyset \\ \diam(Q) > A \delta}} \max(1, \delta^{-\ell p})\, \| f \|_{\W^{\ell,p}(\bigcup_{\substack{Q_j \in \WW_e\\Q \cap Q_j \neq \emptyset}} Q_j^* \cap \Omega)}^p \\
 &+ \sum_{\substack{Q \in \WW(\overline{\Omega}) \setminus \WW_e \\ Q \cap \WW_e \neq \emptyset \\ \diam(Q) \leq A \delta}} \diam(Q_j)^{(\ell-|\alpha|)p} \| \nabla^\ell f \|_{\L^p(F_P(Q) \cap \Omega)}^p.
\end{align*}
Since $\ell-|\alpha| \geq 0$ and $\diam(Q_j)$ is comparably smaller than $\delta$, we can get rid of the factors in front of the norm terms to the cost of an implicit constant depending only on $\delta$ and $k$. The estimate then follows from Lemma~\ref{Lem: Number of preimages to reflected cube} and from~\eqref{Eq: Nasty chain estimate}, which holds also true for the chains $F_P(Q)$ as mentioned in the discussion previous to~\eqref{Eq: Nasty chain estimate}.

The estimate in the case $p=\infty$ is even simpler because we can use the same estimates but can omit the overlap argument. 
%In the case $p = \infty$, the estimate for the gradient follows by Lemmas~\ref{Lem: Estimate on exterior cubes},~\ref{Lem: Estimate on small adjacent cubes} and~\ref{Lem: Estimate on large adjacent cubes} employed in the calculation
%\begin{align*}
% \|\nabla E f\|_{\L^{\infty}(\overline{\Omega}^c)} &\leq \sup_{Q_j \in \WW_e} \|\nabla E f\|_{\L^{\infty}(Q_j)} + \sup_{\substack{Q \in \WW(\overline{\Omega}) \setminus \WW_e \\ Q \cap \WW_e \neq \emptyset \\ \diam(Q) > A \delta}} \|\nabla E f\|_{\L^{\infty}(Q)} + \sup_{\substack{Q \in \WW(\overline{\Omega}) \setminus \WW_e \\ Q \cap \WW_e \neq \emptyset \\ \diam(Q) \leq A \delta}} \|\nabla E f\|_{\L^{\infty}(Q)} \\
% &\lesssim \sup_{Q_j \in \WW_e} \| \nabla f \|_{\L^{\infty}(F(Q_j) \cap \Omega)} + \delta^{-1} \sup_{\substack{Q \in \WW(\overline{\Omega}) \setminus \WW_e \\ Q \cap \WW_e \neq \emptyset \\ \diam(Q) > A \delta}} \| f \|_{\L^{\infty}(\cup_{\substack{Q_j \in \WW_e\\Q \cap Q_j \neq \emptyset}} Q_j^* \cap \Omega)} \\
% &\qquad+ \sup_{\substack{Q \in \WW(\overline{\Omega}) \setminus \WW_e \\ Q \cap \WW_e \neq \emptyset \\ \diam(Q) \leq A \delta}} \| \nabla f \|_{\L^{\infty}(F_P(Q) \cap \Omega)} \\
% &\lesssim \|f\|_{\L^{\infty} (\Omega)} + \| \nabla f \|_{\L^{\infty} (\Omega)}.
%\end{align*}
%The $\L^p$-estimate follows as well by decomposing $\overline{\Omega}^c$ by means of the Whitney decomposition $\WW(\overline{\Omega})$, followed by an application of H\"older's inequality to the mean values building the extension operator $E$ and an application of Lemma~\ref{Lem: Number of preimages to reflected cube}.
\end{proof}

\begin{remark}
\label{Rem: Homogeneous estimates}
	Assume that $F$ and $G$ are subsets of $\IR^d$ such that the following version of~\eqref{Eq: Nasty chain estimate} holds true:
	\begin{align*}
		\sum_{\substack{Q_j \in \WW_e \\ Q_j \cap F \neq \emptyset}} \chi_{F(Q_j)} (x) \leq C \chi_G(x).
	\end{align*}
	Then we may replace the $\L^p(\cl{\Omega}^c)$ norm on the left-hand side of the estimate in Proposition~\ref{Prop: Boundedness estimate} by an $\L^p(F\cap \cl{\Omega}^c)$ norm and the $\W^{\ell, p}(\Omega)$ norm on the right-hand side by an $\W^{\ell, p}(G\cap \Omega)$ norm. Moreover, if $|\alpha|=\ell$ and $F$ is contained in $N_{A\delta}(\Omega)$, then it suffices to estimate against $\| \nabla^\ell f \|_{\L^p(G \cap \Omega)}$. Indeed, in this case the second term in the final estimate in Proposition~\ref{Prop: Boundedness estimate} vanishes. We will benefit from these observations in Section~\ref{Sec: Homogeneous estimates}.
\end{remark}

\section{Approximation with smooth functions on $\IR^d$}
\label{Sec: Approximation smooth functions}

\noindent In this section, we show that smooth and compactly supported functions on $\IR^d$ whose support stays away from $D$ are dense in $\C^\infty_D(\Omega)\cap \W^{k,p}(\Omega)$. In particular, both classes of functions have the same $\W^{k,p}(\Omega)$-closure. We will benefit from this fact in Section~\ref{Sec: Conclusion of the proof}. To do so, we use an approximation scheme similar to that introduced in~\cite[Sec.~4]{Jones}. The arguments rely on techniques similar to what we have used in the construction of the extension operator.

To begin with, let $f\in \C_D^\infty(\Omega)\cap \W^{k,p}(\Omega)$ and put $\kappa \coloneqq \dist(\supp(f), D)>0$. Furthermore, let $\eta>0$ quantify the approximation error. We need parameters $A$, $B$, $s$, $t$, and $\rho$ for which we will collect several constraints in the course of this section (similar to what we have done in Section~\ref{Sec: Cubes and chains}). Some parameters depend on each other, but there is a non-cyclic order in which they can be picked. This will enable us to show the following proposition.
\begin{proposition}
\label{Prop: Approximation}
	Let $f$, $\eta$, and $\kappa$ be as above. Then there exists a function $g$ which is smooth on $\IR^d$, satisfies $\dist(\supp(g),D) > \kappa/2$, and $\|f-g\|_{\W^{k,p}(\Omega)} \lesssim \eta$. In particular, smooth and compactly supported functions on $\IR^d$  whose support has positive distance to $D$ are dense in $\C_D^\infty(\Omega)\cap \W^{k,p}(\Omega)$ with respect to the $\W^{k,p}(\Omega)$ topology.
\end{proposition}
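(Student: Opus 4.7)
The strategy is to combine the extension operator $E$ constructed in Section~\ref{Sec: The extension operator} with standard mollification and a smooth cutoff. Since $f$ is already smooth on $\Omega$ and vanishes on $N_\kappa(D)\cap \Omega$, the only obstruction to producing a $\C^\infty(\IR^d)$ approximation is the possible lack of smoothness of any global extension across $\Gamma$. The operator $E$ produces $\tilde f := Ef \in \W^{k,p}(\IR^d)$ that coincides with $f$ on $\Omega$, is smooth on $\cl{\Omega}^c$ by Remark~\ref{Rem: Definition of extension operator}, and vanishes in some neighborhood of $D$; mollifying removes the non-smoothness across $\Gamma$, and a cutoff enforces compact support together with the distance-to-$D$ requirement.

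Concretely, let $\phi_\rho$ be a standard smooth mollifier and set $g_\rho := \phi_\rho * \tilde f \in \C^\infty(\IR^d)$. Pick $s \in (\kappa/2, \kappa)$ and a large radius $R > 0$, and choose a smooth cutoff $\chi$ with $\chi \equiv 0$ on $N_{\kappa/2}(D) \cup \B(0, 2R)^c$ and $\chi \equiv 1$ on $(\IR^d \setminus N_s(D)) \cap \B(0, R)$, satisfying $\|\nabla^\ell \chi\|_{\L^\infty} \lesssim 1$ for $\ell \leq k$. Define $g := \chi\, g_\rho$. By construction $g \in \C_c^\infty(\IR^d)$ and $\dist(\supp(g), D) \geq \kappa/2$.

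To estimate the approximation error, split
\begin{align*}
 \|f - g\|_{\W^{k,p}(\Omega)} \leq \|(1 - \chi)\tilde f\|_{\W^{k,p}(\Omega)} + \|\chi(\tilde f - g_\rho)\|_{\W^{k,p}(\Omega)}.
\end{align*}
On $\Omega$ we have $\tilde f = f$, and $(1 - \chi)$ is supported in $N_s(D) \cup \B(0, R)^c$. Since $s < \kappa$ and $f$ vanishes on $N_\kappa(D) \cap \Omega$, the contribution from $N_s(D)$ is zero, leaving only the spatial tail $\|f\|_{\W^{k,p}(\Omega \setminus \B(0, R))}$, which tends to $0$ as $R \to \infty$ by dominated convergence. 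The second term is controlled via the Leibniz rule by $C_k \|\tilde f - g_\rho\|_{\W^{k,p}(\IR^d)}$, which tends to $0$ as $\rho \to 0$ by the classical density of smooth functions in $\W^{k,p}(\IR^d)$. Choosing $R$ large and then $\rho$ small yields $\|f - g\|_{\W^{k,p}(\Omega)} < \eta$.

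The main obstacle is verifying that the cutoff $\chi$ at scale $\kappa/2$ introduces no error on $\Omega$. This relies crucially on the strict gap between $\kappa = \dist(\supp f, D)$ and the cutoff scale $\kappa/2$, which ensures that the cutoff region where $\chi \neq 1$ lies inside the zero set of $f$ on $\Omega$; in particular, no quantitative control of the support of $\tilde f$ near $D$ is needed, only the qualitative statement from Remark~\ref{Rem: Definition of extension operator}. A second subtlety is the unbounded case, where the spatial tail must be handled via the compact-support cutoff at scale $R$; this is routine once one observes that $\|f\|_{\W^{k,p}(\Omega \setminus \B(0, R))}$ is the tail of a convergent integral. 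Everything else is a clean combination of the already proven boundedness of $E$ and standard smoothing in $\W^{k,p}(\IR^d)$.
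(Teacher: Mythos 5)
The proposal has a genuine circularity. You assert $\tilde f := Ef \in \W^{k,p}(\IR^d)$ for $f \in \C_D^\infty(\Omega) \cap \W^{k,p}(\Omega)$, but at this point in the paper that fact is not available. What Proposition~\ref{Prop: Boundedness estimate} establishes is only that the derivatives of $Ef$ are $\L^p$-bounded on $\cl{\Omega}^c$, where $Ef$ is smooth; it says nothing about weak differentiability of $Ef$ \emph{across} $\Gamma$. A function that is smooth with $\L^p$ derivatives separately on $\Omega$ and on $\cl{\Omega}^c$ need not be weakly differentiable on $\IR^d$ (the Heaviside function is the basic obstruction, and $\lvert\Gamma\rvert=0$ does not resolve it). The matching of $Ef$ across $\Gamma$ is exactly the content of Proposition~\ref{Prop: Lipschitz continuous representative}, which requires $f$ to be the restriction of a function in $\C_D^\infty(\IR^d)\cap \W^{k,\infty}(\Omega)$ so that the piecewise-defined $g_\alpha$ can be shown Lipschitz. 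But getting from $\C_D^\infty(\Omega)\cap\W^{k,p}(\Omega)$ to that more regular class is precisely what Proposition~\ref{Prop: Approximation} is needed for. So your argument uses the conclusion of the density result (via the full $\W^{k,p}(\IR^d)$-mapping property of $E$) to prove the density result itself.

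The gap also manifests concretely in the estimate: if $Ef$ or one of its derivatives has a mismatch across $\Gamma$, then $\phi_\rho * Ef$ acquires derivatives of size $O(\rho^{-1})$ in the $\rho$-strip around $\Gamma$, so $\|\tilde f - g_\rho\|_{\W^{k,p}(\IR^d)}$ (and already the restriction to $\Omega$) does not tend to $0$ as $\rho\to 0$. There is no way to salvage the mollification step without first establishing the transmission condition at $\Gamma$ for the class $\C_D^\infty(\Omega)\cap\W^{k,p}(\Omega)$, which would essentially re-prove the proposition.

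The paper sidesteps the issue entirely by never extending across $\Gamma$: near $\Gamma$ it builds a smooth approximant by polynomial fitting on a Whitney-type grid adapted to $\Gamma$ (the function $g_1=\sum_j (E_0 f)_{R_j}\varphi_j$), while away from $\Gamma$ it mollifies the zero extension $E_0 f$ to $\Omega_0 = \Omega\cup\bigcup_{x\in D}\B(x,3\kappa/4)$ (safe because $\supp(f)$ stays $\kappa$-away from $D$), and then glues the two pieces with a cutoff at scale $s$. The error is localized to the $s$-strip $\tilde B_{3s}$ and made small by absolute continuity of $\|f\|_{\W^{k,p}}$. Your cutoff-at-$\kappa/2$ and tail-at-$R$ observations are fine, but they do not address the actual obstruction, which sits at $\Gamma$, not at $D$ or at infinity.
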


For brevity, put $\tilde B_{t} \coloneqq N_t(\bd \Omega)$ for the tubular neighborhood of size $t$ around  $\bd\Omega$ and choose $s\in (0,1)$ in such a way that we have the estimate
\begin{align}
\label{eq: Wkp estimate f in boundary strip}
	\|f\|_{\W^{k,p}(\tilde B_{3s} \cap \Omega)} \leq \eta.
\end{align}
We may assume that $s$ is smaller than $\kappa/2$. Furthermore, we define a region near $\Gamma$ that stays away from $D$ and is adapted to the support of $f$, namely
\begin{align*}
	B_t \coloneqq \bigl\{ x\in \IR^d \colon \dist(x, \Gamma) < t \text{ and } \dist(x, D) > \frac{\kappa}{2} \bigr\}.
\end{align*}
Later on, we will only deal with $t\in (0,3s)$, so that~\eqref{eq: Wkp estimate f in boundary strip} will in particular be applicable on $B_t\cap \Omega$.

Denote the zero extension of $f$ to
\begin{align*}
	\Omega_0 \coloneqq \Omega \cup \bigcup_{x\in D} \B(x, 3\kappa/4)
\end{align*}
by $E_0 f$. Note that this function is again smooth since $\dist(\B(x, 3\kappa/4), \supp(f)) \geq \frac{\kappa}{4}$ for $x\in D$.
\begin{lemma}
\label{Lem: balls for mollification}
	Let $x\in \Omega\setminus B_s$, then $\B(x,t) \subseteq \Omega_0$ for all $0<t<s/2$.
\end{lemma}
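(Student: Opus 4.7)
The plan is to distinguish two cases based on which of the two defining conditions of $B_s$ fails for $x$, namely either $\dist(x,\Gamma) \geq s$ or $\dist(x,D) \leq \kappa/2$. Fix $y \in \B(x,t)$ with $0 < t < s/2$; the goal is to show $y \in \Omega_0$.

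In the first case, when $\dist(x,D) \leq \kappa/2$, since $D$ is closed there exists $z \in D$ with $|x-z| \leq \kappa/2$. The triangle inequality then gives
\begin{align*}
	|y-z| \leq |y-x| + |x-z| < \frac{s}{2} + \frac{\kappa}{2} < \frac{\kappa}{4} + \frac{\kappa}{2} = \frac{3\kappa}{4},
\end{align*}
where the second inequality uses $s < \kappa/2$. Hence $y \in \B(z, 3\kappa/4) \subseteq \Omega_0$.

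In the second case, when $\dist(x,\Gamma) \geq s$, we may further assume $y \notin \Omega$ (otherwise $y \in \Omega \subseteq \Omega_0$ and we are done). Consider the straight line segment from $x \in \Omega$ to $y \notin \Omega$; by connectedness of the segment and the intermediate value argument, it must meet $\bd\Omega = D \cup \Gamma$ at some point $w$. The key observation is that $w$ cannot lie in $\Gamma$: this would force $|x-w| \geq \dist(x,\Gamma) \geq s$, contradicting $|x-w| \leq |x-y| < s/2$. Thus $w \in D$, and then
\begin{align*}
	|y-w| \leq |x-y| < \frac{s}{2} < \frac{\kappa}{4} < \frac{3\kappa}{4},
\end{align*}
so $y \in \B(w, 3\kappa/4) \subseteq \Omega_0$.

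No step is really an obstacle here; the whole argument is a couple of triangle inequalities plus the topological observation that a segment joining $\Omega$ to its complement must cross $\bd\Omega$. The only thing one has to be careful about is handling the degenerate situations where $D$ or $\Gamma$ might be empty, but those are covered by the paper's convention that the infimum over the empty set is $\infty$, which makes the relevant case vacuous.
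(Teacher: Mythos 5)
Your proof is correct, and Case~1 coincides with the paper's argument. In Case~2 you diverge slightly: the paper splits the cases disjointly, so that in its second case both $\dist(x,D)>\kappa/2$ and $\dist(x,\Gamma)\geq s$ are known, giving $\dist(x,\partial\Omega)\geq\min(\kappa/2,s)=s>t$ and hence $\B(x,t)\subseteq\Omega$ directly. You instead run the second case under the weaker hypothesis $\dist(x,\Gamma)\geq s$ alone and replace the one-line distance estimate by a boundary-crossing argument, concluding $\B(x,t)\subseteq\Omega_0$ (allowing the ball to leak through $D$). Both are sound; the paper's version is a hair shorter because the disjoint case split yields the extra lower bound $\dist(x,D)>\kappa/2$ for free.
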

\begin{proof}
	Recall $s<\kappa/2$. We distinguish two cases.
	
	\noindent\emph{Case 1}: $\dist(x,D) \leq \kappa/2$. Let $z\in D$ with $|x-z|=\dist(x,D)$. For $y\in \B(x,\kappa/4)$ we derive
	\begin{align*}
		|y-z| \leq |x-y| + |x-z| < \kappa/4 + \kappa/2 = 3\kappa/4,
	\end{align*}
	so by choice of $s$ we see $$\B(x,t) \subseteq \B(x, \kappa/4) \subseteq \B(z, 3\kappa/4) \subseteq \Omega_0.$$
	
	\noindent\emph{Case 2}: $\dist(x,D) > \kappa/2$ and consequently $\dist(x,\Gamma) \geq s$. Then $\dist(x,\bd \Omega) \geq \min(\kappa/2,s) = s > t$, therefore $\B(x,t) \subseteq \Omega \subseteq \Omega_0$ (keep in mind $x\in\Omega$).
\end{proof}
\subsection*{A family of interior cubes}
Assume that $\rho$ is a dyadic number and $\mathcal{G}$ is the collection of dyadic cubes of sidelength $\rho$. Recall $\Xi = \IR^d \setminus \cl{\Gamma}$. As before, we write $(\Omega_m)_m$ for the connected components of $\Omega$ whose boundary intersects $\Gamma$ and $(\Upsilon_m)_m$ for the remaining ones. Write $\Sigma'$ for the collection of cubes in $\mathcal{G}$ that are contained in $\Xi$. Moreover, we introduce the collection of cubes
\begin{align*}
	\Sigma \coloneqq \bigl\{ R\in \mathcal{G} \colon \text{there exist } S\in \WW(\overline{\Gamma}) \text{ and } m : \diam(S) \geq A\rho,\; R\subseteq S \;\&\; R\cap \Omega_m \neq \emptyset \bigr\}.
\end{align*}
These cubes take the role of $\WW_i$ in the upcoming approximation construction. Note that $\Sigma \subseteq \Sigma'$. For $R\in \Sigma$ define enlarged cubes
\begin{align*}
	\hat R \coloneqq B\,R \qquad \text{and} \qquad \hat{\hat R} \coloneqq 2B\, R.
\end{align*}
We claim that if we choose $\rho \leq \frac{\kappa}{2\sqrt{d}}$, then $R\subseteq \Omega_0$. Indeed, if $R\cap D = \emptyset$, then $R$ is properly contained in $\Omega$ since it has a non-trivial intersection with $\Omega$ and avoids its boundary. Otherwise, let $z\in R\cap D$, then $R\subseteq \B(z, \diam(R)) \subseteq \B(z, 3\kappa/4)$.

\begin{lemma}
\label{Lem: Sigma covers strip away from boundary}
	There are constants $C_1=C_1(d)>0$ and $C_2=C_2(A,s)>0$ such that
	\begin{align*}
		\bigcup_m \Omega_m \setminus N_s(\Gamma) \subseteq \bigcup_{R\in \Sigma} R,
	\end{align*}
	provided $A \geq C_1$ and $\rho \leq C_2$.
\end{lemma}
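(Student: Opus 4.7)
The plan is, for each $x$ in the set on the left, to pick any cube $R\in \mathcal{G}$ containing $x$ and to show that this $R$ belongs to $\Sigma$. Fix $x\in \Omega_m$ with $\dist(x,\Gamma) \geq s$, and let $R \in \mathcal{G}$ be a cube containing $x$; since $\rho$ is dyadic, such a cube exists, and I may assume $x$ lies in the interior of $R$, dealing with the measure-zero case of boundary points at the end by a standard approximation argument (noting $\bigcup_{R\in \Sigma} R$ is closed).

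Since $\dist(x,\Gamma) \geq s >0$, the point $x$ lies in $\Xi = \IR^d \setminus \overline{\Gamma}$, so the Whitney decomposition provides a cube $S\in \WW(\overline{\Gamma})$ containing $x$. The observation $\diam(S) \geq \frac{1}{5}\dist(x,\overline{\Gamma})$ recorded just before Lemma~\ref{Lem: Hyperbolic distance to chain condition} then gives
\begin{align*}
	\diam(S) \geq \tfrac{1}{5}\dist(x,\Gamma) \geq \tfrac{s}{5}, \qquad \text{hence} \qquad \ell(S) \geq \tfrac{s}{5\sqrt{d}}.
\end{align*}
So provided $\rho \leq s/(5\sqrt{d})$ we have $\ell(R)=\rho \leq \ell(S)$, and because both cubes are dyadic and share the interior point $x$ of $R$, the nesting property of the dyadic grid forces $R \subseteq S$. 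Provided we additionally impose $\rho \leq s/(5A)$, the bound $\diam(S) \geq s/5 \geq A\rho$ holds as well. Since $R \cap \Omega_m \ni x$ is non-empty, the triple $(S,R,\Omega_m)$ witnesses that $R\in \Sigma$, and so $x \in R \subseteq \bigcup_{R\in \Sigma} R$, as required.

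Both smallness conditions on $\rho$ collapse into a single bound $\rho \leq s/(5\max(\sqrt{d}, A))$, so it suffices to take $C_1(d) \coloneqq \sqrt{d}$ and $C_2(A,s) \coloneqq s/(5A)$: the hypothesis $A \geq C_1$ ensures $\max(\sqrt{d},A) = A$, and then $\rho \leq C_2$ yields both inequalities simultaneously. The only genuine subtlety is the dyadic nesting step, which requires $x$ to be in the interior of $R$; this is the reason for the opening remark about handling the thin boundary set separately.
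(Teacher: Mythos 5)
Your argument is, at its core, the same as the paper's: find the Whitney cube $S\in \WW(\overline{\Gamma})$ containing $x$, use $\diam(S)\geq\tfrac15\dist(x,\Gamma)\geq s/5$ to get $\diam(S)\geq A\rho$ when $\rho\leq s/(5A)$, and then argue that a small grid cube $R\ni x$ nests into $S$ so that $R\in\Sigma$; the constants $C_1=\sqrt d$ and $C_2=s/(5A)$ match the paper exactly.

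The one genuine issue is the handling of the degenerate case where every $R\in\mathcal{G}$ containing $x$ has $x$ on its boundary. You defer to ``a standard approximation argument (noting $\bigcup_{R\in\Sigma} R$ is closed)'', but that argument requires the non-grid points of $\Omega_m\setminus N_s(\Gamma)$ to be dense in $\Omega_m\setminus N_s(\Gamma)$, and that can fail: if $\dist(x,\Gamma)=s$ exactly, the set $\{y\in\Omega_m:\dist(y,\Gamma)\geq s\}$ may be locally just $\{x\}$ (think of $\Gamma$ wrapping around $x$ at distance $s$ on all sides, with $x$ at the origin and hence a dyadic grid point for every $\rho$). In such a configuration there simply is no nearby non-grid point in the set, and the closedness of $\bigcup_{R\in\Sigma}R$ gives you nothing. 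The detour through interior points is unnecessary anyway: since $\ell(R)=\rho\leq\ell(S)$ and both $R$ and $S$ are cubes from the same dyadic lattice, $S$ is exactly tiled by cubes from $\mathcal{G}$, so you can simply \emph{choose} $R$ to be a tile of $S$ containing $x$; then $R\subseteq S$ holds by construction for every $x$, grid point or not, and the measure-zero case disappears. This is what the paper does (its ``they have a common point'' phrasing is equally terse, but the tiling is the content behind it). So: same approach, same constants; replace the interior-point assumption plus closure argument by the direct tiling choice and the proof is complete.
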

\begin{proof}
	Let $x\in \Omega_m \setminus N_s(\Gamma)$. In particular, $x\in \Xi$ and hence there exists $S\in \WW(\cl{\Gamma})$ that contains $x$. Since $\dist(x, \Gamma) \geq s$ by choice of $x$ we conclude $\diam(S) \geq \frac{1}{5} \dist(x, \Gamma) \geq \frac{s}{5}$. Hence, if we choose $\rho \leq \frac{s}{5A}$, then $\diam(S) \geq A\rho$. Let $R$ be some cube in $\mathcal{G}$ that contains $x$. If we demand $A\geq \sqrt{d}$, then $R\subseteq S$ because both are dyadic cubes and they have a common point. Finally, $R\cap \Omega_m \neq \emptyset$ since $x\in \Omega_m$, so $R\in \Sigma$.
\end{proof}
If we do not allow to keep some distance to $\Gamma$, then at least the enlarged cubes $\hat R$ cover the whole Neumann boundary region.
\begin{lemma}
\label{Lem: Enlarged cubes cover whole boundary strip}
	There are constants $C_1=C_1(A,\eps)>0$ and $C_2=C_2(A,\delta,\eps,\kappa,\lambda)>0$ such that
	\begin{align*}
		B_{2s} \; \cap \; \bigcup_m \Omega_m \subseteq \bigcup_{R\in \Sigma} \hat R,
	\end{align*}
	provided $B \geq C_1$ and $\rho \leq C_2$.
\end{lemma}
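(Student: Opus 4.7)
The plan is to reach each $x \in B_{2s} \cap \Omega_m$ by travelling a controlled Euclidean distance along an $(\eps,\delta)$-path into $\Omega_m$ so as to land on an anchor $z \in \Omega_m$ with $\dist(z,\Gamma) \geq 5A\rho$. Such an anchor automatically sits inside a Whitney cube of $\cl\Gamma$ of sidelength at least $\rho$, whence the dyadic cube $R \in \mathcal{G}$ of sidelength $\rho$ through $z$ belongs to $\Sigma$ (using $A \geq \sqrt d$). The enlargement by the factor $B$ is then tuned to compensate the displacement $|x-z|$ and to swallow $x$ in $\hat R = BR$. In particular no case split on whether $\dist(x,\Gamma)$ exceeds $s$ is necessary, which accounts for the absence of $s$ in the constant $C_2$.

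Concretely, I would set $\eta \coloneqq 20A\rho/\eps$ and impose on $\rho$ an upper bound $C_2 = C_2(A,\eps,\delta,\kappa,\lambda)$ small enough to ensure $\eta \leq \lambda\delta/2$, $\eta < \delta$, and $\eps^{-1}\eta < \kappa/2$. Since $\Omega_m$ is open and connected with $\diam(\Omega_m) \geq \lambda\delta \geq 2\eta$, the intermediate value theorem applied to the map $y \mapsto |x-y|$ along a continuous path in $\Omega_m$ emanating from $x$ produces $y_0 \in \Omega_m$ with $|x-y_0| = \eta$. As $\eta < \delta$, Assumption~\ref{Ass: Epsilon-delta assumption} delivers a rectifiable path $\gamma \subseteq \Xi$ joining $x$ and $y_0$ with $\mathrm{length}(\gamma) \leq \eps^{-1}\eta$ that satisfies the carrot condition~\eqref{Eq: Carrot condition}.

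The crucial auxiliary step is to show $\gamma \subseteq \Omega_m$. By construction $\gamma$ avoids $\cl\Gamma$, and any $y\in \gamma$ satisfies $|x-y| \leq \mathrm{length}(\gamma) < \kappa/2 < \dist(x,D)$, so $\gamma$ avoids $D$ as well; hence $\gamma$ avoids $\bd\Omega \subseteq D \cup \cl\Gamma$, so by connectedness and $x \in \Omega_m$ we obtain $\gamma \subseteq \Omega_m$. A second application of the intermediate value theorem selects $z \in \gamma$ with $|x-z| = \eta/2$; since $|y_0-z| \geq \eta/2$, the carrot condition yields $\dist(z,\Gamma) \geq \eps\eta/4 = 5A\rho$.

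To conclude, I take $R \in \mathcal{G}$ of sidelength $\rho$ with $z \in R$, so $z \in R \cap \Omega_m$, and pick the Whitney cube $S \in \WW(\cl\Gamma)$ with $z \in S$. Then $\diam(S) \geq \dist(z,\Gamma)/5 \geq A\rho$, and dyadic nesting (using $A \geq \sqrt d$, which is harmless since $A$ has been taken large enough also for Lemma~\ref{Lem: Sigma covers strip away from boundary}) gives $R \subseteq S$, proving $R \in \Sigma$. Writing $c$ for the centre of $R$, one then has $\|x-c\|_\infty \leq |x-z| + \rho/2 \leq (1 + 20A/\eps)\rho/2$, so $x \in BR = \hat R$ whenever $B \geq C_1 \coloneqq 1 + 20A/\eps$. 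The only genuine obstacle is the confinement $\gamma \subseteq \Omega_m$, bought by $\eps^{-1}\eta < \kappa/2$, which is precisely why $\kappa$ enters the dependencies of $C_2$.
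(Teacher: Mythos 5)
Your proof is correct and follows essentially the same route as the paper: travel distance $20A\rho/\eps$ into $\Omega_m$ (using (DC) to guarantee such a point exists), take the midpoint $z$ of the $(\eps,\delta)$-path, use the carrot condition to get $\dist(z,\Gamma)\geq 5A\rho$, and confine the path to $\Omega_m$ via the smallness constraint involving $\kappa$. The only cosmetic difference is that you show the whole path avoids $\bd\Omega$ and hence stays in $\Omega_m$, whereas the paper argues by contradiction that the single point $z$ must lie in $\Omega_m$; both rest on the same length-versus-$\dist(x,D)$ estimate, and your constants match the paper's.
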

\begin{proof}
	Let $x\in B_{2s} \cap \Omega_m$. Choose $\rho \leq \frac{\eps}{80A} \min(\delta, \lambda \delta)$. Then $\frac{20A}{\eps}\rho < \lambda \delta/2 \leq \diam(\Omega_m)/2$ by~\eqref{Eq: Diameter Condition}, hence there exists some $y\in \Omega_m$ satisfying $|x-y|=\frac{20A}{\eps} \rho$. Moreover, since $|x-y| < \delta$, there is a curve $\gamma$ subject to Assumption~\ref{Ass: Epsilon-delta assumption} that connects $x$ and $y$. Let $z\in \gamma$ with $|x-z|=\frac{1}{2}|x-y|$. Then
	\begin{align*}
		\dist(z,\Gamma) \geq \frac{\eps}{2} |y-z| \geq \frac{\eps}{4} |x-y| = 5A\rho.
	\end{align*}
	Since $\gamma$ takes its values in $\Xi$, there exists a cube $S\in \WW(\cl{\Gamma})$ with $z\in S$. We deduce $\diam(S) \geq \frac{1}{5} \dist(z,\Gamma) \geq A \rho$. As in the previous lemma, there is some cube $R\in \mathcal{G}$ that contains $z$ and consequently is a subcube of $S$. To conclude that $R\in \Sigma$ we must ensure that $\gamma$ cannot escape $\Omega_m$. To this end, let us assume that $z \not\in \Omega_m$. Since $x\in \Omega_m$, there would be some $\tilde z\in \gamma$ with $\tilde z \in \bd \Omega_m$. Since $\tilde z\not\in \Gamma$ by definition of $\gamma$, we must have $\tilde z \in D$. Now recall that by definition of $B_{2s}$ it holds $\dist(x,D) > \frac{\kappa}{2}$. On imposing the constraint $\rho \leq \frac{\eps^2 \kappa}{40A}$ we then get the contradiction
	\begin{align*}
		\dist(x,D) \leq |x-\tilde z| \leq \mathrm{length}(\gamma) \leq \frac{20A}{\eps^2} \rho \leq \frac{\kappa}{2} < \dist(x,D).
	\end{align*}
	So, indeed, $z\in \Omega_m$ and therefore $R\in \Sigma$.
	Denote the center of $R$ by $x_R$ and estimate
	\begin{align*}
		|x-x_R|_\infty \leq |x-z| + |x_R-z|_\infty \leq \left( \frac{10 A}{\eps}+\frac{1}{2}\right) \rho.
	\end{align*}
	So, if we choose $B\geq \frac{20 A}{\eps}+1$, then $x\in \hat R$.
\end{proof}
We have already mentioned that the collection $\Sigma$ is a substitute for $\WW_i$, so it is not surprising that we want to connect nearby cubes in $\Sigma$ by a touching chain of cubes (which we allow to be in $\Sigma'$) of bounded length.
\begin{lemma}
\label{Lem: touching chain for Sigma}
	There are constants $C_1=C_1(B,d,\eps)>0$, $C_2=C_2(d,\eps)>0$, and $C_3=C_3(B,d,\delta)>0$ such that any pair of cubes $R,S\in \Sigma$ with $\hat{\hat R}\cap \hat S\neq \emptyset$ can be connected by a touching chain of cubes in $\Sigma'$ whose length is controlled by $C_1$, provided that $A\geq C_2$ and $\rho \leq C_3$.
%The lemma remains valid if we replace $\hat S$ by $S$.
\end{lemma}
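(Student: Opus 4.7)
The plan is to construct the chain from a curve furnished by Assumption~\ref{Ass: Epsilon-delta assumption}. I would pick representatives $x_R \in R\cap \Omega_m$ and $y_S\in S\cap \Omega_n$ for the components guaranteed by $R, S\in \Sigma$. Because $R$ and $S$ are subcubes of Whitney cubes in $\WW(\overline{\Gamma})$ of diameter at least $A\rho$, the Whitney property~\eqref{Enum: Distance property} gives the endpoint bounds $\dist(x_R, \Gamma),\,\dist(y_S, \Gamma) \geq A\rho$. Comparing centers of the overlapping enlarged cubes $\hat{\hat R}$ and $\hat S$ yields $|x_R - y_S| \lesssim B\sqrt{d}\,\rho$, and imposing smallness of $\rho$ in terms of $B$, $d$, and $\delta$ forces $|x_R - y_S| < \delta$ if $\delta < \infty$. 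Assumption~\ref{Ass: Epsilon-delta assumption} then produces a rectifiable curve $\gamma \subseteq \Xi$ joining $x_R$ and $y_S$ of length at most $|x_R-y_S|/\eps \lesssim B\sqrt{d}\,\rho/\eps$ and satisfying~\eqref{Eq: Carrot condition}.

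The heart of the argument is to show $\dist(z,\Gamma) > \sqrt{d}\,\rho$ for every $z\in \gamma$, so that the cube $Q\in \mathcal{G}$ containing $z$ lies in $\Xi$ and hence in $\Sigma'$. I would split into three cases. If $|x_R-z| \leq A\rho/2$, then $\dist(z,\Gamma) \geq \dist(x_R,\Gamma) - |x_R-z| \geq A\rho/2 > \sqrt{d}\,\rho$ once $A > 2\sqrt{d}$. The case $|y_S-z| \leq A\rho/2$ is symmetric. In the remaining middle regime where both $|x_R-z|,\,|y_S-z| > A\rho/2$, the carrot condition gives
\begin{align*}
	\dist(z,\Gamma) \geq \eps\,\frac{|x_R-z|\,|y_S-z|}{|x_R-y_S|} \geq \frac{\eps A^2 \rho^2}{4\,|x_R-y_S|},
\end{align*}
which combined with the bound $|x_R-y_S| \lesssim B\sqrt{d}\,\rho$ and the smallness constraint on $\rho$ exceeds $\sqrt{d}\,\rho$ once $A$ is chosen large in terms of $d$ and $\eps$.

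Finally, enumerating the cubes of $\mathcal{G}$ hit by $\gamma$ in the order of first entry (perturbing $\gamma$ slightly near lower-dimensional dyadic faces if necessary) produces a touching chain in $\Sigma'$ running from the $\mathcal{G}$-cube containing $x_R$, which is $R$, to the one containing $y_S$, which is $S$. A volume count shows that the cardinality of the chain is bounded by a dimensional constant times $\mathrm{length}(\gamma)/\rho \lesssim B/\eps$, yielding the quantitative bound $C_1 = C_1(B,d,\eps)$.

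The main obstacle is the distance estimate in the middle regime. A direct application of~\eqref{Eq: Carrot condition} appears to force $A$ to grow with $B$ because $|x_R-y_S|$ can be of order $B\rho$; the delicate point is to absorb this $B$-dependence into the smallness constraint on $\rho$ that is anyhow required in order to invoke Assumption~\ref{Ass: Epsilon-delta assumption}, so that the final condition on $A$ depends only on $d$ and $\eps$ as asserted.
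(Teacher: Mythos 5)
Your overall strategy (get a curve from Assumption~\ref{Ass: Epsilon-delta assumption} connecting points in $R$ and $S$, show every point on it stays a distance $\gtrsim \rho$ from $\Gamma$, then run a counting argument) is the same as the paper's, and your treatment of the near-endpoint cases is fine. But the middle-regime estimate is not just a "delicate point" — it is a genuine gap, and your hope of absorbing the $B$-dependence into the constraint on $\rho$ cannot work. Plugging $|x_R-y_S| \simeq B\sqrt{d}\rho$ into your bound gives
\begin{equation*}
	\dist(z,\Gamma) \geq \frac{\eps A^2 \rho^2}{4\,|x_R-y_S|} \gtrsim \frac{\eps A^2}{B\sqrt d}\,\rho,
\end{equation*}
and you need this to exceed $\sqrt{d}\rho$. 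Both sides scale linearly in $\rho$, so shrinking $\rho$ has no effect whatsoever; the only way to close this is $A^2 \gtrsim B d/\eps$, i.e.\ $A$ depends on $B$, which is exactly what the lemma forbids.

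The fix, which is what the paper does, is to \emph{not} lower-bound both $|x_R-z|$ and $|y_S-z|$ by $A\rho/2$ and then divide by $|x_R-y_S|$. Instead, assume without loss of generality that $|x_R-z|\leq |y_S-z|$, which gives $|y_S-z|\geq |x_R-y_S|/2$. Then the carrot condition factors as
\begin{equation*}
	\dist(z,\Gamma) \geq \eps\,|x_R-z|\cdot\frac{|y_S-z|}{|x_R-y_S|} \geq \frac{\eps}{2}\,|x_R-z|,
\end{equation*}
so the troublesome $|x_R-y_S|\simeq B\rho$ cancels entirely. Now split at the $A$- and $B$-independent threshold $|x_R-z|\leq \frac{4\sqrt{d}}{\eps}\rho$ rather than at $A\rho/2$: in the far regime the above immediately gives $\dist(z,\Gamma) > 2\sqrt{d}\rho$, and in the near regime you use your endpoint bound $\dist(x_R,\Gamma)\geq A\rho$ together with $|x_R-z|\leq\frac{4\sqrt{d}}{\eps}\rho$, which requires only $A\geq \sqrt{d}(4/\eps+2)$ — a condition depending on $d$ and $\eps$ alone. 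With this case split the rest of your argument (the counting bound $C_1(B,d,\eps)$ and the $\rho\leq C_3(B,d,\delta)$ constraint to invoke the $(\eps,\delta)$-path) goes through as you describe.
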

\begin{proof}
	By definition of $\Sigma$ we can pick $x\in R\cap \Omega_m$ and $y\in S\cap \Omega_\ell$. By assumption we moreover fix $z\in \hat{\hat R} \cap \hat S$. Let $x_R$, $y_S$ denote the centers of $R$ and $S$, then
	\begin{align} \label{Eq: Dist x and y in chain connect}
		|x-y| &\leq \sqrt{d}\bigl(|x-x_R|_\infty + |x_R-z|_\infty + |y_S-z|_\infty + |y-y_S|_\infty \bigr)
		< \sqrt{d} (1+2B) \rho. 
	\end{align}
	If we choose $\rho \leq \frac{\delta}{\sqrt{d}(1+2B)}$, then $|x-y| < \delta$ and we can connect $x$ and $y$ by a curve $\gamma$ subject to Assumption~\ref{Ass: Epsilon-delta assumption}. Let $z\in \gamma$ and pick $Q\in \mathcal{G}$ such that $z\in Q$. By symmetry we assume without loss of generality that $|x-z| \leq |y-z|$. This implies, in particular, that $|x-y| \leq 2 |y-z|$.
	
	\emph{Case 1}: $|x-z| \leq \frac{4\sqrt{d}}{\eps} \rho$. Then, since $R\in \Sigma$, we find $\tilde Q \in \WW(\cl{\Gamma})$ with $R\subseteq \tilde Q$ and $\diam(\tilde Q) \geq A \rho$. Using $x\in R\subseteq \tilde Q$, it follows
	\begin{align*}
		\dist(x,\Gamma) \geq \dist(\tilde Q, \Gamma) \geq \diam(\tilde Q) \geq A\rho,
	\end{align*}
	consequently
	\begin{align*}
		\dist(Q,\Gamma) \geq \dist(x,\Gamma)-|x-z|-\diam(Q) \geq \Bigl(A - \frac{4\sqrt{d}}{\eps} - \sqrt{d}\Bigr) \rho.
	\end{align*}
	We choose $A \geq \sqrt{d}(4/\eps+2)$ to conclude $\dist(Q,\Gamma) \geq \diam(Q)$, in particular $Q \in \Sigma'$.
	
	\emph{Case 2}: $|x-z| > \frac{4\sqrt{d}}{\eps} \rho$. We calculate using~\eqref{Eq: Carrot condition}
	\begin{align*}
		\dist(Q,\Gamma) \geq \dist(z,\Gamma) - \diam(Q) \geq \frac{\eps}{2} |x-z| - \sqrt{d} \rho > \diam(Q).
	\end{align*}
	So, as before, $Q\in \Sigma'$.
	
	Taking~\eqref{Eq: Length condition} and~\eqref{Eq: Dist x and y in chain connect} into account, we get $\mathrm{length}(\gamma) + \diam(Q) \leq \sqrt{d} \bigl(\frac{2B+1}{\eps}+1\bigr) \rho$ and $Q\subseteq \cl{\B}(x,\mathrm{length}(\gamma)+\diam(Q))$. By the usual counting argument that we have already used in Lemma~\ref{Lem: Basic chain lemma} it follows that the number of such cubes $Q$ can be bounded by a constant depending only on $B$, $d$, and $\eps$. We select a touching chain out of that collection of cubes to conclude the proof.% in the case with $\hat S$.
	%The proof for $S$ is the same and it would even be possible to choose $C_3$ slightly larger in that case.
\end{proof}
\begin{remark}
\label{Rem: touching chain for Sigma stays in B3s}
	There is a constant $C=C(B,d,\eps,s)$ such that for $R,S\in \Sigma$ as in the foregoing lemma with $R\cap B_{2s} \neq \emptyset$ we have that the connecting chain stays in $\tilde B_{3s}$ provided $\rho \leq C$. Indeed, let $\tilde C$ be the constant $C_1$ from that lemma with dependence on $B$, $d$, and $\eps$. If $x$ is contained in some cube from the connecting chain between $R$ and $S$ and $y\in R\cap B_{2s}$, then $\dist(x,\bd \Omega) \leq \dist(y, \Gamma) + \tilde C \sqrt{d} \rho < 2s + \tilde C \sqrt{d} \rho$, so the claim follows if we choose $\rho \leq s (\tilde C \sqrt{d})^{-1}$.
\end{remark}
So far, we have seen that near $\Gamma$ and away from $D$ we can reasonably cover the components $\Omega_m$. The next two lemmas show that we will not have to bother with the components $\Upsilon_m$.
\begin{lemma}
\label{Lem: partition of unity stays away from Upsilon}
	There is a constant $C=C(B,d,\delta,\eps,\kappa)>0$ such that for any $R\in \Sigma$ with $\hat{\hat R} \cap B_{2s} \neq \emptyset$ it follows $\hat{\hat R} \cap \bigcup_m \Upsilon_m = \emptyset$ provided that $\rho \leq C$.
\end{lemma}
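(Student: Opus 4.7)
The plan is a proof by contradiction. Suppose that for some value of $\rho$ (to be constrained below) there exist $R\in\Sigma$ with $\hat{\hat R}\cap B_{2s}\neq\emptyset$ and a point $x\in \hat{\hat R}\cap \Upsilon_m$ for some $m$. Fix in addition a witness point $y\in \hat{\hat R}\cap B_{2s}$ and, using $R\in \Sigma$, a point $w\in R\cap \Omega_n$, where $\Omega_n$ is a connected component of $\Omega$ whose boundary meets $\Gamma$. Since all three points lie in $\hat{\hat R}=2B\,R$ and $R$ has sidelength $\rho$, any pair among $w,x,y$ has distance at most the diameter $2B\sqrt{d}\,\rho$.

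The basic idea is that $y\in B_{2s}$ pushes $w$ uniformly away from $D$, while the only way to travel from $\Omega_n$ to $\Upsilon_m$ through the larger set $\Xi$ is to cross $D$, which on the contrary pulls $w$ close to $D$. To make this precise, impose $\rho\leq \delta/(2B\sqrt{d})$ so that $|w-x|<\delta$ and Assumption~\ref{Ass: Epsilon-delta assumption} applies; it delivers a rectifiable curve $\gamma\subseteq \Xi$ joining $w$ and $x$ of length at most $\eps^{-1}|w-x|\leq 2B\sqrt{d}\,\rho/\eps$.

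The main step, and the main obstacle to handle carefully, is to identify an exit point of $\gamma$ lying in $D$. Parametrize $\gamma\colon[0,1]\to\Xi$ with $\gamma(0)=w$, $\gamma(1)=x$ and set $T\coloneqq \sup\{t\in[0,1] : \gamma([0,t])\subseteq \Omega_n\}$. Openness of $\Omega_n$ forces $T>0$, while disjointness of the components of $\Omega$ (so that $x\in \Upsilon_m$ cannot lie in $\overline{\Omega_n}\cap \Omega$) forces $T<1$. Hence $p\coloneqq \gamma(T)\in \overline{\Omega_n}\setminus \Omega_n\subseteq \bd \Omega_n\subseteq \bd \Omega$. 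Because $\gamma\subseteq \Xi$ avoids $\overline{\Gamma}$ and $\bd\Omega = D\cup\Gamma$ with $D\cap\Gamma=\emptyset$, we conclude $p\in \bd\Omega\setminus\overline{\Gamma}\subseteq D$, so that $\dist(w,D)\leq |w-p|\leq \mathrm{length}(\gamma)\leq 2B\sqrt{d}\,\rho/\eps$.

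On the other hand, $y\in B_{2s}$ yields $\dist(y,D)>\kappa/2$, hence
\[
\dist(w,D)\;\geq\; \dist(y,D)-|w-y|\;>\;\frac{\kappa}{2}-2B\sqrt{d}\,\rho.
\]
Combining the two bounds gives $\kappa/2 < 2B\sqrt{d}\,\rho(1+\eps^{-1})\leq 4B\sqrt{d}\,\rho/\eps$ (using $\eps\in(0,1]$), which is violated as soon as $\rho\leq \eps\kappa/(8B\sqrt{d})$. Therefore the choice
\[
C\;\coloneqq\;\min\!\Big(\frac{\delta}{2B\sqrt{d}},\;\frac{\eps\kappa}{8B\sqrt{d}}\Big)
\]
produces the desired contradiction and proves the lemma, with the required dependence on $B,d,\delta,\eps,\kappa$.
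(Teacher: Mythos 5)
Your proof is correct and takes essentially the same approach as the paper: connect a point of $R\cap\Omega_n$ to the point of $\hat{\hat R}\cap\Upsilon_m$ by a curve in $\Xi$, observe that the curve must cross $D$ since it changes connected components of $\Omega$ while avoiding $\overline{\Gamma}$, and derive a contradiction between the resulting upper bound on $\dist(\cdot,D)$ from~\eqref{Eq: Length condition} and the lower bound coming from the witness point in $B_{2s}$. One cosmetic nitpick: since $\hat{\hat R}$ is a closed cube of diameter exactly $2B\sqrt{d}\rho$, the constraint $\rho\leq \delta/(2B\sqrt{d})$ only gives $|w-x|\leq\delta$, not the strict inequality required by Assumption~\ref{Ass: Epsilon-delta assumption}; replacing it by, say, $\rho\leq \delta/(4B\sqrt{d})$ (as the paper does) removes this.
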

\begin{proof}
	Assume there exists $y\in \hat{\hat R} \cap \Upsilon_m$. Furthermore, let $x\in R\cap \Omega_\ell$. It holds $|x-y| \leq 2B \sqrt{d} \rho$, so $x$ and $y$ can be connected by a path in $\Xi$ subject to Assumption~\ref{Ass: Epsilon-delta assumption} if we ensure $\rho \leq (4\sqrt{d}B)^{-1}\delta$, and its length can be controlled by $\mathrm{length}(\gamma) \leq \eps^{-1} |x-y|$ according to~\eqref{Eq: Length condition}. Since $x$ and $y$ are in different connected components by assumption, there must be a point $z\in \gamma$ which satisfies $z\in D$. By assumption we may pick some $\tilde z\in \hat{\hat R} \cap B_{2s}$. Then
	\begin{align*}
		\dist(x,D) \geq \dist(\hat{\hat R}, D) \geq \dist(\tilde z, D)-\diam(\hat{\hat R}) > \kappa/2- 2B \sqrt{d} \rho.
	\end{align*}
	On the other hand,
	\begin{align*}
		|x-z| \leq \mathrm{length}(\gamma) \leq \frac{2B \sqrt{d}}{\eps} \rho.
	\end{align*}
	If we choose $\rho \leq \frac{\eps \kappa}{16\sqrt{d} B}$ as well as $\rho \leq \frac{\kappa}{8B\sqrt{d}}$, then we arrive at the contradiction
	\begin{align*}
		\dist(x,D) \leq |x-z| \leq \frac{\kappa}{8} < \frac{\kappa}{4} \leq \dist(x, D). &\qedhere
	\end{align*}
\end{proof}
\begin{lemma}
\label{Lem: Ignore f on Upsilon cap B2s}
	Let $x\in B_{2s} \cap \bigcup_m \Upsilon_m$, then $x\not \in \supp(f)$.
\end{lemma}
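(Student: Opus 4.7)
The plan is to show that every point $x\in B_{2s}\cap \Upsilon_m$ satisfies $\dist(x,D) < 2s$. Since $s<\kappa/2$ and $\kappa=\dist(\supp(f),D)$, this forces $x\notin \supp(f)$.

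First, I would record the geometric fact that $\bd \Upsilon_m \subseteq D$. This uses that $\Upsilon_m$ is a connected component of the open set $\Omega$, so $\bd\Upsilon_m \subseteq \bd\Omega = D\cup \Gamma$ (if a boundary point of $\Upsilon_m$ were an interior point of $\Omega$, it would lie in some component, and that component would then have to coincide with $\Upsilon_m$, a contradiction), combined with the defining property $\bd\Upsilon_m \cap \Gamma=\emptyset$ of the family $(\Upsilon_m)_m$.

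Next, given $x\in B_{2s}\cap \Upsilon_m$, the definition of $B_{2s}$ provides some $y\in \Gamma$ with $|x-y|<2s$. Consider the straight segment $\sigma(t) \coloneqq (1-t)x + ty$ for $t\in [0,1]$. Since $x\in \Upsilon_m$ and $y\in \bd\Omega$ lies outside the open set $\Upsilon_m$, the number $t_0 \coloneqq \inf\{t\in [0,1] : \sigma(t)\notin \Upsilon_m\}$ is well defined and strictly positive. The openness of $\Upsilon_m$ together with the infimum property forces $z \coloneqq \sigma(t_0)\in \cl{\Upsilon_m}\setminus \Upsilon_m = \bd \Upsilon_m$, so by the first paragraph $z\in D$. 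Consequently,
\begin{align*}
\dist(x,D) \leq |x-z| = t_0 |x-y| \leq |x-y| < 2s.
\end{align*}

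Finally, since $s<\kappa/2$ and $\kappa=\dist(\supp(f),D)$, the estimate above yields $\dist(x,D) < \kappa \leq \dist(x,\supp(f))$ whenever $x$ would belong to $\supp(f)$, a contradiction. Hence $x\notin \supp(f)$, which is the claim. There is no genuine obstacle here; the whole argument is an easy topological observation plus a comparison of the two scales $2s$ and $\kappa$ appearing in the definitions of $B_{2s}$ and $\supp(f)$.
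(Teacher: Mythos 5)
Your argument is correct and follows the same route as the paper: pick $y\in\Gamma$ with $|x-y|<2s$, locate $z\in\bd\Upsilon_m\subseteq D$ on the segment from $x$ to $y$ by a connectedness argument, and compare the scales $2s$ and $\kappa$. The only slip is in the final sentence, where the chain should read $\dist(x,D)<2s<\kappa\leq\dist(x,D)$ for $x\in\supp(f)$ (since $\dist(x,D)\geq\dist(\supp(f),D)=\kappa$), not $\kappa\leq\dist(x,\supp(f))$; the intended contradiction is nevertheless clear.
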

\begin{proof}
	Let $x\in B_{2s}\cap \Upsilon_m$, then there is $y\in \Gamma$ such that $|x-y| < 2s$. Since $y\not\in \cl{\Upsilon_m}$, there is $z\in \bd \Upsilon_m \subseteq D$ on the connecting line between $x$ and $y$. Thus,
	\begin{align*}
		\dist(x,D)\leq |x-z| \leq |x-y| < 2s < \kappa = \dist(\supp(f),D).
	\end{align*}
	Consequently, $x\not\in \supp(f)$.
\end{proof}

\subsection*{Construction of the approximation and estimates}

Let $\psi$ be a cutoff function valued in $[0,1]$ which is $1$ on $\overline{B_s}$, supported in $N_s(\overline{B_{s}})$, and satisfies $|\partial^\alpha \psi| \lesssim s^{-|\alpha|}$ for $|\alpha|\leq k$. Moreover, fix an enumeration $(R_j)_j$ of $\Sigma$ and let $\varphi_j$ be a partition of unity on $\bigcup_j \hat R_j$ with $\supp(\varphi_j) \subseteq \hat{\hat R}_j$ and $|\partial^\alpha \varphi_j| \lesssim \rho^{-|\alpha|}$. The implicit constants depend on $\alpha$, $d$, and $B$. Note that according to Lemma~\ref{Lem: Enlarged cubes cover whole boundary strip} this partition of unity exists in particular on $B_{2s}\cap \bigcup_m \Omega_m$.

Now we may construct the approximation $g$ of $f$ for Proposition~\ref{Prop: Approximation}. With Lemma~\ref{Lem: balls for mollification} in mind, choose $t\in (0,s/2)$ small enough that
\begin{align}
\label{Eq: g_2 estimate}
	\|f-E_0f \ast \Phi_t\|_{\W^{k,p}(\Omega\setminus \overline{B_s})} \leq \eta s^k,
\end{align}
where $\Phi_t$ is a mollifier function supported in $\B(0,t)$. Recall the notation for adapted polynomials introduced in Remark~\ref{Rem: Adapted Polynomial} and put 
\begin{align*}
	g_1\coloneqq \sum_j (E_0 f)_{R_j} \varphi_j, \qquad g_2\coloneqq E_0f \ast \Phi_t, \qquad \text{and} \qquad g\coloneqq \psi g_1 + (1-\psi) g_2.
\end{align*}
With a further constraint on $\rho$ we see that $g_1$ vanishes near $D$.
\begin{lemma}
\label{Lem: g_1 vanishes near D}
	There exists a constant $C=C(d,\kappa)>0$ such that $\dist(\supp(g_1),D) \geq 3\kappa/4$, provided $\rho \leq C$.
\end{lemma}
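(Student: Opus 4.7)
The plan is to trace how the support of $g_1$ is generated by its building blocks $(E_0 f)_{R_j}\varphi_j$ and then to exploit that the adapted polynomial vanishes whenever the underlying function does. Concretely, I would first observe that the projection $f \mapsto (f)_{R_j}$ from Lemma~\ref{Lem: Polynomial Poincare} is linear, so $(E_0 f)_{R_j} \equiv 0$ as soon as $E_0 f$ vanishes identically on $R_j$. Hence any $x \in \supp(g_1)$ must lie in $\supp(\varphi_j) \subseteq \hat{\hat R}_j$ for some index $j$ with $(E_0 f)_{R_j} \not\equiv 0$, and for that index there exists $y \in R_j$ with $E_0 f(y) \neq 0$.

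Since $E_0 f$ is the zero extension of $f$ and $\supp(f) \subseteq \Omega$ satisfies $\dist(\supp(f), D) = \kappa$, the point $y$ automatically obeys $\dist(y, D) \geq \kappa$. On the other hand, $x$ and $y$ both belong to the cube $\hat{\hat R}_j = 2B R_j$ of sidelength $2B\rho$, so $|x - y| \leq 2B\sqrt{d}\,\rho$. Combining these via the triangle inequality gives
\begin{align*}
\dist(x, D) \geq \dist(y, D) - |x-y| \geq \kappa - 2B\sqrt{d}\,\rho.
\end{align*}

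Thus imposing the single constraint $\rho \leq \kappa/(8B\sqrt{d})$ yields $\dist(x, D) \geq 7\kappa/8 \geq 3\kappa/4$, which is the claim. The resulting bound on $\rho$ depends only on $B$, $d$, and $\kappa$, and since $B$ has been fixed in earlier lemmas, the announced dependence $C = C(d, \kappa)$ is consistent with the bookkeeping of previously chosen parameters. There is no real obstacle here; the only subtlety is the initial observation that the projection respects vanishing, which is immediate from its linearity (and from the fact that it is defined by integrating against $f$ on $R_j$, cf.~Remark~\ref{Rem: Adapted Polynomial}).
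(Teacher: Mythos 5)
Your proof is correct and takes essentially the same route as the paper's: the paper phrases it as a contrapositive (take $x$ with $\dist(x,D)\le 3\kappa/4$ and show $g_1(x)=0$), but the key ingredients—linearity of the adapted-polynomial projection to propagate vanishing from $E_0f$ on $R_j$ to $(E_0f)_{R_j}$, and a triangle inequality between $x\in\supp(\varphi_j)\subseteq\hat{\hat R}_j$ and a point of $R_j$—are identical. One tiny slip: the constraint $\rho\le\kappa/(8B\sqrt d)$ gives $2B\sqrt d\,\rho\le\kappa/4$ and hence $\dist(x,D)\ge 3\kappa/4$ directly (not $7\kappa/8$), which still matches the claim.
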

\begin{proof}
	Let $x\in \IR^d$ with $\dist(x,D) \leq \frac{3\kappa}{4}$. If $x\in \supp(\varphi_j)$ then fix some $y\in R_j$. We estimate (with $z$ the center of $R_j$)
	\begin{align*}
		\dist(y,D) \leq |y-z| + |x-z| + \dist(x,D) \leq \frac{1}{2}\sqrt{d}\rho + B \sqrt{d} \rho + \frac{3\kappa}{4}. 
	\end{align*}
	Chose $\rho \leq \frac{\kappa}{4 (1+2B)\sqrt{d}}$, then $\dist(y,D) \leq \frac{7}{8}\kappa < \dist(\supp(f),D)$, so $y\not\in \supp(f)$ and $(f)_{R_j} = 0$ by linearity of the projection. But this means $g_1(x) = 0$.
\end{proof}

\begin{proof}[{Proof of Proposition~\ref{Prop: Approximation}}]
	Assume that all constraints on the parameters collected in this section are fulfilled. We split the proof into several steps.
	
	\textbf{Step 1}: \emph{$g$ is well-defined and smooth}. We have already noticed after the definition of $\Sigma$ that we can ensure that all its cubes are contained in $\Omega_0$, so the usage of polynomial approximations is justified and yields the smooth function $g_1$ on $\IR^d$. By definition of the mollification, $g_2$ is a smooth function in $\Omega \setminus \overline{B_s}$. If $x\in \Omega$ with $\dist(x,D) \leq \kappa/2$, then we get as in Lemma~\ref{Lem: balls for mollification} that $\B(x,\kappa/4) \subseteq \B(z, 3\kappa/4) \subseteq \Omega_0$ for some $z\in D$ and $E_0 f$ vanishes on this ball, so by definition of the mollification, $g_2$ vanishes in that neighborhood of $D$. Together with the knowledge on the support of $1-\psi$ we infer that $(1-\psi) g_2$ can be extend by zero to a smooth function on $\IR^d$.
	
	\textbf{Step 2}: \emph{$\dist(\supp(g),D) \geq \kappa/2$}. First, we have $\dist(\supp(g_1),D) \geq 3\kappa/4$ by Lemma~\ref{Lem: g_1 vanishes near D}.
	On the other hand, we have already noticed in Step~1 that $\dist(\supp(g_2), D) \geq \kappa/2$, which in total gives a distance of at least $\kappa/2$ to $D$.
	
	\textbf{Step 3}: \emph{Split up the terms for estimation}. Let $\alpha$ be some multi-index with $|\alpha| \leq k$. Then
	\begin{align*}
		\partial^\alpha (f-g) &= \partial^\alpha (\psi(f-g_1)) + \partial^\alpha ((1-\psi)(f-g_2)) \\
		&= \sum_{\beta \leq \alpha} c_{\alpha,\beta} \left( \partial^{\alpha-\beta} \psi \partial^\beta (f-g_1) + \partial^{\alpha-\beta} (1-\psi) \partial^\beta (f-g_2) \right) \\
		&=: \sum_{\beta \leq \alpha} c_{\alpha,\beta} (\mathrm{I}_{\alpha,\beta} + \mathrm{II}_{\alpha,\beta}).
	\end{align*}
	Clearly, it suffices to estimate for fixed $\alpha$ and $\beta$ the terms $\mathrm I_{\alpha,\beta}$ and $\mathrm{II}_{\alpha,\beta}$ in the $\L^p(\Omega)$-norm. The estimate for $\mathrm{II}_{\alpha,\beta}$ is possible in a uniform manner whereas for $\mathrm I_{\alpha,\beta}$ we will have to carefully consider different relations between $|\alpha|$, $|\beta|$, and $k$.
	
	\textbf{Step 4}: \emph{Estimate of $\mathrm{II}_{\alpha,\beta}$}. Owing to~\eqref{Eq: g_2 estimate}, this term is under control on keeping $|\partial^{\alpha-\beta} (1-\psi)| \lesssim s^{-|\alpha-\beta|} \leq s^{-k}$ in mind (recall $s<1$).
	
	\textbf{Step 5}: \emph{Reduction of the area of integration in $\mathrm I_{\alpha,\beta}$}. Since the support of $\psi$ is contained in $N_s(\overline{B_s})$, we only have to consider this region. Assume $x\in N_s(\overline{B_s}) \setminus B_{2s}$. Then we must have $\dist(x,D) \leq \kappa/2$. But in this region $f$ and $g_1$ vanish according to the definition of $\kappa$ and Step~2. So we only have to deal with $B_{2s}$. Furthermore, $f$ vanishes on $B_{2s} \cap \bigcup_m \Upsilon_m$ according to Lemma~\ref{Lem: Ignore f on Upsilon cap B2s} and the same is true for $g_1$ owing to Lemma~\ref{Lem: partition of unity stays away from Upsilon}. So in summary, we only need to estimate the term $\mathrm I_{\alpha,\beta}$ on $B_{2s} \cap \bigcup_m \Omega_m$.
	
	\textbf{Step 6}: \emph{Estimate of $\mathrm I_{\alpha,\beta}$ if $|\beta|<|\alpha|$}. Since $\psi = 1$ on $B_s$ and $|\alpha-\beta| \neq 0$, we even only have to estimate the $\L^p$ norm over $(B_{2s}\setminus B_s) \cap \bigcup_m \Omega_m$. Write $M$ for this set. The fact $(B_{2s}\setminus B_s) \cap N_s(\Gamma) = \emptyset$ allows us to use Lemma~\ref{Lem: Sigma covers strip away from boundary} to cover $M$ by cubes from $\Sigma$ to calculate
	\begin{align*}
		\| \partial^{\alpha-\beta} \psi \partial^\beta (f-g_1) \|_{\L^p(M)}^p &\leq \sum_{\substack{R_j\in \Sigma \\ R_j \cap B_{2s} \neq \emptyset}} s^{p(|\beta|-|\alpha|)} \|\partial^\beta \bigl(f-\sum_{\substack{R_k\in \Sigma \\ \hat{\hat R}_k \cap R_j \neq \emptyset}} (E_0 f)_{R_k} \varphi_k \bigr) \|_{\L^p(R_j)}^p.
	\end{align*}
	Using that $(\varphi_k)_k$ is a partition of unity on $R_j$, we derive using the Leibniz rule that on $R_j$ we have
	\begin{align*}
		\partial^\beta \sum_{\substack{R_k\in \Sigma \\ \hat{\hat R}_k \cap R_j \neq \emptyset}} (E_0 f)_{R_k} \varphi_k = \partial^\beta (E_0 f)_{R_j} + \partial^\beta  \sum_{\substack{R_k\in \Sigma \\ \hat{\hat R}_k \cap R_j \neq \emptyset}} \bigl[ (E_0 f)_{R_k} - (E_0 f)_{R_j} \bigr] \varphi_k.
	\end{align*}
	Using Lemma~\ref{Lem: Polynomial Poincare} we can estimate the norm of $\partial^\beta \bigl[ f-(E_0 f)_{R_j} \bigr]$ against $\rho^{k-|\beta|} \| \nabla^k f \|_{\L^p(R_j)}$. From $\rho \leq s \leq 1$ we obtain $s^{|\beta|-|\alpha|} \rho^{k-|\beta|} \leq 1$, so we infer with~\eqref{eq: Wkp estimate f in boundary strip} that
	\begin{align*}
		\sum_{\substack{R_j\in \Sigma \\ R_j \cap B_{2s} \neq \emptyset}} s^{p(|\beta|-|\alpha|)} \| \partial^\beta \bigl[ f-(E_0 f)_{R_j} \bigr] \|_{\L^p(R_j)}^p \lesssim \| \nabla^k f \|_{\L^p(\tilde B_{3s} \cap \Omega)}^p \leq \eta^p.
	\end{align*}
	For the second term, we first expand using the Leibniz rule to obtain 
	\begin{align*}
		\partial^\beta  \sum_{\substack{R_k\in \Sigma \\ \hat{\hat R}_k \cap R_j \neq \emptyset}} \bigl[ (E_0 f)_{R_k} - (E_0 f)_{R_j} \bigr] \varphi_k = \sum_{\substack{R_k\in \Sigma \\ \hat{\hat R}_k \cap R_j \neq \emptyset}} \sum_{\gamma \leq \beta} c_{\beta,\gamma} \partial^{\beta-\gamma} \bigl[ (E_0 f)_{R_k} - (E_0 f)_{R_j} \bigr] \partial^\gamma \varphi_k.
	\end{align*}
	According to Lemma~\ref{Lem: touching chain for Sigma} we can apply Lemma~\ref{Lem: Estimating differences of averages} to the effect that
	\begin{align*}
		\| \partial^{\beta-\gamma} \bigl[ (E_0 f)_{R_k} - (E_0 f)_{R_j} \bigr] \|_{\L^p(R_j)} \lesssim \rho^{k-|\beta|+|\gamma|} \|\nabla^k f \|_{\L^p(G_{j,k})},
	\end{align*}
	where $G_{j,k}$ denotes the connecting chain from Lemma~\ref{Lem: touching chain for Sigma} between $R_j$ and $R_k$. The $\rho$ factor compensates for $s^{|\beta|-|\alpha|}$ and $|\partial^\gamma \varphi_k|$ as before. The sums in $k$ and $j$ add up by similar (but simpler) overlap considerations as already seen in Section~\ref{Sec: The extension operator} for $F_{j,k}$. Finally, since $G_{j,k}$ stays in $\tilde B_{3s}$ by Remark~\ref{Rem: touching chain for Sigma stays in B3s}, we get an estimate against $\eta$ as was the case for the first term.
	
	\textbf{Step 7}: \emph{Estimate of $\mathrm I_{\alpha,\beta}$ if $|\beta|=|\alpha|$}. The estimate follows the same ideas as in Step~6, so we only mention which modifications are needed. 
	
	First of all, we have to estimate over the whole $B_{2s} \cap \bigcup_m \Omega_m$. According to Lemma~\ref{Lem: Enlarged cubes cover whole boundary strip}, this set can be covered by the enlarged cubes $\hat{R}_j$. As there are no derivatives on $\psi$, this term can be ignored. For the $\L^p(\hat{R}_j)$ norm of 
	\begin{align*}
		\partial^\beta  \sum_{\substack{R_k\in \Sigma \\ \hat{\hat R}_k \cap R_j \neq \emptyset}} \bigl[ (E_0 f)_{R_k} - (E_0 f)_{R_j} \bigr] \varphi_k
	\end{align*}
	we use Lemma~\ref{Lem: Polynomial norm on different cubes} to estimate
	\begin{align*}
		\| \partial^{\beta-\gamma} \bigl[ (E_0 f)_{R_k} - (E_0 f)_{R_j} \bigr] \|_{\L^p(\hat{R}_j)} \lesssim \| \partial^{\beta-\gamma} \bigl[ (E_0 f)_{R_k} - (E_0 f)_{R_j} \bigr] \|_{\L^p(R_j)},
	\end{align*}
	where the implicit constant introduces a dependence on $B$ (which determines $\kappa$ in that lemma). Then this term can be handled as in Step~6.
	
	For the term $\partial^\beta \bigl[ f-(E_0 f)_{R_j} \bigr]$ we crudely apply the triangle inequality. Then we can estimate $\partial^\beta f$ directly with~\eqref{eq: Wkp estimate f in boundary strip}, and for $\partial^\beta (E_0 f)_{R_j}$ we estimate with Lemma~\ref{Lem: Polynomial norm on different cubes} and Lemma~\ref{Lem: Polynomial Poincare} that
	\begin{align*}
		\| \partial^\beta (E_0 f)_{R_j} \|_{\L^p(\hat{R}_j)} \lesssim \| \partial^\beta (E_0 f)_{R_j} \|_{\L^p(R_j)} \lesssim \| \nabla^k f \|_{\L^p(R_j)}.
	\end{align*}
	
	\textbf{Step 8}: \emph{Approximation by compactly supported functions}. As we have seen in the previous steps, $g$ is an approximation to $f$ that satisfies all properties but the compact support. But if we multiply $g$ with a cutoff $\psi_n$ from $\B(0,n)$ to $\B(0,2n)$ then this sequence does the job. 
\end{proof}

\section{Conclusion of the proof of Theorem~\ref{Thm: Extension theorem for large radius}}
\label{Sec: Conclusion of the proof}

\noindent First, we show that the extension of a compactly supported function in $\C_D^\infty(\IR^d)\cap \W^{k,p}(\Omega)$ constructed in Section~\ref{Sec: The extension operator} is weakly differentiable up to order $k$. More precisely, we show this for the larger class $\C_D^\infty(\IR^d)\cap \W^{k,\infty}(\Omega)$, which makes this result also applicable for Section~\ref{Sec: First order}. Clearly, compactly supported functions in $\C_D^\infty(\IR^d)\cap \W^{k,p}(\Omega)$ belong to this class, though the inclusion is not topological. Combined with the exterior estimates from Proposition~\ref{Prop: Boundedness estimate} and the density result from Section~\ref{Sec: Approximation smooth functions}, this allows us to conclude Theorem~\ref{Thm: Extension theorem for large radius}.

\begin{proposition}
\label{Prop: Lipschitz continuous  representative}
Let $f \in \C_D^\infty (\IR^d) \cap \W^{k,\infty}(\Omega)$ and $|\alpha|\leq k-1$, then $\partial^\alpha Ef$ exists and has a Lipschitz continuous representative $g_\alpha$ which satisfies $\dist(\supp(g_\alpha), D) > 0$.
\end{proposition}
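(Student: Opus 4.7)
The plan is to construct $g_\alpha$ as a continuous function on $\IR^d$ which classically equals $\partial^\alpha Ef$ off $\bd\Omega$, vanishes in a neighborhood of $D$, and has uniformly bounded classical gradient; the Lipschitz property will then follow. By Remark~\ref{Rem: Definition of extension operator}, $Ef$ is $\C^\infty$ on $\Omega$ (being equal to $f$) and on $\overline{\Omega}^c$, so $\partial^\alpha Ef$ is classical on $\Omega \cup \overline{\Omega}^c$. Put $\kappa \coloneqq \dist(\supp f, D) > 0$. By~\eqref{Eq: Estimate distance of reflected cube}, every $Q_j \in \WW_e$ close enough to $D$ has $Q_j^* \cap \supp f = \emptyset$, so $(E_{Q_j^*}f)_{Q_j^*} \equiv 0$; together with $\supp \varphi_j \subseteq \tfrac{17}{16}\,Q_j$ and $f \equiv 0$ on $N_\kappa(D)$, this produces an open neighborhood $V$ of $D$ on which $Ef \equiv 0$, and we set $g_\alpha \coloneqq 0$ on $V$. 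This immediately gives $\dist(\supp g_\alpha, D) > 0$.

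\textbf{Continuity across $\Gamma$.} Define $g_\alpha = \partial^\alpha f$ on $\Omega \cup \Gamma$ (using $f \in \C^\infty(\IR^d)$) and $g_\alpha$ to be the classical derivative of $Ef$ on $\overline{\Omega}^c$. It suffices to show $g_\alpha(y) \to \partial^\alpha f(x)$ for $y \to x \in \Gamma \setminus V$ with $y \in Q_j \in \WW_e$ (the approach $y \to x$ along $\Omega$ is immediate from $f \in \C^\infty(\IR^d)$). As $y \to x$, property~\eqref{Enum: Distance property} and Lemma~\ref{Lem: Reflection of cubes} give $\diam(Q_j), \diam(Q_j^*) \to 0$ with $Q_j^*$ approaching $x$; since $x \notin V$, eventually $Q_j^* \subseteq \Omega$ and $E_{Q_j^*}f = f|_{Q_j^*}$. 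Using $\sum_k \varphi_k \equiv 1$ on a neighborhood of $y$ together with the Leibniz rule,
\begin{align*}
 \partial^\alpha Ef(y) - \partial^\alpha(f)_{Q_j^*}(y) = \sum_{\substack{k\colon Q_k \cap Q_j \neq \emptyset \\ \beta \leq \alpha}} c_{\alpha,\beta}\, \partial^{\alpha-\beta}\bigl[(E_{Q_k^*}f)_{Q_k^*} - (E_{Q_j^*}f)_{Q_j^*}\bigr](y)\, \partial^\beta\varphi_k(y).
\end{align*}
Lemma~\ref{Lem: Estimating differences of averages} with $\ell = k$ and $p = \infty$ bounds each polynomial difference on $Q_j^*$ by $\diam(Q_j)^{k-|\alpha-\beta|}\|f\|_{\W^{k,\infty}(\Omega)}$; transferring the pointwise bound to $y$ via Lemma~\ref{Lem: Polynomial norm on different cubes} and combining with $|\partial^\beta\varphi_k| \lesssim \diam(Q_j)^{-|\beta|}$ and~\eqref{Enum: Bound Intersecting cubes} yields a total bound $\lesssim \diam(Q_j)^{k-|\alpha|}\|f\|_{\W^{k,\infty}(\Omega)}$, which vanishes since $|\alpha| \leq k-1$. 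For the main term, \eqref{Eq: Polynomial Proj Poincare} gives $\|\partial^\alpha[f - (f)_{Q_j^*}]\|_{\L^\infty(Q_j^*)} \lesssim \diam(Q_j^*)\|f\|_{\W^{k,\infty}(\Omega)}$; transferring to $y$ and using uniform continuity of $\partial^\alpha f$ near $x$ then gives $\partial^\alpha(f)_{Q_j^*}(y) \to \partial^\alpha f(x)$. Hence $g_\alpha$ is continuous on $\IR^d$.

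\textbf{Bounded gradient and Lipschitz continuity.} The classical gradient $\nabla g_\alpha$ satisfies $|\nabla g_\alpha| \leq M \coloneqq C\|f\|_{\W^{k,\infty}(\Omega)}$ on the open set $U \coloneqq \Omega \cup \overline{\Omega}^c \cup V$: on $\Omega$ directly from $f \in \W^{k,\infty}(\Omega)$ and $|\alpha|+1 \leq k$; on $\overline{\Omega}^c$ from the $p = \infty$ case of Lemmas~\ref{Lem: Estimate on exterior cubes},~\ref{Lem: Estimate on small adjacent cubes}, and~\ref{Lem: Estimate on large adjacent cubes} applied cubewise with $\ell = |\alpha|+1$; trivially on $V$. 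The complement $\IR^d \setminus U \subseteq \Gamma \setminus V$ has Lebesgue measure zero by Lemma~\ref{Lem: Measure of Neumann boundary}. To conclude global Lipschitz continuity with constant $\lesssim M$, I would combine: the $(\eps,\delta)$-path of Assumption~\ref{Ass: Epsilon-delta assumption} for pairs $x,y \in \Omega$ with $|x-y| < \delta$ (the path lies in $\Xi \subseteq U$, so $g_\alpha$ is absolutely continuous along it with derivative bounded by $M$); an analogous touching-chain argument in $\WW(\overline{\Omega})$ for pairs inside $\overline{\Omega}^c$; continuity of $g_\alpha$ across $\Gamma$ together with $g_\alpha \equiv 0$ on $V \supseteq D$ to splice these across $\bd\Omega$; and boundedness of $g_\alpha$ to control large separations.

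\textbf{Main obstacle.} The crux of the argument is the continuity statement at $\Gamma$ (Paragraph~2): quantifying how the adapted polynomials $(E_{Q_j^*}f)_{Q_j^*}$ converge to $\partial^\alpha f(x)$ as the reflected cubes shrink to $\Gamma$, which depends essentially on the $\W^{k,\infty}$ bound on $f$ together with the Chua-type polynomial estimates of Section~\ref{Sec: The extension operator}. Once both continuity and the pointwise gradient bound on $U$ are in place, the Lipschitz conclusion is structurally a routine consequence of Assumption~\ref{Ass: Epsilon-delta assumption} and Lemma~\ref{Lem: Measure of Neumann boundary}, even though the case analysis gluing the pieces across $\bd\Omega$ is somewhat tedious.
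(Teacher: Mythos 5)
Your proposal is essentially the paper's own argument, both in architecture and in the lemmas invoked: construct a continuous representative $g_\alpha$ that vanishes near $D$, prove continuity across $\Gamma$ via the Chua-type polynomial estimates (Lemmas~\ref{Lem: Polynomial norm on different cubes}, \ref{Lem: Polynomial Poincare}, Corollary~\ref{Cor: Poincare on subcubes}, Lemma~\ref{Lem: Estimating differences of averages}), and conclude Lipschitz continuity from a bounded classical gradient off $\bd\Omega$. The organizational differences are minor. The paper runs an explicit induction on $|\alpha|$ to secure the assertion ``$\partial^\alpha Ef$ exists'' as a weak derivative before exhibiting a Lipschitz representative, while you construct all $g_\alpha$ at once and leave the weak-differentiability chain implicit. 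For the continuity estimate the paper passes through the center $y^j$ of $Q_j$, where $Ef=(E_{Q_j^*}f)_{Q_j^*}$ by the partition of unity, splitting $|g_\alpha(x)-g_\alpha(y)|$ into three terms; you instead compare $\partial^\alpha Ef(y)$ directly with $\partial^\alpha(f)_{Q_j^*}(y)$ and then with $\partial^\alpha f(x)$. Both work and rely on the same estimates. For the global Lipschitz step, the paper asserts Lipschitz continuity of $\partial^\alpha F$ on $\overline{\Omega}$ and splices with the exterior via continuity at $\bd\Omega$, whereas you propose to integrate $\nabla g_\alpha$ along the $(\eps,\delta)$-paths from Assumption~\ref{Ass: Epsilon-delta assumption} (and chains outside). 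Your instinct here is sound and in fact slightly more robust than the paper's terse ``by assumption,'' but one caution on your closing remark: Lemma~\ref{Lem: Measure of Neumann boundary} (that $|\Gamma|=0$) plus a bounded classical gradient on an open co-null set does \emph{not} by itself yield a Lipschitz bound (the Cantor function is the standard obstruction), so the path/chain/boundary-crossing argument you defer is genuinely needed rather than a measure-theoretic formality; what actually closes the gap is Lipschitz continuity up to $\bd\Omega$ from each side together with continuity across it.
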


\begin{proof}
Fix an extension $F \in \C_D^\infty(\IR^d)$ of $f$. We show the claim by induction over $|\alpha|$. By Proposition~\ref{Prop: Boundedness estimate}, $Ef$ is well-defined and bounded. Now assume that $|\alpha|<k$ and $\partial^\alpha Ef$ is well-defined and bounded. It suffices to show that $\partial^\alpha Ef$ is given by a Lipschitz function. To this end, define $g_\alpha$ to equal $\partial^\alpha F$ on $\overline{\Omega}$ and $\partial^\alpha Ef$ otherwise. We proceed in two steps.

\textbf{Step 1}: $g_\alpha$ is a representative of $\partial^\alpha Ef$. That $g_\alpha$ and $\partial^\alpha Ef$ coincide on $\Omega \cup \overline{\Omega}^c$ is by definition. It follows from Remark~\ref{Rem: Definition of extension operator} that $\partial^\alpha Ef$ vanishes on $D$. The same is true for $F$ by assumption. Consequently, Lemma~\ref{Lem: Measure of Neumann boundary} reveals that $g_\alpha$ is a representative of $\partial^\alpha Ef$.

\textbf{Step 2}: $g_\alpha$ is Lipschitz continuous. By assumption, $g_\alpha$ is Lipschitz on $\overline{\Omega}$. Furthermore, $g_\alpha$ is smooth on $\overline{\Omega}^c$ and its gradient is bounded according to Proposition~\ref{Prop: Boundedness estimate}. Hence, $g_\alpha$ is Lipschitz on any line segment contained in the exterior of $\Omega$. The claim follows if we show that $g_\alpha$ is continuous on $\bd \Omega$. This is already established around $D$, so it only remains to show continuity in $x\in \Gamma$ with $\dist(x,D)>0$.

Clearly, it suffices to consider $y\in \overline{\Omega}^c$ close to $x$ to show continuity. Moreover, using the positive distance of $x$ to $D$, we may assume using Lemma~\ref{Lem: Distance reflected cubes} that $y\in Q_j$ for some cube $Q_j\in \WW_e$ and that $Q_j^\ast \subseteq \Omega$. Write $y^j$ for the center of $Q_j$. Fix some cube $R$ which contains $Q_j$ and $Q_j^\ast$ with size comparable to $Q_j^\ast$. Also, note that $Ef(z) = (E_{Q_j^\ast} f)_{Q_j^\ast}(z)$ in a neighborhood of $y^j$ by choice of the partition of unity used in the construction of $E$, and that $E_{Q_j^\ast} f = F$ on $Q_j^\ast$ since $Q_j^\ast$ is properly contained in $\Omega$. Then
\begin{align*}
	|g_\alpha(x)-g_\alpha(y)| &\leq |\partial^\alpha F(x) - \partial^\alpha F(y^j)| + |\partial^\alpha F(y^j) - \partial^\alpha (E_{Q_j^\ast} f)_{Q_j^\ast}(y^j)| + |\partial^\alpha Ef(y^j) - \partial^\alpha Ef(y)| \\
	&\leq \|\partial^\alpha F \|_{\Lip(\IR^d)} |x-y^j| + \| \partial^\alpha \bigl(F-(F)_{Q_j^\ast}\bigr) \|_{\L^\infty(R)} + \| \partial^\alpha Ef\|_{\Lip(Q_j)} \diam(Q_j).
\end{align*}
Clearly, the first and the last term tend to zero when $y$ approaches $x$. Finally, we estimate the second term using Corollary~\ref{Cor: Poincare on subcubes} to get decay of order $\diam(R) \approx \diam(Q_j)$. Hence, $g_\alpha$ is indeed continuous in $x$.
\end{proof}

We are now in the position to prove Theorem~\ref{Thm: Extension theorem for large radius}.

\begin{proof}[Proof of Theorem~\ref{Thm: Extension theorem for large radius}]

Let $f \in \C_D^\infty(\IR^d) \cap \W^{k , p} (\Omega)$ with compact support. First, we treat the trivial case $\overline{\Omega} = \IR^d$. In this situation, we extend $f$ to $D$ by zero. This is a representative according to Lemma~\ref{Lem: Measure of Neumann boundary}, it is weakly differentiable of all orders by assumption on $f$, and the extension is isometric with respect to the norm of $\W^{k,p}(\Omega)$. Hence, this case can be completed by continuity, compare with the conclusion of the general case below. 

Otherwise, derive from Proposition~\ref{Prop: Lipschitz continuous  representative} that $Ef$ has weak derivates up to order $k$ and satisfies $\dist(\supp(E f) , D) > 0$.
From the latter follows in particular that $\partial^\alpha Ef$  vanishes in $D$. Proposition~\ref{Prop: Boundedness estimate} yields the desired estimate on $\IR^d \setminus \overline{\Omega}$. Taking Lemma~\ref{Lem: Measure of Neumann boundary} into account, these estimates sum up to an estimate that holds almost everywhere on $\IR^d \setminus \Omega$, which completes the boundedness assertion.

Because we have the positive distance of the support of $E f$ to $D$, a convolution argument shows that moreover $Ef \in \W^{k,p}_D(\IR^d)$. Finally, we can extend $E$ by density to $\W^{k,p}_D(\Omega)$ using the definition of that space and the density of $\C_D^\infty(\IR^d) \cap \W^{k , p} (\Omega)$ shown in Section~\ref{Sec: Approximation smooth functions}.
\end{proof}

\section{Some additional first-order results}
\label{Sec: First order}

\subsection{Extension of Lipschitz functions vanishing on \texorpdfstring{\boldsymbol{$D$}}{D}}

\begin{definition}
\label{Def: LipDOmega}
	Let $\Omega \subseteq \IR^d$ be open and let $D \subseteq \overline{\Omega}$ be closed. The space of Lipschitz continuous functions that vanish on $D$ is given by
\begin{align*}
	\Lip_D (\Omega) \coloneqq \{u : \overline{\Omega} \to \IR : u \text{ Lipschitz and } u = 0 \text{ on } D\}
\end{align*}
with norm $$\|u\|_{\Lip_D(\Omega)} \coloneqq \max(\|u\|_{\L^{\infty} (\Omega)}, |u|_{\Lip (\Omega)}).$$ Here, $|u|_{\Lip(\Omega)}$ is defined as
\begin{align*}
 \lvert u \rvert_{\Lip (\Omega)} \coloneqq \sup_{\substack{x , y \in \Omega \\ x \neq y}} \frac{\lvert u (x) - u(y) \rvert}{\lvert x - y \rvert}.
\end{align*}
\end{definition}
The following approximation lemma for functions in $\mathrm{Lip}_D (\Omega)$ is a modified version of an argument of Stein~\cite[p.~188]{Stein} and is used as a substitute for the result from Section~\ref{Sec: Approximation smooth functions} in the case $p<\infty$.
\begin{lemma}
\label{Lem: Stein approximation}
Let $f \in \mathrm{Lip}_D (\Omega)$. Then there exists a bounded sequence $(f_n)_n \subseteq \C_D^{\infty}(\IR^d) \cap \W^{1 , \infty}(\Omega)$ that converges to $f$ in $\L^\infty(\Omega)$ and satisfies the estimate $\| f_n \|_{\Lip(\Omega)} \lesssim \| f\|_{\Lip(\Omega)}$, where the implicit constant only depends on $d$.
\end{lemma}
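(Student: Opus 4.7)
My plan is to follow a three-step scheme of Stein type: first extend $f$ to a bounded Lipschitz function on all of $\IR^d$; then multiply by a cutoff that vanishes on a small neighborhood of $D$; finally mollify to obtain smoothness while keeping the support away from $D$.

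I would start by forming the McShane extension $\tilde f(x) \coloneqq \inf_{y \in \overline{\Omega}}\{f(y) + L\,|x-y|\}$ with $L \coloneqq |f|_{\Lip(\Omega)}$, followed by a pointwise truncation between $\pm M$ where $M \coloneqq \|f\|_{\L^\infty(\Omega)}$. The result is an $L$-Lipschitz extension of $f$ to $\IR^d$ bounded by $M$ (the McShane infimum is $L$-Lipschitz as an infimum of $L$-Lipschitz functions, and the truncation is $1$-Lipschitz). The decisive observation is that, because $\tilde f$ is $L$-Lipschitz and vanishes on $D$, one has the pointwise bound
\begin{align*}
|\tilde f(x)| \leq L\,\dist(x, D) \qquad (x \in \IR^d).
\end{align*}

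Next, I fix a smooth profile $\eta \in \C^\infty([0,\infty); [0,1])$ with $\eta \equiv 0$ on $[0,1]$ and $\eta \equiv 1$ on $[2,\infty)$, set $\phi_n(x) \coloneqq \eta(n\,\dist(x,D))$, and form $g_n \coloneqq \phi_n \tilde f$. Since $\dist(\cdot,D)$ is $1$-Lipschitz, $|\nabla \phi_n| \lesssim n$ almost everywhere; but $\phi_n$ is constant outside the strip $\{\dist(\cdot, D) \leq 2/n\}$, on which $|\tilde f| \leq 2L/n$ by the key bound above. The product rule therefore yields $|\nabla g_n| \lesssim L$ almost everywhere, together with $\|g_n\|_\infty \leq M$ and $g_n \equiv 0$ on the $1/n$-neighborhood of $D$. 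Finally, I would convolve with a standard mollifier of radius $\epsilon_n < 1/(2n)$ to produce $f_n \coloneqq g_n \ast \rho_{\epsilon_n} \in \C^\infty(\IR^d)$; mollification does not increase the Lipschitz constant or the sup norm, and $\dist(\supp f_n, D) \geq 1/(2n) > 0$, so $f_n \in \C_D^\infty(\IR^d) \cap \W^{1,\infty}(\Omega)$. Uniform convergence follows from
\begin{align*}
\|f_n - f\|_{\L^\infty(\Omega)} \leq \|f_n - g_n\|_\infty + \|(1-\phi_n)\tilde f\|_\infty \lesssim \epsilon_n L + L/n \longrightarrow 0.
\end{align*}

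I expect the main obstacle to be the balance in the cut-off step: $\nabla \phi_n$ is necessarily of order $n$, which in isolation would destroy any uniform Lipschitz bound on $g_n$. It is precisely the hypothesis $f|_D = 0$, transmitted to $\tilde f$ through the McShane construction, that provides the compensating $\dist(\cdot, D)$-smallness of $\tilde f$ on the transition strip and restores the uniform bound. Once this cancellation is in place, the remaining steps are routine.
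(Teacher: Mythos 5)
Your proposal is correct and follows essentially the same three-step scheme as the paper's proof: extend $f$ to a Lipschitz function on all of $\IR^d$, multiply by a cutoff vanishing near $D$ whose gradient growth is compensated by the $\dist(\cdot,D)$-decay of the extension (itself a consequence of $f$ vanishing on $D$), and finish by mollification. The only inessential differences are the choice of extension result (the paper invokes Whitney's extension theorem, you use the McShane formula plus truncation, which in fact makes your constants dimension-free) and the precise form of the cutoff (the paper cites a specific construction with the estimate $|\varphi_n(x)-\varphi_n(y)|\lesssim x^{-1}|x-y|$, you use a scaled smooth profile $\eta(n\,\dist(\cdot,D))$); both variants yield the same bounds.
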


\begin{proof}
It suffices to show the claim for functions defined on $\IR^d$ since by Whitney's extension theorem~\cite[Thm.~3.1.1]{Fine-Properties-Of-Functions} there exists an extension $F\in \Lip_D(\IR^d)$ of $f$ that satisfies $\| F \|_{\Lip(\IR^d)} \lesssim \|f\|_{\Lip(\Omega)}$, where the implicit constant depends only on the dimension $d$. For convenience, we drop $\IR^d$ in the notation of function spaces for the rest of this proof.

Pick a family of functions $\varphi_n: [0,\infty) \to [0,1]$ satisfying for $y \geq x > 0$
\begin{align*}
	\mathrm{(i)} \quad \varphi_n=0 \text{ on } [0,1/n) \qquad  \mathrm{(ii)} \quad \varphi_n=1 \text{ on } (2/n,\infty) \qquad \mathrm{(iii)} \quad |\varphi_n(x)-\varphi_n(y)| \lesssim \frac{1}{x} |x-y|,
\end{align*}
for an explicit construction see~\cite[Thm.~3.7]{Haller-Dintelmann_Knees_Rehberg}. Put $\psi_n(x) \coloneqq \varphi_n(\dist_D(x))$. By construction, $\psi_n$ vanishes around $D$ and, by Lipschitz continuity of the distance function, (iii) yields for $x,y\in \IR^d$ with $\dist_D(x) \leq \dist_D(y)$
\begin{align}
\label{Eq: Lipschitz estimate for psi_n}
	|\psi_n(x)-\psi_n(y)| \lesssim \dist_D(x)^{-1} |x-y|.
\end{align}

It suffices to show that there is a sequence of Lipschitz functions whose supports have positive distance to $D$ which fulfill all claims but smoothness, since then we can conclude using mollification. Note that the mollified sequence converges in $\L^\infty$ because we have Lipschitz continuity.

In this light, define the sequence of functions $f_n \coloneqq \psi_n f$. Clearly, these functions are Lipschitz, and their supports stay away from $D$ because $\psi_n$ has this property. Next, we show that $f_n$ converges to $f$ in $\L^\infty$. To this end, let $x\in \IR^d$ and pick $z\in D$ satisfying $|x-z| = \dist_D(x)$. Since $f(z)=0$, we get
\begin{align*}
	|f(x)-f_n(x)| = (1-\psi_n(x)) |f(x)-f(z)| \leq \|f\|_\Lip (1-\psi_n(x)) \dist_D(x).
\end{align*}
By definition of $\psi_n$, $(1-\psi_n(x)) \dist_D(x) \leq 2/n$. Consequently, $|f(x)-f_n(x)| \to 0$ uniformly in $x$.

It remains to show that the Lipschitz seminorms of $f_n$ can be estimated against $\|f\|_\Lip$. The argument uses the same trick using an element from $D$ as we have just seen. So, let $x,y\in \IR^d \setminus D$. Assume without loss of generality that $\dist_D(x) \leq \dist_D(y)$ and let $z$ realize the distance from $x$ to $D$. Using~\eqref{Eq: Lipschitz estimate for psi_n}, we obtain
\begin{align*}
	|f_n(x)-f_n(y)| &\leq |f(x)-f(y)| \psi_n(y) + |f(x)| |\psi_n(x)-\psi_n(y)| \\ 
	&\lesssim \|f\|_\Lip |x-y| + |f(x)-f(z)| \dist_D(x)^{-1} |x-y|.
\end{align*}
The first term is fine and for the second we notice that
\begin{align*}
	|f(x)-f(z)| \leq \|f\|_\Lip |x-z| = \|f\|_\Lip \dist_D(x). &\qedhere
\end{align*}
\end{proof}

\begin{theorem}
\label{Thm: Lipschitz extension}
	Let $\Omega \subseteq \IR^d$ be an open set and $D \subseteq \overline{\Omega}$ be closed such that $\Omega$ and $D$ are subject to Assumption~\ref{Ass: Epsilon-delta assumption}. Then there exists an extension operator $E$ which is bounded from $\Lip_D(\Omega)$ to $\Lip_D(\IR^d)$.
\end{theorem}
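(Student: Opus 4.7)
The plan is to reduce to the smooth case via the Stein-type approximation from Lemma~\ref{Lem: Stein approximation}, apply the extension operator $E$ constructed in Section~\ref{Sec: The extension operator}, and pass to the uniform limit. Given $f \in \Lip_D(\Omega)$, Lemma~\ref{Lem: Stein approximation} supplies a sequence $(f_n)_n \subseteq \C_D^\infty(\IR^d) \cap \W^{1,\infty}(\Omega)$ with $f_n \to f$ in $\L^\infty(\Omega)$ and $\|f_n\|_{\Lip(\Omega)} \lesssim \|f\|_{\Lip(\Omega)}$; smoothness of $f_n$ on the open set $\Omega$ upgrades this automatically to $\|\nabla f_n\|_{\L^\infty(\Omega)} \lesssim \|f\|_{\Lip(\Omega)}$. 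Proposition~\ref{Prop: Lipschitz continuous  representative} (with $k = 1$ and $\alpha = 0$) then yields, for each $n$, a Lipschitz continuous representative $g_n$ of $Ef_n$ on $\IR^d$ whose support keeps a positive distance from $D$; in particular $g_n$ vanishes identically on an open neighborhood $U_n$ of $D$.

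The next step is to quantify $\|g_n\|_{\Lip_D(\IR^d)}$. On $\Omega$ we have $g_n = f_n$ and hence $|\nabla g_n| \lesssim \|f\|_{\Lip(\Omega)}$ almost everywhere, while Proposition~\ref{Prop: Boundedness estimate} applied with $p = \infty$, $\ell = 1$, and $|\alpha| \in \{0, 1\}$ delivers
\begin{equation*}
\|Ef_n\|_{\L^\infty(\overline{\Omega}^c)} + \|\nabla Ef_n\|_{\L^\infty(\overline{\Omega}^c)} \lesssim \|f_n\|_{\W^{1,\infty}(\Omega)} \lesssim \|f\|_{\Lip_D(\Omega)}.
\end{equation*}
The remaining region $\partial \Omega \setminus U_n$ is contained in the null set $\Gamma$ by Lemma~\ref{Lem: Measure of Neumann boundary}. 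Since Proposition~\ref{Prop: Lipschitz continuous  representative} already tells us qualitatively that $g_n$ is Lipschitz on $\IR^d$, Rademacher's theorem identifies its Lipschitz constant with the $\L^\infty(\IR^d)$-norm of its almost-everywhere gradient, and the two piecewise bounds collectively yield $\|g_n\|_{\Lip(\IR^d)} \lesssim \|f\|_{\Lip_D(\Omega)}$. The companion bound $\|g_n\|_{\L^\infty(\IR^d)} \lesssim \|f\|_{\Lip_D(\Omega)}$ is immediate from the $\ell = |\alpha| = 0$ case of Proposition~\ref{Prop: Boundedness estimate}.

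To conclude, the $\L^\infty$-boundedness of $E$ recorded in Remark~\ref{Rem: Definition of extension operator}, together with $f_n \to f$ in $\L^\infty(\Omega)$, forces $Ef_n \to Ef$ in $\L^\infty(\IR^d)$, so $(g_n)_n$ is uniformly Cauchy on $\IR^d$ and converges to a Lipschitz function $g$ retaining the bound $\|g\|_{\Lip_D(\IR^d)} \lesssim \|f\|_{\Lip_D(\Omega)}$. Since two continuous functions on $\IR^d$ agreeing almost everywhere are identical, $g$ is the unique Lipschitz representative of $Ef$; setting $\tilde{E} f \coloneqq g$ then produces a linear (by uniqueness of the Lipschitz representative combined with linearity of $E$) and bounded extension operator from $\Lip_D(\Omega)$ to $\Lip_D(\IR^d)$.

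The step expected to require the most care is packaging the piecewise almost-everywhere gradient control into a genuine global Lipschitz constant on $\IR^d$. The decisive input is the qualitative Lipschitz continuity of $g_n$ already established in Proposition~\ref{Prop: Lipschitz continuous  representative}; once this is in hand, Rademacher's theorem reduces the issue to a pointwise almost-everywhere gradient bound, which is transparent after noting that $g_n$ vanishes in a neighborhood of $D$ and that $|\Gamma| = 0$.
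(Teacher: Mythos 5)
Your proposal is correct and follows essentially the same route as the paper: approximate via Lemma~\ref{Lem: Stein approximation}, obtain Lipschitz representatives of $E f_n$ from Proposition~\ref{Prop: Lipschitz continuous  representative} with uniform Lipschitz bounds supplied by Proposition~\ref{Prop: Boundedness estimate} at $p=\infty$, and pass to the limit. You are slightly more explicit than the paper in two places — assembling the global Lipschitz constant of $g_n$ from the three pieces $\Omega$, $\overline{\Omega}^c$, and a zero-set neighbourhood of $D$ plus the null set $\Gamma$, and arguing the uniform Cauchy property of $(g_n)_n$ rather than estimating $\liminf_n \lvert E\varphi_n(x) - E\varphi_n(y)\rvert$ directly for almost every pair $x,y$ — but these are presentational variants of the same argument, not a different method.
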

\begin{proof}
Let $f\in \Lip_D(\Omega)$ and let $(\varphi_n)_n$ be the approximation in $\C_D^\infty(\IR^d)\cap \W^{1,\infty}(\Omega)$ constructed in Lemma~\ref{Lem: Stein approximation}. Write $E$ for the extension operator constructed in Section~\ref{Sec: The extension operator} for the case $k=1$. According to Proposition~\ref{Prop: Boundedness estimate} we have $\L^\infty$ bounds for $E$ on $\varphi_n$. In particular, this shows the $\L^\infty(\IR^d)$ bound for $E$ on $f$. Moreover, this permits us to calculate for almost every $x , y \in \IR^d$ that
\begin{align*}
 |E f (x) - E f (y)| = \lim_{n \to \infty} |E \varphi_n (x) - E \varphi_n (y)|.
\end{align*}
By Proposition~\ref{Prop: Lipschitz continuous  representative}, $E \varphi_n$ is Lipschitz and hence 
\begin{align*}
 \lim_{n \to \infty} |E \varphi_n (x) - E \varphi_n (y)| &\leq \liminf_{n \to \infty} \|\nabla E \varphi_n\|_{\L^{\infty} (\IR^d)} |x-y|.
\end{align*}
Proceeding by Proposition~\ref{Prop: Boundedness estimate} and Lemma~\ref{Lem: Stein approximation}, we obtain
\begin{align*}
 \liminf_{n \to \infty} \|\nabla E \varphi_n\|_{\L^{\infty} (\IR^d)} \lesssim \liminf_{n \to \infty} \|\varphi_n\|_{\W^{1 , \infty} (\Omega)} \lesssim \| f \|_{\Lip(\Omega)}.
\end{align*}
So, $Ef$ satisfies a Lipschitz estimate against $\|f\|_{\Lip(\Omega)}$ almost everywhere. Hence, $Ef$ possesses a representative which is Lipschitz on $\IR^d$ and satisfies the boundedness estimate.
\end{proof}

\subsection{An extension using reference geometries}

As a corollary, we obtain the existence of an extension operator on even more general but very inexplicit geometries.

\begin{corollary}
\label{Cor: Extension operator}
Let $\Omega \subseteq \IR^d$ be open, $D \subseteq \partial \Omega$ be closed, and define $\Gamma \coloneqq \partial \Omega \setminus D$. Further, assume that there exists a proper open superset $\Omega_{\Gamma} \supset \Omega$ such that $\Gamma$ is contained in $\partial \Omega_{\Gamma}$ and is relatively open with respect to $\partial \Omega_{\Gamma}$. Finally, assume that $\Omega_{\Gamma}$ and $D^{\prime} \coloneqq \partial \Omega_{\Gamma} \setminus \Gamma$ satisfy Assumption~\ref{Ass: Epsilon-delta assumption}. Then there exists an extension operator $E$ that restricts to a bounded operator from $\L^p(\Omega)$ to $\L^p(\IR^d)$ as well as from $\W^{1,p}_D(\Omega)$ to $\W^{1,p}_D(\IR^d)$ in the case $1 \leq p < \infty$, and which restricts to a bounded operator from $\L^\infty(\Omega)$ to $\L^\infty(\IR^d)$ as well as from $\Lip_D(\Omega)$ to $\Lip_D(\IR^d)$.
The operator norms of $E$ only depend on $d$, $p$, $K$, $\eps$, $\delta$, and $\lambda$. Here, the quantities $K$, $\eps$, $\delta$, and $\lambda$ are measured with respect to $\Omega_{\Gamma}$.
\end{corollary}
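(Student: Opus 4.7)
The plan is to reduce to Theorem~\ref{Thm: Extension theorem for large radius} (with $k=1$) and Theorem~\ref{Thm: Lipschitz extension} applied to the pair $(\Omega_\Gamma , D')$: write $\tilde E$ for the extension operator these results provide on $\Omega_\Gamma$, and set $E \coloneqq \tilde E \circ S$, where $S$ denotes the zero-extension sending a function on $\Omega$ to its extension by zero on $\Omega_\Gamma \setminus \Omega$. It then remains to check that $S$ maps each relevant space on $\Omega$ boundedly into the corresponding space on $\Omega_\Gamma$ with Dirichlet part $D'$, and that $Ef$ vanishes on the whole of $D$, not only on $D'$.

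Boundedness of $S$ on $\L^p$ for $1 \leq p \leq \infty$ is trivial. For the Sobolev case, I would use Proposition~\ref{Prop: Approximation} to approximate $f \in \W^{1,p}_D(\Omega)$ by smooth compactly supported $g$ on $\IR^d$ with $\dist(\supp g , D) > 0$. Such a $g$ vanishes in an ambient neighborhood of $D$, which makes $Sg$ smooth across $D \cap \Omega_\Gamma$ and preserves the $\W^{1,p}$-norm. Moreover $\supp Sg \subseteq \supp g \cap \cl{\Omega}$, and combining the elementary inclusion $\cl{\Omega} \cap \bd \Omega_\Gamma \subseteq \bd \Omega$ with $\supp g \cap D = \emptyset$ and $D' \cap \Gamma = \emptyset$ gives $\supp Sg \cap D' = \emptyset$; by compactness this distance is positive, placing $Sg$ in $\C_{D'}^\infty(\Omega_\Gamma) \cap \W^{1,p}(\Omega_\Gamma)$, and density closes this case. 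For $f \in \Lip_D(\Omega)$, continuous extension of $Sf$ by zero to $D'$ is a short case analysis, and the global Lipschitz estimate reduces to controlling $|f(x)|$ for $x \in \Omega$ and $y \in \Omega_\Gamma \setminus \Omega$: for $|x-y| \geq \delta$ I use the $\L^\infty$-bound, and for $|x-y| < \delta$ the curve $\gamma$ from Assumption~\ref{Ass: Epsilon-delta assumption} applied to $(\Omega_\Gamma , D')$ avoids $\cl{\Gamma}$, so its first exit point $z$ from $\Omega$ must lie in $\bd \Omega \setminus \Gamma = D$, whence
\[
  |Sf(x)| = |f(x) - f(z)| \leq \|f\|_{\Lip} |x-z| \leq \eps^{-1}\|f\|_{\Lip} |x-y|.
\]
This is the only step where the geometry of $\Omega_\Gamma$ is genuinely used and is, I expect, the main obstacle.

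With $S$ bounded in every scale, $E = \tilde E \circ S$ inherits boundedness from the theorems applied to $(\Omega_\Gamma , D')$. To finish, I verify vanishing of $Ef$ on all of $D$. Since $\tilde E$ is an extension, $Ef = Sf$ on $\Omega_\Gamma$. The set $D \setminus D'$ lies in $\Omega_\Gamma$ because $D \subseteq \cl{\Omega_\Gamma}$ and $D \cap \bd \Omega_\Gamma = D \cap D'$ (using $D \cap \Gamma = \emptyset$), and since $D \cap \Omega = \emptyset$ it is in fact contained in $\Omega_\Gamma \setminus \Omega$, where $Sf$ vanishes by construction; vanishing on $D \cap D'$ is part of the output of $\tilde E$. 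To upgrade $Ef \in \W^{1,p}_{D'}(\IR^d)$ to $Ef \in \W^{1,p}_D(\IR^d)$, I would repeat the support argument at the end of the proof of Theorem~\ref{Thm: Extension theorem for large radius}: for $f$ in the dense smooth class of Proposition~\ref{Prop: Approximation}, the reflected Whitney cubes involved in the construction of $\tilde E$ stay close to $D'$, so $\tilde E Sf$ already has support at positive distance from $D$, and density yields the claim.
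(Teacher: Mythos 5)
Your overall strategy — factor the operator as $E = \tilde E \circ S$ with $S$ the zero-extension from $\Omega$ to $\Omega_\Gamma$, check boundedness of $S$ into the $D'$-spaces, and then compose with the extension operator for $(\Omega_\Gamma , D')$ — is the same as the paper's, and your Lipschitz argument (an admissible path avoiding $\cl\Gamma$ must cross $\bd\Omega$ in $D$, giving the $\eps^{-1}$-estimate) matches the paper's Step~2 word for word in spirit. Your handling of the distinction between $D$ and $D'$ at the end is actually spelled out more carefully than in the paper, where this is compressed into a single \enquote{by composition}.

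However, there is a genuine gap in the Sobolev step. You invoke Proposition~\ref{Prop: Approximation} to approximate $f \in \W^{1,p}_D(\Omega)$ by compactly supported functions in $\C_D^\infty(\IR^d)$, but that proposition (indeed all of Section~\ref{Sec: Approximation smooth functions}) is proved under the standing hypothesis that the \emph{pair} $(\Omega , D)$ satisfies Assumption~\ref{Ass: Epsilon-delta assumption}. Corollary~\ref{Cor: Extension operator} only assumes this for the reference pair $(\Omega_\Gamma , D')$; the condition~\eqref{Eq: Quasihyperbolic distance condition} for $(\Omega , D)$, which requires $\qhdist_\Xi(z , \Omega) \leq K$ rather than $\qhdist_\Xi(z , \Omega_\Gamma) \leq K$, is a strictly stronger statement and does not follow, and the same goes for~\eqref{Eq: Diameter Condition}. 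Since the very purpose of the corollary is to treat domains $\Omega$ that need \emph{not} satisfy Assumption~\ref{Ass: Epsilon-delta assumption} (cf.\ Example~\ref{Ex: Blink of an eye}), this appeal is circular. The repair is cheap and is what the paper does: use the \emph{definitional} density of $\C_D^\infty(\Omega) \cap \W^{1,p}(\Omega)$ in $\W^{1,p}_D(\Omega)$, which costs no geometric hypothesis, note that for such $f_n$ the zero-extension $S f_n$ is weakly differentiable on $\Omega_\Gamma$ with $\nabla S f_n = S(\nabla f_n)$ (because $f_n$ vanishes in a neighborhood of $D$, and $\bd\Omega \cap \Omega_\Gamma \subseteq D$), and then pass to the limit by weak compactness to identify $S f$ in $\W^{1,p}_{D'}(\Omega_\Gamma)$. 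This also removes the need for compact support, which you only used to upgrade $\supp(Sg) \cap D' = \emptyset$ to a positive distance.
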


\begin{proof}
Throughout this proof, let $\eps , \delta > 0$ be the parameters from Assumption~\ref{Ass: Epsilon-delta assumption} with respect to $\Omega_{\Gamma}$ and $D^{\prime}$.

Let $E_0$ be the operator that extends functions by zero from $\Omega$ to $\Omega_\Gamma$, and let $E_{\Gamma}$ denote the extension operator constructed for $\Omega_{\Gamma}$ in Theorem~\ref{Thm: Extension theorem for large radius} for $k=1$. We claim that $E\coloneqq E_\Gamma \circ E_0$ is the desired extension operator. We proceed in several steps.

\textbf{Step 1}: $E$ is $\L^p$ bounded for $1\leq p \leq \infty$. The respective estimate for $E_0$ is clear by construction. The same is true for $E_\Gamma$ by Theorem~\ref{Thm: Extension theorem for large radius} in the case $p<\infty$. Owing to Remark~\ref{Rem: Definition of extension operator}, the $\L^p$-estimates for $E_\Gamma$ also hold in the case $p=\infty$. Hence, the claim follows by composition.

\textbf{Step 2}: $E_0$ maps $\Lip_D(\Omega)$ boundedly into $\Lip_{D'}(\Omega_\Gamma)$. Let $f \in \mathrm{Lip}_D (\Omega)$.

\emph{Claim 1}: $E_0 f$ is Lipschitz continuous on $\Omega_{\Gamma}$. It suffices to consider $x \in \Omega$ and $y \in \Omega_{\Gamma} \setminus \Omega$ with $|x-y| < \delta$. Let $\gamma$ be the path connecting $x$ with $y$ subject to Assumption~\ref{Ass: Epsilon-delta assumption}. By virtue of~\eqref{Eq: Carrot condition} and the intermediate value theorem there exists $z \in \gamma \cap D$. Now, by Lipschitz continuity of $f$, the fact that $f$ vanishes on $D$, and by~\eqref{Eq: Length condition} one estimates
\begin{align}
\label{Eq: E0 Lipschitz estimate}
 \lvert E_0 f (x) - E_0 f (y) \rvert = \lvert f (x) - f(z) \rvert \leq \lvert f \rvert_{\mathrm{Lip} (\Omega)} \lvert x - z \rvert \leq \frac{\lvert f \rvert_{\Lip (\Omega)}}{\eps} \lvert x - y \rvert.
\end{align}

\emph{Claim 2}: $E_0 f$ vanishes on $D'$. Let $x\in D'$. If $x\in \bd \Omega$, then $x \in D$ since $x\not\in \Gamma$ by definition of $D'$. Hence, $E_0 f(x)=f(x)=0$ by choice of $f$. Otherwise, there is a ball $B$ around $x$ that avoids $\Omega$. Choose a sequence $x_n$ in $B\cap \Omega_\Gamma$ that approaches $x$, by construction of $E_0 f$ and continuity shown in Step 1 we conclude that $E_0 f$ vanishes in $x$.

\emph{Claim 3}: $E_0$ is bounded. The crucial estimate was shown in~\eqref{Eq: E0 Lipschitz estimate}.

\textbf{Step 3}: $E_0$ maps $\W^{1,p}_D(\Omega)$ boundedly into $\W^{1,p}_{D'}(\Omega_\Gamma)$ for $1\leq p < \infty$. Let $f \in \W^{1 , p}_D (\Omega)$ and pick an approximating sequence $(f_n)_n \subseteq \C_D^{\infty}(\Omega)\cap \W^{1,p}(\Omega)$, which exists by definition of the space. Since $f$ vanishes around $D$, $E_0 f_n$ is weakly differentiable on $\Omega_\Gamma$ with $\nabla E_0 f_n = E_0 \nabla f_n$ almost everywhere. Therefore,
\begin{align*}
 \| E_0 f_n \|_{\W^{1 , p} (\Omega_{\Gamma})} = \| f_n \|_{\W^{1 , p} (\Omega)},
\end{align*}
which yields that there exists $g \in \W^{1 , p} (\Omega_{\Gamma})$ such that a subsequence $E_0 f_{n_j}$ converges weakly to $g$ in $\W^{1 , p} (\Omega_{\Gamma})$. By the $\L^p$-continuity of $E_0$ we conclude that $E_0 f$ coincides with $g$. Finally, $E_0 f$ belongs to $\W^{1,p}_{D'}(\Omega_\Gamma)$ by construction.
%But since $E_0 f_n$ are continuous functions which vanish identically on $D'$, this follows by an easy approximation argument using the fact that $\W^{1,p}(\Omega_\Gamma)$ is closed under truncations, see~\cite[Sec.~9.2]{Adams-Hedberg}.

\textbf{Step 4}: $E$ is bounded in $\W^{1,p}_D(\Omega)$ for $1\leq p < \infty$ and $\Lip_D(\Omega)$. This follows by composition using Steps~2 or~3 together with Theorem~\ref{Thm: Lipschitz extension} or Theorem~\ref{Thm: Extension theorem for large radius}.
\end{proof}

\begin{remark}
\begin{enumerate}
	\item Notice that Assumption~\ref{Ass: Epsilon-delta assumption} is an explicit assumption that uses only information on points in $\Omega$. To the contrary of that, the geometry described in Corollary~\ref{Cor: Extension operator} has an inexplicit nature, as it is a priori not clear how to construct such a set $\Omega_{\Gamma}$. However, there are important examples where this condition can be checked in the \enquote{blink of an eye}, see Example~\ref{Ex: Blink of an eye} below.
	\item We suggest that a similar result holds in the higher-order case using an induction similar to that in Proposition~\ref{Prop: Lipschitz continuous  representative}. Moreover, a more involved approximation procedure than our truncation method employed at the end of Step~3 would be needed.
\end{enumerate}
\end{remark}

\begin{example}[Exterior boundary cusps at zero or at infinity]
\label{Ex: Blink of an eye}
Let $\Omega$ be a domain that has an exterior boundary cusp either at zero or at infinity, as it is informally depicted in Figure~\ref{Fig: Three examples}. In this case, $\Omega$ is an $\W^{1 , p}_D$-extension domain as a simple reflection argument shows. However, it is not so clear if it satisfies Assumption~\ref{Ass: Epsilon-delta assumption}. Nevertheless, it is simple to verify the validity of the geometric setting stated in Corollary~\ref{Cor: Extension operator}. Indeed, simply take as $\Omega_{\Gamma}$ the lower half-space and notice that the parameter $K$ in Assumption~\ref{Ass: Epsilon-delta assumption} can be set to zero because it is already an $(\eps , \delta)$-domain. \par
We can even go further and extend the geometric setting from Example~\ref{Ex: Sector} to the following one (see Figure~\ref{Fig: Three examples}). \par
Let $\theta \in (0 , \pi)$ and let $\S_{\theta} \subseteq \IR^2$ denote the open sector symmetric about the positive $x$-axis with opening angle $2 \theta$. Define $T_{\theta} \coloneqq \S_{\theta} \cap \{ (x , y) \in \IR^2 : y > 0 \}$. Let $\Omega \subseteq \IR^2$ be a domain satisfying $T_{\theta} \subseteq \Omega^c$ and define
\begin{align*}
 \Gamma \coloneqq (0 , \infty) \times \{ 0 \} \qquad \text{and} \qquad D \coloneqq \partial \Omega \setminus \Gamma.
\end{align*}
Assume further, that $\Omega$ is such that $D$ is closed (this avoids that $D$ touches $\Gamma$ from below). To apply Corollary~\ref{Cor: Extension operator} take $\Omega_{\Gamma} \coloneqq (\overline{T_{\theta}})^c$. As this is an $(\eps , \delta)$-domain, it satisfies Assumption~\ref{Ass: Epsilon-delta assumption} with $K = 0$.
\end{example}

\begin{figure}[!ht]
\centering
\begin{minipage}{.28\textwidth}
  \centering
  \includegraphics[width=0.8\textwidth]{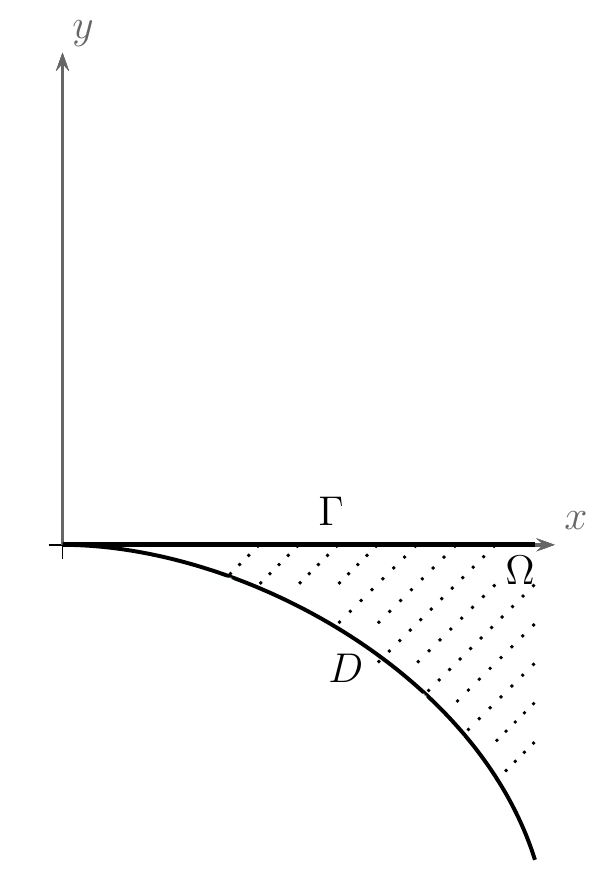}
\end{minipage}
\hfill
\begin{minipage}{.28\textwidth}
  \centering
  \includegraphics[width=0.85\textwidth]{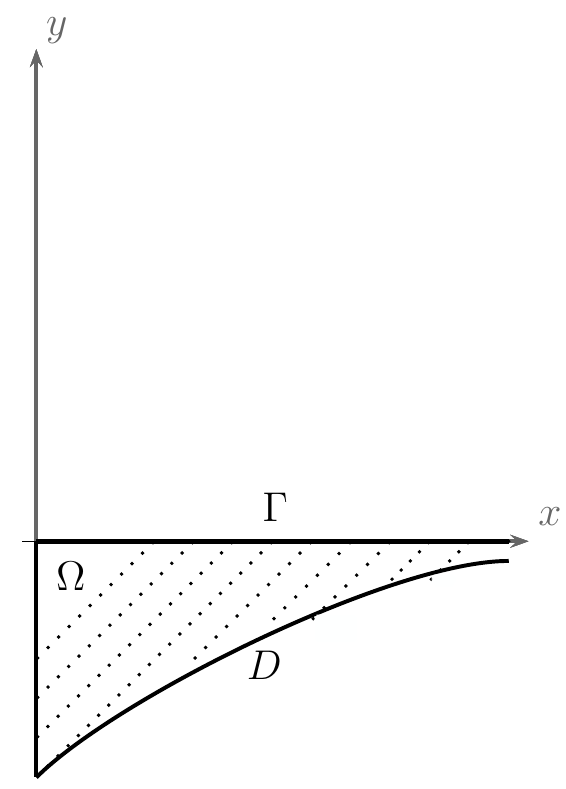}
\end{minipage}
\hfill
\begin{minipage}{.4\textwidth}
  \centering
  \includegraphics[width=1.0\textwidth]{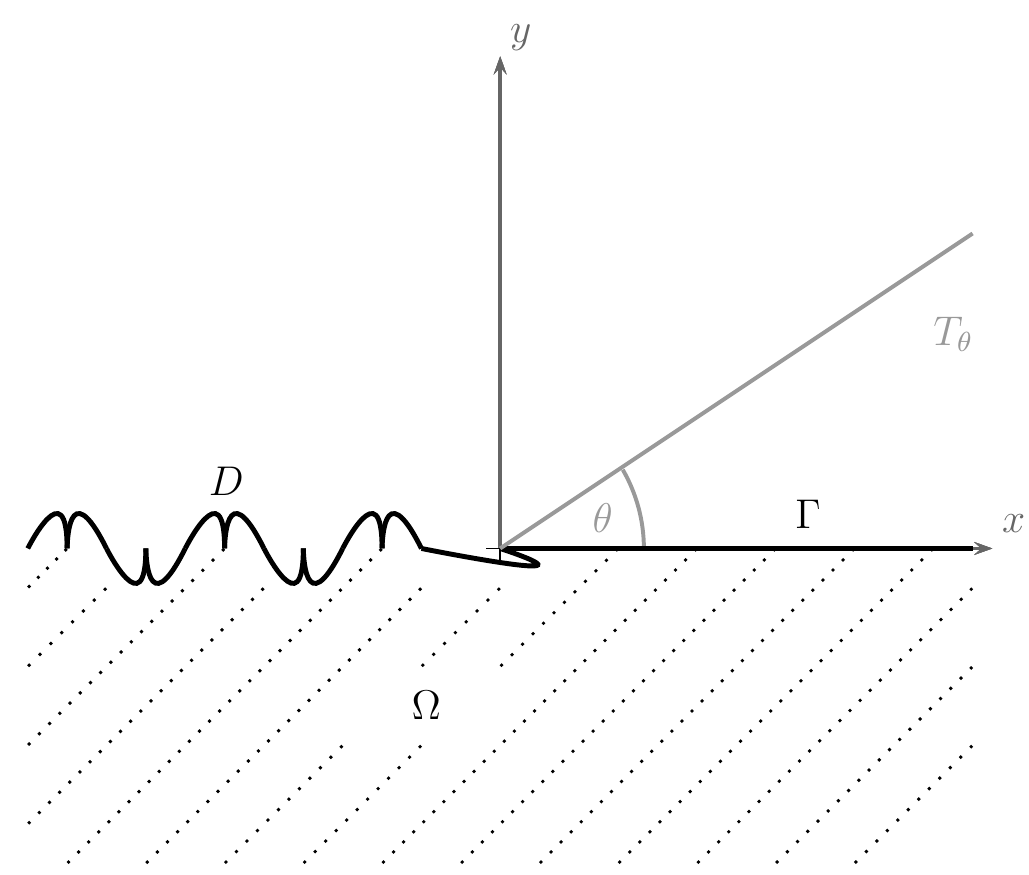}
\end{minipage}
  \caption{Situations in Example~\ref{Ex: Blink of an eye}.}
\label{Fig: Three examples}
\end{figure}

\section{Homogeneous estimates}
\label{Sec: Homogeneous estimates}

\noindent We provide further estimates for the extension operator from Theorem~\ref{Thm: Extension theorem for large radius} which concern homogeneous estimates and locality (see Definition~\ref{Def: locality} for a proper definition). These results build on the observations made in Remark~\ref{Rem: Homogeneous estimates}.
\begin{definition}
\label{Def: locality}
	An extension operator $E$ on $\W^{k,p}_D(\Omega)$ is called \emph{local} if there exist constants $r_0, \kappa>0$ such that $$\| \nabla^\ell E f\|_{\L^p(\B(x,r))} \lesssim \| f \|_{\W^{k,p}(\Omega \cap \B(x, \kappa r))}$$ for all $x\in \bd \Omega$, $r\in (0, r_0)$, and $\ell \leq k$. Moreover, call $E$ \emph{homogeneous} if one can replace the right-hand side of that estimate by $\| \nabla^\ell f \|_{\L^p(\Omega \cap \B(x, \kappa r))}$.
\end{definition}
To verify that $E$ is local, we chose $F=\B(x,r)$ in Remark~\ref{Rem: Homogeneous estimates} and let $Q_j\in \WW_e$ with $Q_j \cap \B(x,r) \neq \emptyset$. On using~\eqref{Eq: Comparison of cube and reflected cube},~\eqref{Eq: Estimate distance of reflected cube}, the bound on the chain length from Lemmas~\ref{Lem: Building chains} as well as the properties of Whitney cubes, we see that $F(Q_j)$ is contained in the ball $\B(x,\kappa r)$ for some $\kappa$ depending only on $\eps$, $d$, $K$, and $\lambda$ (as before, an analogous version for $F_P(Q)$ holds on using Lemma~\ref{Lem: Building Poincare chains} instead of Lemma~\ref{Lem: Building chains} and a similar reasoning). So, with $G=\B(x, \kappa r)$ we derive locality from Remark~\ref{Rem: Homogeneous estimates} with $r_0=\infty$. If we restrict to $r_0=A\delta$, the same remark also yields that $E$ is homogeneous. Note that in the case of $\delta=\infty$ this restriction is void. We summarize this result in the following theorem.
\begin{theorem}
\label{Thm: Local homogeneous estimates}
	Let $\Omega \subseteq \IR^d$ be open and $D \subseteq \partial \Omega$ be closed such that $\Omega$ and $D$ are subject to Assumption~\ref{Ass: Epsilon-delta assumption}, and fix some integer $k \geq 0$. Then there exist $A,\kappa>0$ and an extension operator $E$ such that for all $1 \leq p < \infty$ one has that $E$ restricts to a bounded mapping from $\W^{k,p}_D(\Omega)$ to $\W^{k,p}_D(\IR^d)$ and which is moreover homogeneous and local, that is, the estimate $$\|\nabla^\ell Ef \|_{\L^p(\B(x,r))} \lesssim \| \nabla^\ell f \|_{\L^p(\B(x,\kappa r) \cap \Omega)}$$ holds for $f\in \W^{k,p}_D(\Omega)$, $\ell \leq k$, $x\in \bd \Omega$, and $r\in (0, A\delta)$.
The implicit constant in that estimate depends on $d$, $p$, $K$, $k$, $\eps$, $\delta$, and $\lambda$.
\end{theorem}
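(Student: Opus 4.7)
My plan is to derive Theorem~\ref{Thm: Local homogeneous estimates} directly from Remark~\ref{Rem: Homogeneous estimates} by producing, for each boundary point $x \in \bd\Omega$ and each admissible radius $r$, a suitable pair of sets $F, G$. Concretely, I fix $x \in \bd\Omega$ and $r \in (0, A\delta)$, set $F \coloneqq \B(x,r)$ and aim to show that there exists $\kappa > 0$, depending only on $d$, $\eps$, $K$, and $\lambda$, such that
\begin{equation*}
\sum_{\substack{Q_j \in \WW_e \\ Q_j \cap F \neq \emptyset}} \chi_{F(Q_j)}(y) \lesssim \chi_{\B(x,\kappa r)}(y) \qquad (y \in \IR^d),
\end{equation*}
and the analogous bound with $F_P(Q)$ in place of $F(Q_j)$. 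Once this overlap bound on $G \coloneqq \B(x,\kappa r)$ is established, the conclusion follows immediately from the remark together with the density statement of Proposition~\ref{Prop: Approximation}.

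To verify the overlap inclusion, I would first observe that any Whitney cube $Q_j \in \WW_e$ meeting $\B(x,r)$ satisfies $\diam(Q_j) \lesssim r$. Indeed, since $x \in \bd\Omega$ and $Q_j \in \WW(\overline{\Omega})$, property~\eqref{Enum: Distance property} gives $\diam(Q_j) \leq \dist(Q_j, \overline{\Omega}) \leq \dist(Q_j,x) \leq r$. Lemma~\ref{Lem: Reflection of cubes} then upgrades this to $\diam(Q_j^*) \lesssim r$ and $\dist(Q_j^*, Q_j) \lesssim r$, and Lemma~\ref{Lem: Building chains} yields uniformly bounded chain lengths together with $\diam(S) \lesssim r$ for every cube $S$ occurring in any $F_{j,k}$ attached to such a $Q_j$. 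Consequently $F(Q_j) \subseteq \B(x,\kappa r)$ for some $\kappa$ depending only on the structural constants. The same reasoning applies to cubes $Q \in \WW(\overline{\Omega}) \setminus \WW_e$ of diameter at most $A\delta$ that meet $F$, now with Lemma~\ref{Lem: Building Poincare chains} in place of Lemma~\ref{Lem: Building chains}, so the corresponding $F_P(Q)$ also lie inside $\B(x,\kappa r)$ after possibly enlarging $\kappa$. Combining these with~\eqref{Eq: Nasty chain estimate} and Lemma~\ref{Lem: Number of preimages to reflected cube} delivers the required pointwise overlap control.

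With the overlap bound in hand, I invoke the second half of Remark~\ref{Rem: Homogeneous estimates}: since $F = \B(x,r) \subseteq N_r(\Omega) \subseteq N_{A\delta}(\Omega)$, and writing $|\alpha| = \ell \leq k$, the remark yields
\begin{equation*}
\| \partial^\alpha E f \|_{\L^p(\B(x,r) \setminus \overline{\Omega})} \lesssim \| \nabla^\ell f \|_{\L^p(\B(x,\kappa r) \cap \Omega)}
\end{equation*}
for every $f \in \C_D^\infty(\Omega) \cap \W^{k,p}(\Omega)$. On $\B(x,r) \cap \Omega$ we have $Ef = f$, so the matching bound is trivial there; on $\B(x,r) \cap \bd\Omega$ the contribution is null because $|\Gamma| = 0$ by Lemma~\ref{Lem: Measure of Neumann boundary} and $Ef$ vanishes on $D$. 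Summing over the finitely many multi-indices of length $\ell$ produces the claimed homogeneous local estimate on smooth test functions.

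Finally, the estimate extends to arbitrary $f \in \W^{k,p}_D(\Omega)$ by Proposition~\ref{Prop: Approximation}, since both sides are continuous in the $\W^{k,p}(\Omega)$-topology and $E$ is already known to be $\W^{k,p}_D(\Omega) \to \W^{k,p}_D(\IR^d)$ bounded by Theorem~\ref{Thm: Extension theorem for large radius}. I expect the main obstacle to be the verification of the geometric inclusion $F(Q_j) \cup F_P(Q) \subseteq \B(x,\kappa r)$ with $\kappa$ depending only on the structural constants; once that containment is in place, the remainder is a bookkeeping assembly of previously proved lemmas.
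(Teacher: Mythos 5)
Your proposal is correct and follows essentially the same route as the paper: choose $F = \B(x,r)$, verify the geometric containment $F(Q_j), F_P(Q) \subseteq \B(x,\kappa r)$ via the diameter and distance bounds from Lemma~\ref{Lem: Reflection of cubes}, Lemma~\ref{Lem: Building chains}, and Lemma~\ref{Lem: Building Poincare chains} together with the Whitney property~\eqref{Enum: Distance property}, then invoke Remark~\ref{Rem: Homogeneous estimates} with $G = \B(x,\kappa r)$ (using $\B(x,r) \subseteq N_{A\delta}(\Omega)$ to kill the inhomogeneous term), and conclude by density. You fill in the short containment argument that the paper merely sketches, but there is no substantive difference in method.
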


%%%%%%%%%%%%%%%%%%%%%%%%%%%%%%%%%%%%%%%%%%%%%%%%%%%%%%%%%%%%%%%%%%%%%%%%%%%%%%%%%%%%%%%%%%%%%%%%%%%%%%%%%%%%%%%%%%%%%%%%%%%%%%%%%%%%%%%%%%%%%%%%%%%%%%%%%%%%%%%%%%%%
\begin{bibdiv}
\begin{biblist}

\bibitem{Adams-Hedberg}
D.~R. Adams and L.~I. Hedberg.
\newblock Function {S}paces and {P}otential {T}heory.
  Grundlehren der Mathematischen Wissenschaften,  vol.~314,
\newblock Springer, Berlin, 1996.

\bibitem{Auscher_Badr_Haller-Dintelmann_Rehberg}
P.~Auscher, N.~Badr, R.~Haller-Dintelmann, and J.~Rehberg.
\newblock {\em The square root problem for second-order, divergence form operators with mixed boundary conditions on $L^p$\/}.
\newblock J.\@ Evol.\@ Equ.~\textbf{15} (2015), no.~1, 165--208.

\bibitem{BE}
S.~Bechtel and M.~Egert.
\newblock {\em Interpolation theory for Sobolev functions with partially vanishing trace on irregular open sets\/}.
\newblock J. Fourier Anal. Appl. ~\textbf{25} (2019), no.~5, 2733--2781.

\bibitem{Brewster_M_M_M}
K.~Brewster, D.~Mitrea, I.~Mitrea, and M.~Mitrea.
\newblock {\em Extending Sobolev functions with partially vanishing traces from locally $(\eps , \delta)$-domains and applications to mixed boundary problems\/}.
\newblock J.\@ Funct.\@ Anal.~\textbf{266} (2014), no.~7, 4314--4421.

\bibitem{Calderon}
A.-P.~Calder\'on.
\newblock {\em Lebesgue spaces of differentiable functions and distributions\/}.
\newblock in: Proc.\@ Sympos.\@ Pure Math.\@ (1961), vol.~IV, 33--49.

\bibitem{Chua}
S.-K.~Chua.
\newblock {\em Extension theorems on weighted Sobolev spaces\/}.
\newblock Indiana Univ.\@ Math.\@ J.~\textbf{41} (1992), no.~4, 1027--1076.

\bibitem{Egert_Dissertation}
M.~Egert.
\newblock On Kato's conjecture and mixed boundary conditions.
\newblock Sierke Verlag, G\"ottingen, 2016.

\bibitem{Egert}
M.~Egert.
\newblock {\em $L^p$-estimates for the square root of elliptic systems with mixed boundary conditions\/}.
\newblock J.\@ Differential Equations~\textbf{265} (2018), no.~4, 1279--1323.

\bibitem{Egert_Haller-Dintelmann_Rehberg}
M.~Egert, R.~Haller-Dintelmann, and J.~Rehberg.
\newblock {\em Hardy's inequality for functions vanishing on a part of the boundary\/}.
Potential Anal.~\textbf{43} (2015), no.~1, 49--78.

\bibitem{Egert_Haller-Dintelmann_Tolksdorf}
M.~Egert, R.~Haller-Dintelmann, and P.~Tolksdorf.
\newblock {\em The Kato square root problem for mixed boundary conditions\/}.
\newblock J.\@ Funct.\@ Anal.~\textbf{267} (2014), no.~5, 1419--1461.

\bibitem{terElst_Haller-Dintelmann_Rehberg_Tolksdorf}
A.~F.~M.~ter Elst, R.~Haller-Dintelmann, J.~Rehberg, and P.~Tolksdorf.
\newblock {\em On the $\L^p$-theory for second-order elliptic operators in divergence form with complex coefficients\/}.
Available at \url{arXiv:1903.06692}.

\bibitem{Egert_Tolksdorf}
M.~Egert and P.~Tolksdorf.
\newblock {\em Characterizations of Sobolev functions that vanish on a part of the boundary\/}.
\newblock Discrete Contin.\@ Dyn.\@ Syst.\@ Ser.\@ S~\textbf{10} (2017), no.~4, 729--743.

\bibitem{terElst_Rehberg}
A.~F.~M.~ter Elst and J.~Rehberg.
\newblock {\em H\"older estimates for second-order operators on domains with rough boundary\/}.
\newblock Adv.\@ Differential Equations~\textbf{20} (2015), no.~3-4, 299--360.

\bibitem{Fine-Properties-Of-Functions}
L.~C. Evans and R.~F. Gariepy.
\newblock Measure {T}heory and {F}ine {P}roperties of {F}unctions. Studies in Advanced Mathematics,
\newblock CRC Press, Boca Raton FL, 1992.

\bibitem{Gehring_Osgood}
F.~W.~Gehring and B.~G.~Osgood.
\newblock {\em Uniform domains and the quasi-hyperbolic metric\/}.
\newblock J.\@ Analyse Math.~\textbf{36} (1979), 50--74.

\bibitem{Gehring_Palka}
F.~W.~Gehring and B.~P.~Palka.
\newblock {\em Quasiconformally homogeneous domains\/}.
\newblock J.\@ Analyse Math.~\textbf{30}, 172--199.

\bibitem{Gilbarg_Trudinger}
D.~Gilbarg and N.~S.~Trudinger.
\newblock Elliptic partial differential equations of second order.
\newblock Springer, Berlin, 2001.

\bibitem{Haller-Dintelmann_Knees_Rehberg}
R.~Haller-Dintelmann, A.~Jonsson, D.~Knees, and J.~Rehberg.
\newblock {\em Elliptic and parabolic regularity for second order divergence operators with mixed boundary conditions\/}.
\newblock Math.\@ Methods Appl.\@ Sci.~\textbf{39} (2016), no.~17, 5007--5026.

\bibitem{Herron_Koskela}
D.~A.~Herron and P.~Koskela.
\newblock {\em Conformal capacity and the quasihyperbolic metric\/}.
\newblock Indiana Univ.\@ Math.\@ J.~\textbf{45} (1996), no.~2, 333--359.

\bibitem{Jones}
P.~W.~Jones.
\newblock {\em Quasiconformal mappings and extendability of functions in Sobolev spaces\/}.
\newblock Acta Math.~\textbf{147} (1981), no.~1-2, 71--88.

\bibitem{Rogers}
L.~G.~Rogers.
\newblock {\em Degree-independent Sobolev extension on locally uniform domains\/}.
\newblock J.\@ Funct.\@ Anal.~\textbf{235} (2006), no.~2, 619--665.

\bibitem{Stein}
E.~M.~Stein.
\newblock Singular integrals and differentiability properties of functions.
\newblock Princeton University Press, Princeton, 1970.

\bibitem{Taylor_Kim_Brown}
J.~Taylor, S.~Kim, and R.~M.~Brown.
\newblock {\em Heat kernel for the elliptic system of linear elasticity with boundary conditions\/}.
\newblock J.\@ Differential Equations~\textbf{257} (2014), no.~7, 2485--2519.

\bibitem{Tolksdorf}
P.~Tolksdorf.
\newblock {\em $\mathcal{R}$-sectoriality of higher-order elliptic systems on general bounded domains\/}.
\newblock J.\@ Evol.\@ Equ.~\textbf{18} (2018), no.~2, 323--349.

\end{biblist}
\end{bibdiv}

\end{document}